\newtheorem{theorem}{Theorem}[section]
\newtheorem{lemma}[theorem]{Lemma}
\newtheorem{proposition}[theorem]{Proposition}
\theoremstyle{remark}
\newtheorem{remark}[theorem]{Remark}
\theoremstyle{definition}
\newtheorem{notation}[theorem]{Notation}
\newtheorem{example}[theorem]{Example}
\numberwithin{equation}{section}
\newcounter{counter_a}
\newcounter{num}
\renewcommand{\thenum}{\roman{num}}
\newcommand\la{\lambda}
\newcommand\de{\delta}
\newcommand\cA{{\mathcal A}}
\newcommand\cD{{\mathcal D}}
\newcommand\cH{{\mathcal H}}
\newcommand\tpi{{\pi}}
\newcommand\trho{{\rho}}
\newcommand\tka{{\kappa}}
\newcommand\dinf{d_\infty}
\newcommand\linf{_\infty}
\newcommand\blinf{_{\!\!\infty}}
\newcommand\Dis{{\mathfrak D}}
\renewcommand{\matrix}[2]{\left( \!\! \begin{array}{#1} #2 \end{array} \!\! \right)}
\newcommand{\ov}[1]{\overline{#1}}
\DeclareMathOperator\Real{Re}
\DeclareMathOperator\Imag{Im}
\renewcommand\Re{\Real}
\renewcommand\Im{\Imag}
\DeclareMathOperator\spn{span}
\renewcommand\d{{\rm d}}
\newcommand{\e}{{\rm e}}
\newcommand{\ii}{{\rm i}}
\newcommand{\N}{{\mathbb N}}
\newcommand{\RR}{{\mathbb R}}
\newcommand{\CC}{{\mathbb C}}
\newcommand{\Z}{{\mathbb Z}}
\newcommand\eps{\varepsilon}
\newcommand\wt{\widetilde}
\newcounter{marke}
\newcommand{\bl}{\begin{list}{\roman{marke})}{\usecounter{marke}
\topsep 0 cm \itemsep 0cm}}
\newcommand{\el}{\end{list}}
\newcommand{\defeq}{\mathrel{\mathop:}=}
\newcommand{\defequ}{\mathrel{\mathop:}\hspace*{-0.72ex}&=}
\newcommand\ds{\displaystyle}
\newcommand\D{\partial}
\newcommand\dt{{\displaystyle{\frac{\d}{\d t}}}}
\newcommand\dx{{\displaystyle{\frac{\d}{\d x}}}}
\newcommand\dr{{\displaystyle{\frac{\d}{\d r}}}}
\newcommand\spt{\sigma_{\rm p}}
\newcommand\sess{\sigma_{\rm ess}}
\newcommand\sessreg{\sigma_{\rm ess}^{\rm \,r}}
\newcommand\sesssing{\sigma_{\rm ess}^{\rm \,s}}
\DeclareMathOperator\sign{sign}
\DeclareMathOperator\ran{ran}
\newcommand\vect[2]{\binom{#1}{#2}}
\newcommand{\address}[1]{\noindent{\small #1}\par }
\newcommand{\email}[1]{\noindent{\small\tt #1}\par\medskip }
\newcommand\void[1]{}
\newcommand{\tp}{\widetilde p}
\newcommand{\tq}{\widetilde q}
\newcommand{\tb}{\widetilde b}
\newcommand{\tc}{\widetilde c}
\newcommand{\td}{\widetilde d}
\newcommand{\tA}{\widetilde A}
\newcommand{\tB}{\widetilde B}
\newcommand{\tC}{\widetilde C}
\newcommand{\tD}{\widetilde D}
\newcommand{\ttpi}{\widetilde \pi}
\newcommand{\ttrho}{\widetilde \rho}
\newcommand{\tkappa}{\widetilde \kappa}
\newcommand{\tS}{\widetilde S}
\newcommand{\tDelta}{\widetilde \Delta}
\newcommand{\tbeta}{\widetilde \beta }
\newcommand{\tgamma}{\widetilde \gamma}
\begin{document}

\title{Essential spectrum of systems of singular differential equations}

\author{\hspace{-9mm}
Orif O.\ Ibrogimov$^{\circ}$, Heinz Langer$^{\sharp}$, Matthias Langer$^{\dagger}$, and Christiane Tretter$^{\circ}$\footnote{
The first, second and last author gratefully acknowledge the support of the German Research Foundation (DFG), grant no.\ TR368/6-2, and of the Swiss National Science Foundation (SNF), grant no.\ 200021-119826/1.
The third author acknowledges the support of the Engineering and Physical Sciences Research Council (EPSRC), grant no.\ EP/E037844/1, and
of the London Mathematical Society under Scheme 4, ref.~no.~4518.
The last three authors also thank the Nuffield Foundation, grant no.\ NAL/01159/G.
}}


\date{\emph{Dedicated to the memory of Professor B\'ela Sz\H{o}kefalvi-Nagy} \\[2mm] {\small \today} \vspace{-5mm}}

\maketitle

\begin{center}
\address{\phantom{I}$^{\circ}$Mathematisches Institut, Universit\"at Bern \\
Sidlerstr.~5, 3012 Bern, Switzerland}
\email{orif.ibrogimov@math.unibe.ch, tretter@math.unibe.ch}
\address{\phantom{I}$^{\sharp}$Institut f\"ur Analysis und Scientific Computing,
Vienna University of Technology \\
Wiedner Hauptstr.~8--10, 1040 Wien, Austria}
\email{hlanger@email.tuwien.ac.at}
\address{\phantom{I}$^{\dagger}$Department of Mathematics and Statistics, University of Strathclyde \\
26 Richmond Street, Glasgow G1 1XH, United Kingdom}
\email{m.langer@strath.ac.uk}
\end{center}

\begin{abstract}
In this paper we develop a new method to determine the essential spectrum of
coupled systems of singular differential equations.
Applications to problems from magnetohydrodynamics and astrophysics are given.
\\[1ex]
\textit{Mathematics Subject Classification (2000):} 47A10, 34L05, 47A55, 76E99.
\end{abstract}

\section{Introduction}
\label{sec1}

Coupled systems of differential equations and related spectral problems are ubi\-quitous in stability problems in physics and engineering. Unlike scalar differential equations, their spectrum need not be discrete, even if the underlying domain is compact and all coefficients are smooth and regular. If the domain is compact, then the essential spectrum consists of one part, called regular part, which may be caused by cancellations in leading order coefficients of the formal determinant;
an additional part of essential spectrum, called singular part, may occur if the domain is no longer compact or the coefficients have singularities at the boundary. In view of stability or numerical approximations, knowledge of the location of the entire essential spectrum is of crucial importance.

This is reflected by the vast literature on this topic which includes papers by Grubb and Geymonat \cite{MR0435621}, Descloux and Geymonat \cite{MR580568}, Kako \cite{MR894249}, Ra\u{\i}kov \cite{MR1143439}, Atkinson, H.\ Langer, Mennicken, and Shkalikov \cite{MR1285306}, Beyer \cite{MR1347113}, H.\ Langer and M\"oller  \cite{MR1380711}, Konstantinov \cite{MR1706515}, Faierman and M\"oller \cite{MR1742577}, Mennicken, Naboko, and Tretter \cite{MR1887017}, Kurasov and Nabo\-ko \cite{MR2063547}, Marletta and Tretter \cite{MR2363469}, Qi and Chen \cite{MR2793253}, and many more papers by these authors and by others.

The new result in the present paper is a formula for the essential spectrum of systems of singular ordinary differential equations of the form
\begin{equation}
\label{system}
\begin{aligned}
 -(py_1')'+ qy_1  - (\ov{b} y_2)'+ \ov{c} y_2 - \la y_1 &= f_1, \\
 b y_1'+ c y_1 + d y_2 - \la y_2 &= f_2,
\end{aligned}
\end{equation}
either on the unbounded interval $[0,\infty)$, or on the finite interval~$(0,1]$ with coefficients having singularities at $0$. 
This formula (Theorem~\ref{ess_spec1} and Theorem~\ref{essspec01}, respectively) reveals the different nature of the two parts of the essential spectrum. The regular part depends only on the values of the coefficients $p$, $d$, and $b$ within the interval $[0,\infty)$. The singular part is determined by the 
limiting behaviour at $\infty$ of more involved combinations of all coefficients of the system and some of their derivatives.
Our assumptions allow for cases where the essential spectrum is unbounded from both sides.

Our main tool is a theorem on the essential spectrum of differential operators on $[0,\infty)$ with asymptotically constant coefficients due to Edmunds and Evans (see \cite{MR89b:47001}). Applied to the first Schur complements associated with the system \eqref{system}, it allows us to characterize the singular part of the essential spectrum.  The regular part is captured by means of Glazman's decomposition principle and a result of \cite{MR1285306}
derived by means of the second Schur complement.
Earlier approaches used the asymptotic Hain--L\"ust operator (which is the first Schur complement) (see e.g.\ \cite{MR2388939} and \cite{MR2169702}) or  
a transformation to canonical systems (see e.g.\ \cite{MR2793253}). The results therein (sometimes under slightly different assumptions) are all covered by our general
formula for the essential spectrum.

The paper is organized as follows. In Section \ref{sec2} we provide the necessary operator-theoretic framework
for systems \eqref{system} on $[0,\infty)$. In Section \ref{sec2a}, under some boundedness assumptions on the coefficients, we establish the relation between the essential spectra of the matrix differential operator given by \eqref{system} and its Schur complement. In Section \ref{sec2b}, assuming a certain limit behaviour at~$\infty$ of the coefficients of the Schur complement, we state and prove our main result (Theorem~\ref{ess_spec1}). In Section \ref{sec3}, under slightly stronger assumptions, we give 
an overview of the form of the essential spectrum which may consist of at most two possibly unbounded intervals.
In Section \ref{sec4} we establish the analogue of Theorem~\ref{ess_spec1} for systems of differential equations on $(0,1]$ with coefficients singular at $0$ by a transformation to systems on $[0,\infty)$ (Theorem~\ref{essspec01}). In Section~\ref{sec5} we show that our results 
unify, and shorten, the computation of the essential spectrum in the three papers mentioned above and in \cite{MR2074778}. 
In Section~\ref{sec6} we study the essential spectrum of a problem from \cite{MR1347113} arising in the stability analysis of spherically symmetric stellar equilibrium models.

We dedicate this paper to the memory of Prof.\ B\'ela Sz\H{o}kefalvi-Nagy, one of the pioneers of operator theory in the 20th century. H.L.\ had the great chance to meet him in 1959 and to enjoy his support and friendship for almost 40 years.



\section{Matrix differential operators on $[0,\infty)$}
\label{sec2}

In this section we consider $2\times 2$ matrix differential operators on $[0,\infty)$ of mixed order at most $2$ associated with a system of differential equations~\eqref{system} singular only at $\infty$.

We introduce the differential \vspace{-1mm} expressions
\begin{alignat*}{2}
  \tau_A \defequ -\dt p \dt + q, & \qquad
  \tau_B \defequ - \dt \ov{b}  + \ov{c}, \\
  \tau_C \defequ b \dt + c, & \qquad
  \tau_D \defequ d,
\end{alignat*}
with coefficient functions $p$, $q$, $b$, $c$, $d$ satisfying certain assumptions specified below.


Here, and in the sequel, for a subinterval $J\subset \RR$, $k\in \N_0$ and $K=\RR$ or $K={\mathbb C}$, we denote by  $C^k(J,K)$ the space of $k$-times continuously differentiable functions on $J$ with values in $K$; if $J$ is open, $C_0^k(J,K)$ is the space of all functions $f\in C^k(J,K)$ with compact support contained in $J$. If $k=0$, we write $C(J,K)$ and $C_0(J,K)$; if $K={\mathbb C}$, we also write $C^k(J)$ and $C^k_0(J)$.
Finally, ${\rm AC_{loc}}(J)$ denotes the space of locally absolutely continuous functions on $J$.  

\vspace{3mm}

\noindent
\textbf{Assumption (A).} \
$p,\,d\in C^{2}([0,\infty),\RR)$, $b \in C^2([0,\infty),\mathbb C)$, $c\in C^1([0,\infty),\mathbb C)$, $q\in C([0,\infty),\RR)$ 
with $p(x)>0$, $x\in[0,\infty)$.


\begin{remark}
For the results in Sections \ref{sec2} and \ref{sec2a}, the assumptions on $p$, $d$, and~$b$ can be weakened to $p,\,d\in C^1([0,\infty),\RR)$, $b \in C^1([0,\infty),\mathbb C)$; the stronger assumptions in (A) are needed for the main Theorem \ref{ess_spec1}.
\end{remark}

\medskip

Let $A_0$, $B_0$, $C_0$, $D_0$ be the operators in the Hilbert space $ L^2(0,\infty)$ induced by the differential expressions $\tau_A$, $\tau_B$, $\tau_C$, $\tau_D$ with domains
\[
  \cD(A_0):=C^2_0((0,\infty)), \quad \cD(B_0)=\cD(C_0):=C^1_0((0,\infty)), \quad \cD(D_0):=C_0((0,\infty)).
\]
In the Hilbert space $L^2(0,\infty)^2=L^2(0,\infty)\oplus L^2(0,\infty)$, we introduce the matrix differential operator
\begin{equation} 
\label{A0}
\begin{aligned}
  \cA_0 &\defeq \begin{pmatrix} A_0 & B_0 \\ C_0 & D_0 \end{pmatrix} \!=\!
  \begin{pmatrix} -\dt p \dt + q & - \dt \ov{b} + \ov{c} \\[2ex]
  b \dt + c & d \end{pmatrix}\!, \\ 
  \cD(\cA_0) &:= C_0^2((0,\infty))\oplus C_0^1((0,\infty)).
\end{aligned}  
\end{equation}

\begin{remark}
\label{Cinfcore}
It is not difficult to see that $C_0^\infty((0,\infty))\oplus C_0^\infty((0,\infty))$ is a core for $\cA_0$ since $p$, $b \in C^1([0,\infty))$ and $q$, $c$, $d\in C([0,\infty))$ by Assumption (A).
\end{remark}

It is well-known that closures and adjoints of block operator matrices in a Hilbert space product $\cH_1 \oplus \cH_2$ need not have domains decomposing into a corresponding direct sum; this may happen even for matrix differential operators on compact intervals with smooth coefficients. In general, their domains may contain conditions coupling the two components (see e.g.\ \cite[Theorems~2.2.14, 2.2.18]{CT}).

\begin{proposition}
\label{ha}
The operator $\cA_0$ in \eqref{A0} is symmetric in $L_2(0,\infty)^2$
with
\begin{align}
\label{domA0adj}
  &\cD(\cA_0^*) \!=\! \biggl\{\binom{y_1}{y_2}\in L_2(0,\infty)^2\colon
  y_1,\,py_1'+\ov{b}y_2 \in {\rm AC_{loc}}([0,\infty)), \\
  &\hspace{2.5cm} -\bigl(py_1'+\ov{b}y_2\bigr)'\!+qy_1+\ov{c}y_2,\,by_1'+cy_1+dy_2 \in L^2(0,\infty)\biggr\}, \hspace*{-5mm}\nonumber \\
\label{A0adj}
  &\cA_0^*\binom{y_1}{y_2}
  \!=\!\! \begin{pmatrix} -\bigl(py_1'+\ov{b}y_2\bigr)'+qy_1+\ov{c}y_2 \\[2ex]
  by_1'+cy_1+dy_2 \end{pmatrix},
\end{align}
and ${\rm dim\, ker}(\cA_0^*-\lambda)\le 2$ for $\la\in\mathbb C$.
\end{proposition}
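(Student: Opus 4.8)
The plan is to identify $\cA_0^*$ with the operator $\cA_1$ defined by the right-hand sides of \eqref{domA0adj}--\eqref{A0adj}, by proving the two inclusions $\cA_1\subseteq\cA_0^*$ and $\cA_0^*\subseteq\cA_1$; symmetry and the kernel bound then come out along the way. For the inclusion $\cA_1\subseteq\cA_0^*$ (which also gives symmetry) I would fix $\phi=\binom{\phi_1}{\phi_2}\in\cD(\cA_0)$ and $y=\binom{y_1}{y_2}\in\cD(\cA_1)$ and evaluate $\langle\cA_0\phi,y\rangle$ by integration by parts. The device that makes this clean is to keep the quasi-derivative $S\defeq py_1'+\ov{b}y_2$ together rather than integrating by parts in $py_1'$ and $\ov{b}y_2$ separately: the second-order term from $\tau_A$ and the first-order term $b\phi_1'$ from $\tau_C$ combine into $-\int_0^\infty\phi_1\,\overline{S'}$, while the first-order term from $\tau_B$ recombines with the zeroth-order terms into $\int_0^\infty\phi_2\,\overline{by_1'+cy_1+dy_2}$. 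All boundary terms vanish since $\phi$ has compact support in $(0,\infty)$. This yields $\langle\cA_0\phi,y\rangle=\langle\phi,\cA_1y\rangle$, hence $y\in\cD(\cA_0^*)$ and $\cA_0^*y=\cA_1y$; taking $y\in\cD(\cA_0)\subseteq\cD(\cA_1)$, on which $\cA_1$ agrees with $\cA_0$, gives $\cA_0\subseteq\cA_0^*$, i.e.\ symmetry.

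The substantial direction is $\cA_0^*\subseteq\cA_1$. Let $y\in\cD(\cA_0^*)$ with $\cA_0^*y=w\in L^2(0,\infty)^2$. Since $C_0^\infty((0,\infty))\oplus C_0^\infty((0,\infty))$ is a core (Remark~\ref{Cinfcore}), it suffices to test the defining identity $\langle\cA_0\phi,y\rangle=\langle\phi,w\rangle$ against $\phi=\binom{\phi_1}{0}$ and $\phi=\binom{0}{\phi_2}$ with $\phi_1,\phi_2\in C_0^\infty((0,\infty))$. Moving the derivatives onto the smooth compactly supported test functions turns each choice into a distributional identity; together they read $-(py_1'+\ov{b}y_2)'+qy_1+\ov{c}y_2=w_1$ and $by_1'+cy_1+dy_2=w_2$ in $\mathcal D'((0,\infty))$. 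The heart of the argument is then the regularity bootstrap. The first identity says that the distributional derivative of $S=py_1'+\ov{b}y_2$ equals $qy_1+\ov{c}y_2-w_1\in L^2_{\mathrm{loc}}$; a distribution with locally integrable derivative is locally absolutely continuous, so $S\in{\rm AC_{loc}}([0,\infty))$. Then $py_1'=S-\ov{b}y_2\in L^2_{\mathrm{loc}}$ as a genuine function, and since $p\in C^2$ with $p>0$ everywhere by Assumption~(A) we may divide to obtain $y_1'\in L^2_{\mathrm{loc}}$, whence $y_1\in{\rm AC_{loc}}([0,\infty))$. I expect this to be the main obstacle of the whole proposition, and I would stress two points: it is precisely here that one needs $p>0$ pointwise (not merely $p\ne0$), and one never divides by $b$, which is what permits $b$ to vanish. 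Once $y_1,S\in{\rm AC_{loc}}$ the two distributional identities become the genuine equations \eqref{A0adj} with both components in $L^2(0,\infty)$, so $y\in\cD(\cA_1)$ and $\cA_0^*y=\cA_1y$, completing $\cA_0^*=\cA_1$.

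Finally, for $\dim\ker(\cA_0^*-\la)\le2$ I would pass to the first-order formulation in the variables $\binom{y_1}{S}$. A kernel element satisfies $y_1'=\tfrac1p(S-\ov{b}y_2)$ and $S'=(q-\la)y_1+\ov{c}y_2$ together with the algebraic relation $by_1'+cy_1+(d-\la)y_2=0$; substituting the expression for $y_1'$ turns the latter into $\bigl(d-\la-\tfrac{|b|^2}{p}\bigr)y_2=-\tfrac bp S-cy_1$. On any interval where the scalar factor $d-\la-|b|^2/p$ is nonzero this determines $y_2$ in terms of $\binom{y_1}{S}$, and the two differential relations become a linear $2\times2$ first-order system $\binom{y_1}{S}'=M(x)\binom{y_1}{S}$ with locally integrable $M$, whose solution space is at most two-dimensional; equivalently, the Cauchy data $\bigl(y_1(x_0),S(x_0)\bigr)$ determine the solution. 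The only delicate point in this step is the elimination of $y_2$ where $d-\la-|b|^2/p$ vanishes, since the relation there is a constraint rather than a definition of $y_2$; this I would handle by a Gronwall/continuation argument across the closed zero set, using continuity of the absolutely continuous functions $y_1,S$ to match Cauchy data from one side to the other. Injectivity of $y\mapsto\bigl(y_1(x_0),S(x_0)\bigr)$ on the kernel, and hence the bound $\dim\ker(\cA_0^*-\la)\le2$, then follows.
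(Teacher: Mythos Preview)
Your argument for the domain characterisation is correct and takes a genuinely different route from the paper's. The paper does \emph{not} argue distributionally: it first proves a separate abstract lemma (Lemma~\ref{adjoint} in the Appendix) saying that if a functional $u\mapsto(Tu,x)$ is bounded on a finite-codimension subspace of $\cD(T)$, then $x\in\cD(T^*)$; it then applies this with $T=\ii\,\d/\d t$ on a bounded interval and the single linear constraint $h\perp 1/p$ (which cuts out $\{pf_1':f_1\in C_0^2\}$ inside $C_0^1$) to show $y_1\in{\rm AC_{loc}}$ \emph{first}, and only afterwards gets $py_1'+\ov{b}y_2\in{\rm AC_{loc}}$. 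Your order is reversed: you obtain $S=py_1'+\ov{b}y_2\in{\rm AC_{loc}}$ immediately from $S'\in L^2_{\rm loc}$, and then recover $y_1'\in L^2_{\rm loc}$ by dividing $py_1'=S-\ov{b}y_2$ by the nowhere-vanishing $p$. Your route is more elementary (no auxiliary lemma needed) and isolates exactly where $p>0$ enters; the paper's route, by contrast, yields a reusable abstract lemma and stays entirely within Hilbert-space operator theory without invoking distributions.

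For the kernel bound your reduction to a first-order system is equivalent to the paper's reduction to the scalar equation $S(\la)y_1=0$, but your treatment of the zero set of $d-\la-|b|^2/p=\Delta-\la$ is unnecessarily delicate. The paper simply restricts to $\la\in\CC\setminus\RR$; since $d$ and $\Delta$ are real-valued by Assumption~(A), both $d-\la$ and $\Delta-\la$ are then nowhere zero, so one can solve $(d-\la)y_2=-by_1'-cy_1$ directly and quote the existence/uniqueness theorem for $S(\la)y_1=0$ (Remark~\ref{schurdef}). This already gives finite deficiency indices, which is all that is used later. Your ``Gronwall/continuation'' sketch across the zero set would only be needed for real $\la$, and the paper itself does not prove that case despite the wording of the statement; if you keep your formulation, just observe that for non-real $\la$ the factor never vanishes and drop the continuation discussion.
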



The proof of this proposition is given in the Appendix.

\medskip

In this paper we use the following definition of the {\it essential spectrum} $\sess(T)$  of  
a densely defined closed linear operator $T$ in a Hilbert space $\cH$, 
which coincides with the definition of the essential spectrum $\sigma_{e3}(T)$ in \cite[Sections~I.3.4 and IX.1]{MR89b:47001}:
\[
  \sess (T) := \{ \la \in \CC : T-\la \text{ is not Fredholm}\};
\]
here $T$ is called  \emph{Fredholm} if $\ran(T)$ is closed and both  $\ker(T)$ and $(\ran(T))^\perp$ are finite-dimensional.

\smallskip

Since the deficiency numbers of $\cA_0$ are finite by Proposition \ref{ha}, we can use the following characterization of the essential spectrum. 

\begin{lemma}
\label{le.essunb}
\label{propA}
Let $T_{0}$ be a densely defined symmetric operator in a Hilbert space $\cH$ with finite deficiency numbers.
Then for every closed symmetric extension $T$ of $\,T_{0}$ in $\cH$ we have 
$\sess (T)=\sess (\ov{T_0}) \subset \RR$ 
and, for $\la\in\RR\setminus \sigma_{\rm p}(\ov{T_0})$,
\begin{align}
\label{sept13}
   \la \notin \sess(T)=\sess (\ov{T_0}) \iff  
   (T_0-\la)^{-1} \mbox{ is bounded on } \ran(T_0-\la).
\end{align}
If $\la\in\RR\setminus \big( \sigma_{\rm p}(\ov{T_0}) \cup  \sess(\ov{T_0})\big)$, then 
there exists a self-adjoint extension $T$ of $\,T_{0}$ such that $\la\in\rho(T)$.
\end{lemma}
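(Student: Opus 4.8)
Every assertion concerns the closed symmetric operator $\ov{T_0}$ and its closed symmetric extensions. Any such extension satisfies $\ov{T_0}\subseteq T\subseteq T_0^*$, and it inherits finite deficiency indices from $T_0$: since $T\supseteq\ov{T_0}$ gives $T^*\subseteq T_0^*$, we have $n_\pm(T)\le n_\pm(\ov{T_0})<\infty$. Throughout I would lean on two elementary facts: a symmetric operator is injective with closed range at every non-real point, and changing the domain of a closed operator by a finite-dimensional subspace preserves the Fredholm property. The plan is to treat reality and invariance of $\sess$, then the equivalence \eqref{sept13}, and finally the construction of a self-adjoint extension, which I expect to be the main obstacle.

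First I would show $\sess(T)=\sess(\ov{T_0})\subset\RR$. For the inclusion, if $\Im\la\neq0$ the estimate $\|(T-\la)u\|\ge|\Im\la|\,\|u\|$ shows that $T-\la$ is injective with closed range, while its cokernel is $\ker(T^*-\ov\la)$, whose dimension is a (finite) deficiency index of $T$; hence $T-\la$ is Fredholm and $\la\notin\sess(T)$. For the equality I would use $\dim\bigl(\cD(T)/\cD(\ov{T_0})\bigr)\le n_++n_-<\infty$: adjoining finitely many dimensions to the domain enlarges both kernel and range by finite-dimensional amounts, so $\ov{T_0}-\la$ is Fredholm if and only if $T-\la$ is. This is precisely the stability of $\sigma_{e3}$ under finite-dimensional extensions of the domain (see \cite{MR89b:47001}).

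For the equivalence \eqref{sept13} fix $\la\in\RR\setminus\sigma_{\rm p}(\ov{T_0})$, so that $\ov{T_0}-\la$ is injective. If $\la\notin\sess(\ov{T_0})$, then $\ov{T_0}-\la$ is Fredholm and injective, hence has closed range, and by the closed graph theorem $(\ov{T_0}-\la)^{-1}$ is bounded on $\ran(\ov{T_0}-\la)$; its restriction to $\ran(T_0-\la)$ is $(T_0-\la)^{-1}$. Conversely, boundedness of $(T_0-\la)^{-1}$ means $\|x\|\le C\|(T_0-\la)x\|$ for $x\in\cD(T_0)$, and since $\cD(T_0)$ is a core for $\ov{T_0}$ this lower bound passes to $\ov{T_0}$; thus $\ov{T_0}-\la$ is injective with closed range. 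Its cokernel is $\ker(T_0^*-\la)$, which meets $\cD(\ov{T_0})$ only in $\ker(\ov{T_0}-\la)=\{0\}$ and so injects into $\cD(T_0^*)/\cD(\ov{T_0})$; hence $\dim\ker(T_0^*-\la)\le n_++n_-<\infty$ and $\ov{T_0}-\la$ is Fredholm, i.e.\ $\la\notin\sess(\ov{T_0})$.

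For the last assertion let $\la\in\RR\setminus\bigl(\sigma_{\rm p}(\ov{T_0})\cup\sess(\ov{T_0})\bigr)$. Since $\sess(\ov{T_0})$ is a proper closed subset of $\RR$, the set $\CC\setminus\sess(\ov{T_0})$ is connected, so the constant Fredholm index of $\ov{T_0}-z$ there equals $-n_-$ in the upper and $-n_+$ in the lower half-plane; hence $n_+=n_-=:n$ and self-adjoint extensions exist. By \eqref{sept13} the operator $\ov{T_0}-\la$ is bounded below, so $\ran(\ov{T_0}-\la)$ is closed with orthogonal complement $\mathfrak N_\la:=\ker(T_0^*-\la)$ of dimension $n$. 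On $Q:=\cD(T_0^*)/\cD(\ov{T_0})$ (dimension $2n$) the boundary form $[u,v]:=\langle T_0^*u,v\rangle-\langle u,T_0^*v\rangle$ is nondegenerate of signature $(n,n)$, and since $\la$ is real the image of $\mathfrak N_\la$ is a Lagrangian subspace. I would then pick a Lagrangian complement $L$ of it (such complements always exist) and set $T:=T_0^*|_{\cD(\ov{T_0})\dplus L}$, which is self-adjoint because $L$ is Lagrangian. A short computation identifies $\ker(T-\la)$ with the intersection of $L$ and the image of $\mathfrak N_\la$ in $Q$, which is $\{0\}$; hence $\la\notin\sigma_{\rm p}(T)$, and since $\sess(T)=\sess(\ov{T_0})\not\ni\la$ by the first part, we conclude $\la\in\rho(T)$. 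The crux here is that one must \emph{not} simply fill the deficiency (adjoining $\mathfrak N_\la$ would make $\la$ an eigenvalue): the extension has to be built from a Lagrangian subspace transversal to $\mathfrak N_\la$, and ruling out that $\la$ lies in the spectrum for reasons other than being an eigenvalue is exactly where the invariance $\sess(T)=\sess(\ov{T_0})$ is needed.
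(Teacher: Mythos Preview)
Your proof is correct and complete. The overall architecture matches the paper's---finite-dimensional domain perturbation for the invariance $\sess(T)=\sess(\ov{T_0})$, and the closed graph theorem for the forward direction of \eqref{sept13}---but you diverge from the paper in two places and in both cases give a more self-contained argument.

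For the backward direction of \eqref{sept13}, the paper invokes a result of Calkin \cite{MR0003466} to deduce equality of the deficiency indices, and then uses local constancy of $z\mapsto\dim(\ran(\ov{T_0}-z))^\perp$ on the set of points of regular type to conclude that the cokernel at $\la$ is finite-dimensional. Your argument is more direct: you observe that $\ker(T_0^*-\la)$ meets $\cD(\ov{T_0})$ trivially (since $\la\notin\sigma_{\rm p}(\ov{T_0})$) and therefore injects into the $(n_++n_-)$-dimensional quotient $\cD(T_0^*)/\cD(\ov{T_0})$. This bypasses Calkin's theorem entirely and is cleaner.

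For the final claim, the paper again cites \cite{MR0003466} for the existence of a self-adjoint extension with $\la\in\rho(T)$. You instead establish $n_+=n_-$ via constancy of the Fredholm index on the connected set $\CC\setminus\sess(\ov{T_0})$, recognise the image of $\mathfrak N_\la$ in the boundary quotient as Lagrangian (isotropy from $\la\in\RR$, maximality from the dimension count), and build $T$ from a Lagrangian complement. This explicit construction is longer but transparent, and your remark that one must avoid adjoining $\mathfrak N_\la$ itself is exactly the point. The paper's route is shorter only because it outsources the work to an external reference; yours could replace it without loss.
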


\begin{proof}
Since the deficiency numbers of $T_{0}$ are finite, every closed symmetric extension $T$ of $\ov{T_0}$ is a finite-dimensional extension and hence $\sess (T)=\sess (\ov{T_0})$ by \cite[Corollary IX.4.2]{MR89b:47001}. Moreover, if $\la\in \CC\setminus\RR$, then $\ker(\ov{T_0}-\la)=\{0\}$,
$\ran(\ov{T_0}\!-\!\la)$ is closed, and $(\ran(\ov{T_0}\!-\!\la))^\perp$ is finite-dimensional, hence $\la\!\notin\!\sess(\ov{T_0})$.

If $\la\in\RR\setminus \big( \sigma_{\rm p}(\ov{T_0}) \cup \sess (\ov{T_0}) \big)$, then 
$\ran(\ov{T_0}-\la)$ is closed and
$(\ran(\ov{T_0}-\la))^\perp$ is finite-dimensional. Thus, by the closed graph theorem, $(\ov{T_0}-\la)^{-1}\supset (T_0-\la)^{-1}$ is bounded on $\ran(\ov{T_0}-\la) \supset \ran(T_0-\la)$.
This proves ``$\Longrightarrow$'' in \eqref{sept13} and implies, by \cite{MR0003466}\footnote{We discovered this 
reference by a remark in Riesz/Sz.-Nagy's monograph \cite[p.\,340]{MR1068530}.} applied to $\ov{T_0}$, the last claim. 

Vice versa, suppose that 
$(T_0-\la)^{-1}$ is bounded on $\ran(T_0-\la)$. Since $\ov{T_0}-\la$ is injective, it follows that  $\ran(\ov{T_0}-\la) = \ov{\ran(T_0-\la)}$. Hence $(\ov{T_0}-\la)^{-1}$ is bounded on $\ran(\ov{T_0}-\la)$. By \cite{MR0003466} this implies that the deficiency numbers of $\ov{T_0}$  are equal. Since they are finite by assumption and the mapping $z\mapsto \dim (\ran(\ov{T_0}-z))^\perp$ is locally constant on the set of points of regular type of $\ov{T_0}$, it follows that also $\dim (\ran(\ov{T_0} -\la))^\perp$ is finite and hence $\la\notin\sess(\ov{T_0})$.
\end{proof}


\medskip

By Proposition \ref{ha} and Lemma \ref{propA}, the essential spectrum of every closed symmetric extension $\cA$ of $\cA_0$ in \eqref{A0} coincides with $\sess(\ov{\cA_0})\subset\RR$.

In order to determine $\sess(\cA)$, we employ I.M.\,Glazman's decomposition principle (see \cite{MR0190800}).
To this end, for an arbitrary subinterval $J\subset [0,\infty)$ we denote by $\cA_J$ the closure of the symmetric operator $\cA_{0,J}$ in $L^2(J)^2$ generated by the restriction of $\cA_0$ to $C^2_0(J)\oplus C^1_0(J)$, i.e.
\begin{equation}
\label{clos1}
\cA_J=\ov{\cA_{0,J}}=\ov{\cA_0|_{C^2_0(J)\oplus C^1_0(J)}}; 
\end{equation}
in particular, $\cA_{(0,\infty)}\!=\!\ov{\cA_0}$.  
Then, for arbitrary  $t_0\in (0,\infty)$, the operator~$\cA$~in $L^2(0,\!\infty)^2$ is a 
finite-dimensional extension of the orthogonal sum $\cA_{(0,t_0)}\oplus\cA_{(t_0,\infty)}$  and hence
\begin{equation}
\label{glaz}
\sess(\cA)=\sess(\cA_{(0,t_0)})\,\cup\,\sess(\cA_{(t_0,\infty)}),
\end{equation}
see e.g.\ \cite[Corollary~IX.4.2 and Theorem~IX.9.3]{MR89b:47001}.

\section{Schur complement and essential spectrum }
\label{sec2a}


Schur complements are important tools to describe the spectrum, and especially the essential spectrum, of an operator matrix (see e.g.\ \cite[Sections~2.3, 2.4]{CT}). 
Here we use the first Schur complement $S(\la)$ of the operator matrix $\cA_0$ in~\eqref{A0}. 
Formally, $S(\la)$ is given by the scalar second order differential expression
\begin{align*}
  S(\la)\!&\defeq \tau_A-\la-\tau_B(\tau_D-\la)^{-1}\tau_C \\
  &=   -\dt p \dt + q - \la
  - \left( - \dt \ov{b}  + \ov{c} \right) \dfrac{1}{d\! -\! \la} \left( b \dt + c \right)
\end{align*} 
for $\la\in\CC\setminus d([0,\infty))$, where $d([0,\infty)):=\{d(t):t\in[0,\infty)\}$ is the range of the function $d$.
If we define, for $\la\in\CC\setminus d([0,\infty))$,
\begin{align} 
\label{pirho}
  \tpi(\cdot,\la)& \defeq p-\frac{|b|^2}{d-\la}, \qquad
  \trho(\cdot,\la) \defeq -\frac{2\Im (b\,\ov{c})}{d-\la}+\ii \frac{\partial}{\partial t}\tpi(\cdot,\la),\\[1mm]
\label{kappa}  
  \tka(\cdot,\la)&\defeq q - \la - \frac{|c|^2}{d-\la}+\dfrac{\partial}{\partial t}\left(\frac{\ov{b}\,c}{d-\la}\right),
\end{align}
it is not difficult to see that $S(\la)$ can be written in the following two ways:
\begin{align}
\label{Ssymm}
   S(\la)\!&=
             - \dfrac{\d}{\d t} \tpi(\cdot,\la)\dfrac{\d}{\d t}  -\frac{2\Im (b\,\ov{c})}{d-\la} \ii\dt +   \tka(\cdot,\la)\\
\label{S}
  \!&= -\tpi(\cdot,\la)\dfrac{\d^2}{\d t^2} +\trho(\cdot,\la)\, \ii\dt + \tka(\cdot,\la).
\end{align}

It is clear that, for $\la\in\RR\setminus d([0,\infty))$, the differential
expression $S(\la)$ is formally symmetric. For any subinterval $J\subset[0,\infty)$ and $\la\in \RR\setminus d(J)$, it induces a symmetric operator $S_{0,J}(\la)$ in $L^2(J)$ with domain $C^{2}_0(J)$; we denote the closure of $S_{0,J}(\la)$ by $S_J(\la)$:
\begin{equation}
\label{clos2}
  \cD(S_{0,J}(\la)):=C^2_0(J), \quad S_{0,J}(\la)u:=S(\la)u, \qquad
  S_J(\la) := \ov{S_{0,J}(\la)}.
\end{equation}


\medskip

The set of points $\la$ where the leading coefficient $\tpi(\cdot,\la)$ of $S(\la)$ vanishes, 
and hence classical Sturm--Liouville theory fails, plays an important role. The following lemma 
implies that this set is contained in the range of the function
\begin{equation}
\label{Delta}
 \Delta(t) \defeq d(t)-\frac{|b(t)|^2}{p(t)}, \quad t\in[0,\infty).
\end{equation}


\begin{lemma} 
\label{re.Pdelta}
\begin{enumerate}
\item[{\rm i)}] 
If $\la \notin d([0,\infty))$, then 
\begin{equation}\label{pidelta}
  \tpi(t,\lambda)
  =p(t)\frac{\Delta(t)-\lambda}{d(t)-\lambda}, \quad  t\in [0,\infty).
\end{equation}
\item[{\rm ii)}] For a subinterval $J\subset[0,\infty)$, the range $\Delta(J)$ is given by 
\begin{align*}
  \Delta(J)=&\bigl\{d(t)\!:t\in\!J \text{ with } b(t)=0\bigr\} 
  \cup\bigl\{\lambda\in\RR\!:\exists\,t\in\!J\text{ with } \tpi(t,\lambda)=0\bigr\}.
\end{align*}
\end{enumerate}
\end{lemma}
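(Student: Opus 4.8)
The plan is to prove i) by a direct algebraic identity and then to read off ii) from i) through a case distinction according to whether $b(t)$ vanishes.

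For i), I would simply substitute the definition \eqref{Delta} of $\Delta$ into the right-hand side of \eqref{pidelta}. Since $\la\notin d([0,\infty))$ guarantees $d(t)-\la\neq 0$ for every $t$, and $p(t)>0$ by Assumption~(A), we obtain
\[
  p(t)\,\frac{\Delta(t)-\la}{d(t)-\la}
  =p(t)\,\frac{d(t)-\la-|b(t)|^2/p(t)}{d(t)-\la}
  =p(t)-\frac{|b(t)|^2}{d(t)-\la}
  =\tpi(t,\la),
\]
which is exactly \eqref{pidelta}. I note that, for a single fixed $t$, this chain of equalities remains valid under the weaker condition $d(t)\neq\la$ alone; this local version is what part ii) actually needs.

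For ii), write $\Delta(J)=\{\Delta(t):t\in J\}$ and denote the two sets on the right-hand side by $M_1:=\{d(t):t\in J,\ b(t)=0\}$ and $M_2:=\{\la\in\RR:\exists\,t\in J,\ \tpi(t,\la)=0\}$. First I would show $\Delta(J)\subseteq M_1\cup M_2$: fix $t_0\in J$ and put $\mu:=\Delta(t_0)$. If $b(t_0)=0$ then $\mu=d(t_0)\in M_1$. If $b(t_0)\neq0$ then $\mu=d(t_0)-|b(t_0)|^2/p(t_0)<d(t_0)$, so $d(t_0)-\mu=|b(t_0)|^2/p(t_0)\neq0$ and a direct substitution gives $\tpi(t_0,\mu)=p(t_0)-|b(t_0)|^2/(d(t_0)-\mu)=0$, whence $\mu\in M_2$. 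For the reverse inclusion $M_1\cup M_2\subseteq\Delta(J)$: any $\mu=d(t_0)$ with $b(t_0)=0$ equals $\Delta(t_0)$; and any $\mu$ with $\tpi(t_0,\mu)=0$ (for which necessarily $d(t_0)\neq\mu$) satisfies $p(t_0)(d(t_0)-\mu)=|b(t_0)|^2$, i.e.\ $\mu=\Delta(t_0)$. In both cases $\mu\in\Delta(J)$.

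The computation is elementary, so there is no serious obstacle; the one point requiring care is the bookkeeping of the pole of $\tpi(t,\cdot)$ at $\la=d(t)$. Precisely the values $\Delta(t)=d(t)$ arising from points with $b(t)=0$ are the places where $\tpi(t,\cdot)$ has a pole rather than a zero, so they cannot be detected through the zero set $M_2$; this is exactly why the separate set $M_1$ must be retained in the characterization, and forgetting it would break the equality at those points.
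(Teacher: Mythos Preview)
Your proof is correct and follows essentially the same route as the paper: part i) is the same direct algebraic identity, and part ii) is the same case distinction on whether $b(t)=0$, phrased by you as two inclusions and in the paper via the single equivalent equation $p(t)(d(t)-\lambda)=|b(t)|^2$.
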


\begin{proof}
i) Formula \eqref{pidelta} follows if we note that, by the definitions of $\pi(\cdot,\la)$ and $\Delta$ in \eqref{pirho} and \eqref{Delta},
\[
 \tpi(t,\lambda)=p(t)-\frac{p(t)\bigl(d(t)-\Delta(t)\bigr)}{d(t)-\lambda},  \quad  t\in [0,\infty).
\]
ii) We have $\lambda\in\Delta(J)$ if and only if there is a $t\in J$ with 
  $p(t)\bigl(d(t)-\lambda\bigr)=|b(t)|^2$.
Since $p(t)\ne0$, this holds if and only if $b(t)=0$ and $\lambda=d(t)$, or if $b(t)\ne0$ and
$p(t)=|b(t)|^2/(d(t)-\lambda)$, i.e.\ $\tpi(t,\lambda)=0$. Note that the latter automatically implies that $b(t)\ne0$ because $p(t)\ne0$.
\end{proof}


\begin{remark}
\label{schurdef}
From Assumption (A) and \eqref{Ssymm}, \eqref{pidelta} it follows that the differential expression $S(\la)$  satisfies the conditions \cite[(10.3)]{MR89b:47001}  of the existence and uniqueness theorem  \cite[Theorem~III.10.1]{MR89b:47001} on a subinterval $J\!\subset\! [0,\infty)$ for $\la\!\in\!\CC\setminus (d(J) \cup \Delta(J))$; in \vspace{-1mm} fact,
\begin{align*}
 {\rm (i)}  \ \ &\,\pi(\cdot,\la) \ne 0, \ \frac 1{\pi(\cdot,\la)} \in L^1_{\rm loc}(J) ,\\
 {\rm (ii)} \ \ &-\dfrac{2\Im (b\,\ov{c})}{d-\la} \dfrac 1{\pi(\cdot,\la)} = -\dfrac{2\Im (b\,\ov{c})}{p(\Delta-\la)} \in {\rm AC_{loc}}(J), \quad \kappa(\cdot,\la) \in L^1_{\rm loc}(J). 
\end{align*}
Since, in addition, $-\dfrac{2\Im (b\,\ov{c})}{d-\la} \in {\rm AC_{loc}}(J)$, the symmetric operator $S_{0,J}(\la)$ for $\la\in\CC\setminus (d(J) \cup \Delta(J))$ also satisfies the conditions of \cite[Theorem III.10.7]{MR89b:47001} and hence the deficiency numbers of $S_{0,J}(\la)$ are~$\le 2$.
\end{remark}

\smallskip

It is an immediate consequence of \cite{MR1285306} that 
the range of the function $\Delta$ in \eqref{Delta} belongs to the essential spectrum of $\cA$; 
later it will be called the regular part of the essential spectrum.

\begin{proposition}
\label{regpart}
For every closed symmetric extension $\cA$ of the operator~$\cA_0$ in \eqref{A0}, we have
\[
 \ov{\Delta([0,\infty))}\subset\sess\bigl(\cA);
\]
in particular, $\,\ov{\Delta([0,\infty))}=\RR$ implies that $\sess\bigl(\cA)=\RR$.
\end{proposition}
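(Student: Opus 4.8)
The goal is to show that $\overline{\Delta([0,\infty))} \subset \sess(\cA)$ for every closed symmetric extension $\cA$ of $\cA_0$. By Proposition~\ref{ha} and Lemma~\ref{propA}, it suffices to prove this for $\cA = \ov{\cA_0}$, since all such extensions share the same essential spectrum. Moreover, since $\sess(\ov{\cA_0})$ is closed, it is enough to show $\Delta([0,\infty)) \subset \sess(\ov{\cA_0})$ and then pass to the closure.

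The plan is to invoke the result of Atkinson, H.~Langer, Mennicken, and Shkalikov \cite{MR1285306}, as the text suggests. The key structural fact is that $\Delta(t_*)$ is exactly the value of $\la$ at which the leading coefficient $\tpi(\cdot,\la)$ of the first Schur complement $S(\la)$ degenerates at the point $t_*$: by Lemma~\ref{re.Pdelta}, $\la = \Delta(t_*)$ forces either $b(t_*)=0$ with $\la = d(t_*)$, or $\tpi(t_*,\la)=0$. This degeneration of the leading coefficient is precisely the mechanism by which cancellation in the formal determinant produces regular essential spectrum. First I would fix $\la_0 \in \Delta([0,\infty))$ with $\la_0 = \Delta(t_*)$ for some $t_* \in [0,\infty)$, and aim to show that $\cA_0 - \la_0$ fails to be bounded below on its range, or equivalently (via the characterization \eqref{sept13} in Lemma~\ref{propA}) that $(\cA_0 - \la_0)^{-1}$ is unbounded on $\ran(\cA_0 - \la_0)$, which gives $\la_0 \in \sess(\ov{\cA_0})$.

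The concrete route I expect to work is to construct an explicit singular Weyl-type sequence $\bigl(\psi_n\bigr) \subset \cD(\cA_0)$ with $\|\psi_n\| = 1$, $\psi_n \rightharpoonup 0$ weakly, and $\|(\cA_0 - \la_0)\psi_n\| \to 0$, localized near $t_*$. The trick is to exploit the block structure: one chooses the second component $y_2$ to nearly cancel the coupling, so that the effective operator seen by the first component is governed by $S(\la_0)$ whose leading coefficient $\tpi(t_*,\la_0)$ vanishes. Concretely, near $t_*$ one sets $y_1$ to be a highly oscillatory bump $\phi(t)\e^{\ii n t}$ with shrinking or sliding support, and then solves for $y_2$ algebraically from the second equation so that the second row of $(\cA_0 - \la_0)\psi_n$ is small; because the leading second-order term in the first row carries the factor $\tpi(t_*,\la_0)=0$, the high-frequency oscillation no longer produces a large $\la$-independent contribution, and the first row is controllably small. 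This is exactly the singular-sequence construction underlying \cite{MR1285306}, and the cleanest presentation is simply to cite that paper: the statement is billed as "an immediate consequence of \cite{MR1285306}," so the intended proof is a short reduction rather than a from-scratch argument.

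The main obstacle is the case distinction at points where $d(t_*) = \la_0$, i.e.\ where $\la_0$ lies in the range of $d$ and the Schur complement $S(\la_0)$ itself is not defined (since $(\tau_D - \la)^{-1}$ is singular there). In that regime the cancellation is driven by the lower-right block $D_0 - \la$ degenerating rather than by $\tpi$, and one must build the Weyl sequence by loading the second component $y_2$ instead of the first. I would handle this by noting that Lemma~\ref{re.Pdelta}~ii) already splits $\Delta(J)$ into precisely these two types of points, $\{d(t): b(t)=0\}$ and $\{\la : \tpi(t,\la)=0\}$, and treating each on its own while the other is covered by a limiting argument using that $\sess$ is closed. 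The genuinely delicate point to verify carefully is the weak convergence $\psi_n \rightharpoonup 0$ together with the norm estimate on $(\cA_0 - \la_0)\psi_n$ when $t_*$ sits at the boundary point $0$ or escapes to $\infty$; for $t_*=0$ one slides the support into the interior, and for the closure one approximates boundary values of $\Delta$ by interior values, using continuity of $p, d, b$ from Assumption~(A). Given that \cite{MR1285306} supplies exactly this machinery, I would keep the argument to a citation-level reduction and only spell out the identification of $\Delta$ with the degeneration set, which Lemma~\ref{re.Pdelta} has already done.
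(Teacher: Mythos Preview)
Your outline would ultimately work, but you are reconstructing machinery that the paper uses as a black box, and in doing so you miss the one-line reduction that makes all your ``obstacles'' disappear.

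The cited result \cite[Theorem~4.5]{MR1285306} is a statement about the \emph{compact interval} case: for a bounded interval $[0,t_0]$ it gives directly $\sess(\cA_{(0,t_0)}) = \Delta([0,t_0])$. The paper's proof is therefore three lines: given $\la\in\Delta([0,\infty))$, pick (by continuity of $\Delta$) some $t_0\in(0,\infty)$ with $\la\in\Delta([0,t_0])$; cite \cite[Theorem~4.5]{MR1285306} to get $\la\in\sess(\cA_{(0,t_0)})$; then use Glazman's decomposition principle \eqref{glaz}, already set up in Section~\ref{sec2}, to conclude $\la\in\sess(\cA)$. Closing up is automatic since $\sess$ is closed.

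What this buys over your route: the case distinction from Lemma~\ref{re.Pdelta}~ii) (points with $b(t_*)=0$ and $\la=d(t_*)$ versus points with $\tpi(t_*,\la)=0$), the delicate handling of $t_*=0$ or $t_*\to\infty$, and the explicit Weyl-sequence construction are all absorbed into the black-box citation on the compact interval, where none of the singular behaviour at $\infty$ is present. Your sketch is essentially a re-derivation of what happens \emph{inside} \cite{MR1285306}; the paper deliberately avoids that by restricting to a compact piece first via Glazman.
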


\begin{proof}
If $\la\in\Delta([0,\infty))$, there exists a $t_0\in(0,\infty)$ so that
$\la\in\Delta([0,t_0])$. By \cite[Theorem~4.5]{MR1285306} and Glazman's decomposition principle \eqref{glaz}, we have
\begin{align}
\label{ALMS}
  \la \in \Delta([0,t_0]) = \sess(\cA_{(0,t_0)}) \subset \sess(\cA).
\end{align}
Since the essential spectrum is closed, the claimed inclusion follows. 
The last claim is immediate because $\sess(\cA)=\sess(\ov{\cA_0})\subset \RR$.
\end{proof}


In order to characterize the essential spectrum of the restrictions $\cA_{(t_0,\infty)}$ in terms of the Schur complement, we need further assumptions on the coefficient functions in \eqref{A0} and~\eqref{S}. To formulate them, the following notation will be convenient.
%

\begin{notation}
\label{not-inf} 
For functions $f:[0,\infty) \to \CC$ and $g:[0,\infty)\to \RR$ having possibly improper limits at $\infty$, we write
\[
  f\linf:= \lim_{t\to\infty} f(t) \in \CC \cup \{\infty\}, \quad g\linf:= \lim_{t\to\infty} g(t) \in \RR \cup \{\pm\infty\};
\]
if $f\linf$ and $g\linf$ are finite, they will also be used in arithmetic calculations.
\end{notation}

%

\medskip

\noindent
\textbf{Assumption (B).} \
(B1) The possibly improper limit $\dinf$ exists.
\begin{enumerate}
\item[(B2)]
There exist constants $\beta$, $\gamma>0$ such that 
\[
  |b(t)|\le \beta(|d(t)|+1), \ \ |c(t)|\le \gamma(|d(t)|+1) \ \text{  for all } \ t\in [0,\infty).
\]   
\item[(B3a)\!]
For some (and hence all, see Remark \ref{B3a} below) $\la\in \RR \setminus \{\dinf\}$ there exists $t_{\la} \in [0,\infty)$ 
such that $\la\notin d([t_{\la},\infty))$ and
\[
 \tpi(\cdot, \la) \ \text{ is a bounded function on } \ [t_{\la},\infty).
\]
\vspace{-5mm}
\item[(B3b)\!]
For every $\la\in \RR \setminus \big( \overline{\Delta([0,\infty))} \cup \{\dinf\} \big) $ there exists a $t_\la \in [0,\infty)$ 
such that $\la\notin d([t_\la,\infty))$ and
\[
   \trho(\cdot,\la), \ \tka(\cdot,\la), \ \dfrac 1{\tpi(\cdot, \la)} \ \text{ are bounded functions on } \ [t_\la,\infty).
\]
\end{enumerate}


\begin{remark}
If  Assumption (B1) holds, then $\ov{\Delta([0,\infty))}=\RR$ is only possible if $\dinf = +\infty$; 
otherwise, if $\dinf < +\infty$, it is immediate that $\Delta([0,\infty))$ is bounded from above since $p>0$ by Assumption (A).
\end{remark}
 
\begin{remark}
\label{B3a}
If (B3a) holds for some $\la_0 \in \RR \setminus\{ \dinf\}$, 
then the~identity 
\begin{equation}
\label{pila0}
 \tpi(\cdot,\la) -\tpi(\cdot,\la_0) = (\la_0-\la) \frac{|b|^2}{(d-\la)(d-\la_0)} 
\end{equation}
and Assumption (B2) on $b$ ensure that (B3a) holds for all $\la \in \RR \setminus\{ \dinf\}$ if we choose $t_\la \ge t_{\la_0}$.
\end{remark}

\begin{lemma} 
\label{le.AS1}
Suppose that $\Delta([0,\infty))\ne \RR$, and
let $\la\in \RR \setminus \big( \overline{\Delta([0,\infty))} \cup \{\dinf\} \big) $. Then there exists a $t_\la \in (0,\infty)$ such that 
$\la\notin  \ov{d([t_\la,\infty))}$
and, with this $t_\la$,
\begin{equation}
\label{Aequivschur}
  \lambda\in\sess(\cA_{(t_\la,\infty)}) \;\Longleftrightarrow\;
  0\in\sess(S_{(t_\la,\infty)}(\la)).
\end{equation}
\end{lemma}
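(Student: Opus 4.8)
The plan is to reduce the Fredholm property of the matrix operator $\cA_{(t_\la,\infty)}-\la$ to that of its scalar Schur complement $S_{(t_\la,\infty)}(\la)$ on the half-line $J:=(t_\la,\infty)$. First I would fix $t_\la$. Since the limit $\dinf$ exists by (B1) and $\la\ne\dinf$, the function $d-\la$ is bounded away from $0$ for large $t$: if $\dinf$ is finite, I choose $t_\la$ so that $|d(t)-\dinf|<\tfrac12|\la-\dinf|$ on $[t_\la,\infty)$, and if $\dinf=\pm\infty$, I choose $t_\la$ so that $|d(t)|>|\la|+1$ there. In either case $\la\notin\ov{d([t_\la,\infty))}$ and $(d-\la)^{-1}$ is a bounded multiplication operator on $J$. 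Enlarging $t_\la$ by means of (B3a) and (B3b), I may further assume that $\tpi(\cdot,\la)$, $\trho(\cdot,\la)$, $\tka(\cdot,\la)$ and $1/\tpi(\cdot,\la)$ are bounded on $J$; the boundedness of $1/\tpi(\cdot,\la)$ means that the leading coefficient $\tpi(\cdot,\la)$ of $S(\la)$ is bounded away from $0$, so that $S_J(\la)$ is uniformly elliptic with bounded coefficients. Finally, (B2) together with (B1) yields that $b/(d-\la)$ and $c/(d-\la)$ are bounded on $J$, since $|d|/|d-\la|$ is bounded there.

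Next I would record the Frobenius--Schur factorization of $\cA_0-\la$ on the core $C_0^2(J)\oplus C_0^1(J)$,
\[
  \cA_0-\la=\begin{pmatrix} I & B_0(d-\la)^{-1} \\ 0 & I \end{pmatrix}
  \begin{pmatrix} S(\la) & 0 \\ 0 & d-\la \end{pmatrix}
  \begin{pmatrix} I & 0 \\ (d-\la)^{-1}C_0 & I \end{pmatrix},
\]
whose middle factor is block diagonal with entries $S(\la)$ and $d-\la$, and whose outer factors are unipotent and lower, respectively upper, triangular. The bounded invertibility of $d-\la$ on $J$ reduces the Fredholm property of the middle factor to that of $S_J(\la)$ alone. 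The decisive structural point is that the off-diagonal operators $B_0(d-\la)^{-1}$ and $(d-\la)^{-1}C_0$ are first-order differential operators with bounded coefficients, where the boundedness of $b/(d-\la)$ and $c/(d-\la)$ from the first step enters; hence, by the interpolation inequality $\|u'\|\le\eps\|u''\|+C_\eps\|u\|$, they are $S_J(\la)$-bounded with relative bound $0$.

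Finally I would invoke the abstract Schur-complement theory for block operator matrices in \cite[Sections~2.3, 2.4]{CT}: because the $(2,2)$-corner $d-\la$ is boundedly invertible and the coupling is relatively $S_J(\la)$-bounded with bound $0$, the triangular factors preserve the Fredholm property, so that $\cA_J-\la$ is Fredholm if and only if $S_J(\la)$ is Fredholm, which is exactly \eqref{Aequivschur}; throughout one uses $\sess\subset\RR$ and the Fredholm characterization behind Lemma~\ref{propA}. The main obstacle is precisely this last step: the triangular factors above are unbounded, so one cannot simply appeal to invariance of the Fredholm property under bounded bijections. The relative-bound-zero property established in the second step, combined with the bounded invertibility of the corner $d-\la$, is what guarantees that these unbounded unipotent factors nevertheless leave the Fredholm property — and hence membership in the essential spectrum — unchanged. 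Alternatively, the same conclusion can be reached directly through Lemma~\ref{propA} by comparing the boundedness of $(\cA_{0,J}-\la)^{-1}$ and of $S_{0,J}(\la)^{-1}$ on their ranges via the Schur reduction $y_2=(d-\la)^{-1}(g_2-C_0y_1)$ and $S(\la)y_1=g_1-B_0(d-\la)^{-1}g_2$, again using the relative bound to control the first-order cross terms.
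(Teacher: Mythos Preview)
Your proposal is essentially correct and reaches the same conclusion, but the route differs from the paper's.

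The paper does not invoke the abstract Frobenius--Schur machinery from \cite[Sections~2.3, 2.4]{CT}. Instead it works directly with the characterization in Lemma~\ref{propA}: after checking that $\la\notin\spt(\ov{\cA_{0,J}})$ and $0\notin\spt(\ov{S_{0,J}(\la)})$, it proves each implication in \eqref{Aequivschur} by showing that the relevant inverse is bounded on its range. For ``$\Longleftarrow$'' this is immediate from your reduction $v=-(d-\la)^{-1}C_0u$. For ``$\Longrightarrow$'' the paper passes to a self-adjoint extension $\wt S_J(\la)$ with $0\in\rho(\wt S_J(\la))$ (the last clause of Lemma~\ref{propA}), writes out the $2\times2$ inverse explicitly in terms of $\wt S_J(\la)^{-1}$, $(D_0-\la)^{-1}$ and the operator $F(\la)=(D_0-\la)^{-1}C_0|\wt S_J(\la)|^{-1/2}$, and shows $F(\la)$ is bounded by identifying $\cD(|\wt S_J(\la)|^{1/2})$ with a first-order Sobolev space. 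This form-domain argument replaces your interpolation-inequality step; both exploit the same uniform ellipticity of $S_J(\la)$ and the boundedness of $b/(d-\la)$, $c/(d-\la)$ that you established.

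Your approach is more structural and avoids the square-root construction, at the cost of relying on the general Fredholm-preservation results for unipotent perturbations from \cite{CT}; the paper's approach is more self-contained and explicit. The alternative you sketch at the end---comparing boundedness of inverses via Lemma~\ref{propA} and the Schur reduction---is precisely the paper's argument, though you would still need to supply the point-spectrum exclusions and the passage to a self-adjoint extension to make it complete.
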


\begin{proof}
As $\Delta$ is continuous due to Assumption (A), the condition $\Delta([0,\infty))\ne \RR$ implies that 
$\Delta([0,\infty))$ is a proper subinterval of $\RR$. 
The existence of $t_\la$ with $\la\notin  \ov{d([t_\la,\infty))}$ is immediate from Assumption (B1) on the existence of the 
(possibly improper) limit $\dinf$.

In order to prove the equivalence \eqref{Aequivschur},
we fix $\la \in \RR \setminus \big(\overline{\Delta([0,\infty))} \cup  \ov{d([t_\la,\infty))}\big)$ and
we abbreviate $J \defeq (t_\la,\infty)$.  

By the definitions \eqref{clos1}, \eqref{clos2}, we have $\cA_{J}=\ov{\cA_{0,J}}$, $S_J(\la) = \ov{S_{0,J}(\lambda)}$ and hence
we have to prove that 
\begin{equation} \label{hot}
  \lambda\in\sess(\ov{\cA_{0,J}}) \quad\Longleftrightarrow\quad
  0\in\sess\bigl(\ov{S_{0,J}(\lambda)}\bigr).
\end{equation}
First we show that Lemma~\ref{le.essunb} 
can be applied to the operators $\cA_{0,J}$ and $S_{0,J}(\lambda)$.

By Remark \ref{schurdef}, the operator $S_{0,J}(\la)$ is symmetric with deficiency numbers~$\le 2$; moreover, 
$0\notin\spt\bigl(\ov{S_{0,J}(\lambda)}\bigr)$ since every
eigenfunction $y$ of $\ov{S_{0,J}(\lambda)}$ must satisfy the second order differential
equation $S(\lambda)y=0$ and the boundary conditions $y(t_\la)=y'(t_\la)=0$
and hence would have to vanish identically.
By Proposition~\ref{ha}, the operator $\cA_{0,J}$ is symmetric with deficiency numbers $\le 2$.
Further, it is not difficult to check that $\la\in\spt\bigl(\ov{\cA_{0,J}}\bigr)$ 
implies that $0\in\spt\bigl(\ov{S_{0,J}(\lambda)}\bigr)$, which was excluded above. Hence $\la\notin\spt\bigl(\ov{\cA_{0,J}}\bigr)$.

Now Lemma~\ref{le.essunb} implies that $\sess\bigl(\ov{\cA_{0,J}}\bigr) \subset \RR$, $\sess\bigl(\ov{S_{0,J}(\lambda)}\bigr)\subset \RR$ and that we can use the characterization \eqref{sept13} for both sets.

``$\Longleftarrow$" in \eqref{hot}:
Assume that $\lambda\notin\sess\bigl(\ov{\cA_{0,J}}\bigr) $. 
By \eqref{sept13}, the claim is proved if we show that $S_{0,J}(\la)^{-1}$ is bounded on 
$\ran S_{0,J}(\la)$. Let $f\in\ran S_{0,J}(\la)$, $f= S_{0,J}(\lambda)u$ with $u\in  C_0^2(J)$.
By Assumption (A), we have
\[
  v \defeq -\frac1{d-\lambda}\Bigl(b\frac{\d}{\d t}+c\Bigr)u \ \in C_0^1(J).
\]
Hence $(u,v)^{\rm t} \in C^2_0(J) \oplus C^1_0(J) = \cD(\cA_{0,J})$ and, by the definition of $v$,
\[
  \bigl(\cA_{0,J}-\lambda\bigr)\vect{u}{v}=\vect{S_{0,J}(\lambda)u}{0}=\vect{f}{0}.
\]
Since $\lambda\!\notin\!\sess\bigl(\ov{\cA_{0,J}}\bigr)$, we know that $(\cA_{0,J}\!-\!\la)^{-1}\!$ is bounded on $\ran(\cA_{0,J}\!-\!\la)$ by \eqref{sept13}.
If $P_1$ denotes the projection onto the first component in $L^2(J)^2$, then 
\[ 
  S_{0,J}(\lambda)^{-1}f = u = P_1 (\cA_{0,J}-\la)^{-1} \begin{pmatrix}{f}\\{0}\end{pmatrix}. 
\]
Hence $S_{0,J}(\la)^{-1}$ is bounded on $\ran S_{0,J}(\la)$ and so $0\notin\sess\bigl(\ov{S_{0,J}(\lambda)}\bigr)$ by \eqref{sept13}.

``$\Longrightarrow$" in \eqref{hot}:
Suppose that $0\notin\sess\bigl(\ov{S_{0,J}(\lambda)}\bigr)$. Then, by Lemma~\ref{le.essunb}, there exists a
self-adjoint extension $\wt S_{J}(\lambda)$ of $S_{0,J}(\lambda)$ such that $0\in\rho(\wt S_{J}(\lambda))$.
By \eqref{sept13}, the claim is proved if we show that $(\cA_{0,J}-\la)^{-1}$ is bounded on $\ran(\cA_{0,J}-\la)$.
Let $(f,g)^{\rm t} \in \ran(\cA_{0,J}-\la)$, 
\begin{equation}
\label{nov18}
\begin{pmatrix}{f}\\{g}\end{pmatrix} =(\cA_{0,J}-\lambda)\begin{pmatrix}{u}\\{v}\end{pmatrix} 
\ \iff \
\begin{aligned}
  (A_0-\lambda)u+B_0 v &= f \\
  C_0 u+(D_0-\lambda)v &= g
\end{aligned}
\end{equation}
with $(u,v)^{\rm t} \in \cD(\cA_{0,J}) = C^2_0(J) \oplus C^1_0(J)$. The latter and Assumption (A) imply that 
$(D_0-\la)^{-1}C_0u \in C^1_0(J)$, and the second equation shows that $(D_0-\la)^{-1}g=(D_0-\la)^{-1}C_0u+v\in C_0^1(J)$.
Solving the second equation for $v$, we can thus substitute $v=-(D_0-\lambda)^{-1}C_0u+(D_0-\lambda)^{-1}g$ into the first equation to obtain
\[
   (A_0-\lambda)u-B_0(D_0-\lambda)^{-1}C_0u=f-B_0(D_0-\lambda)^{-1}g.
\]
Since the left hand side equals $S_{0,J}(\la)u = \wt S_J(\la) u$ and $\wt S_J(\la)$ is boundedly invertible, it follows that
\[
  u=\wt S_J(\lambda)^{-1}f-\wt S_J(\lambda)^{-1}B_0(D_0-\lambda)^{-1}g.
\]
Inserting this back into the above formula for $v$, we find that
\[
  v=-(D_0-\lambda)^{-1}C_0\wt S_J(\lambda)^{-1}\!f+(D_0-\lambda)^{-1}g
  +(D_0-\lambda)^{-1}C_0\wt S_J(\lambda)^{-1}\!B_0(D_0-\lambda)^{-1}g.
\]
If we use the decomposition $\wt S_J(\lambda)^{-1} \!\!=\! |\wt S_J(\lambda)|^{-1/2} \sign \wt S_J(\lambda) |\wt S_J(\lambda)|^{-1/2}$,
observe that $B_0 = C_0^*|_{C_0^1(J)}$ and introduce the operator
\begin{align*} 
  F(\lambda) \defeq (D_0-\lambda)^{-1}C_0|\wt S_J(\lambda)|^{-\frac 12},
\end{align*}
it follows that
\[
  F(\la)^* \supset |\wt S_J(\lambda)|^{-\frac 12} B_0 (D_0-\lambda)^{-1}.  
\] 
Hence the above relations for $u$ and $v$ can be written in matrix form as
\[
 \begin{pmatrix}u\\[4mm] v   \!\end{pmatrix}\!\!=\!\!
  \begin{pmatrix}\!
  \wt S_J(\lambda)^{-1}
  \!&\! -|\wt S_J(\lambda)|^{-\frac 12}\sign\!\wt S_J(\lambda)\,F(\lambda)^* \\[2ex]
  -F(\lambda)\sign\!\wt S_J(\lambda)\,|\wt S_J(\lambda)|^{-\frac 12}
  \!&\! (D_0\!-\!\lambda)^{-1}\!+\!F(\lambda)\sign\!\wt S_J(\lambda)\,F(\lambda)^*
  \!\end{pmatrix}
  \!\!\!\begin{pmatrix}\!f\\[4mm]  g \end{pmatrix}\!.
\]
By \eqref{nov18}, the above block operator matrix is equal to $(\cA_{0,J}-\la)^{-1}$.
If we show that $F(\la)$ is a bounded operator in $L^2(J)$, then all entries in this operator matrix are bounded in
$L^2(J)$ and hence $(\cA_{0,J}-\lambda)^{-1}$ is bounded on $\ran(\cA_{0,J}-\la)$, which implies that $\lambda\notin\sess(\ov{\cA_{0,J}})$ by \eqref{sept13}.

By Assumption (B2) and since $\la\notin\ov{d(J)}$, we have 
\[
  \left| \frac{b(t)}{d(t)-\la} \right| \le \beta \frac{|d(t)|+1}{|d(t) - \la|},  \quad 
  \left| \frac{c(t)}{d(t)-\la} \right| \le \gamma \frac{|d(t)|+1}{|d(t) - \la|},
  \quad t\in \ov{J}.
\]
Therefore the functions on the left hand sides are bounded on $\ov{J}$ and hence
the closure of the first order differential operator
\[
  (D_0-\lambda)^{-1}C_0=\frac{b}{d-\lambda}\frac{\d}{\d t}+\frac{c}{d-\lambda}
\]
is bounded from the first order Sobolev space $W^{1,2}(J)$ to $L^2(J)$. 

By Assumptions (B3a), (B3b), the functions $\tpi(\cdot,\lambda)$, $\dfrac 1{\tpi(\cdot,\la)}$, $\trho(\cdot,\lambda)$, and $\tka(\cdot,\lambda)$,
are bounded on $J$ and hence the self-adjoint realization $\wt S_J(\la)$ of the differential expression
\[
  S(\la) = \tpi(\cdot,\la)
  \Big( - \dfrac{\d^2}{\d t^2} +\dfrac{\trho(\cdot,\la)}{\tpi(\cdot,\la)}\, \ii\dfrac{\d}{\d t} + \dfrac{\tka(\cdot,\la)}{\tpi(\cdot,\la)} \Big) 
\]
is bounded from the second order Sobolev space $W^{2,2}(J)$ to $L^2(J)$.
Thus the domain $\cD(|\wt S_J(\lambda)|^{1/2})$ (which is equal to
the form domain of $\wt S_J(\lambda)$) is either
$W^{1,2}(J)$ or $W^{1,2}_0(J)$ (the closure of $C_0^\infty(J)$ in $W^{1,2}(J))$, and the operator
$|\wt S_J(\lambda)|^{-1/2}$ is bounded from $L^2(J)$ to
$W^{1,2}(J)$. 

Altogether this implies that  $F(\la) \subset \ov{(D_0-\lambda)^{-1}C_0} |\wt S_J(\lambda)|^{-1/2}$ is a bounded operator in $L^2(J)$. 
\end{proof}


\section{Main result}
\label{sec2b}

The description of the singular part of the essential spectrum in our main result depends only on the limits at $\infty$ of certain functions formed out of the coefficients of the original operator matrix $\cA_0$. 
Here the following assumption is crucial.

\medskip

\noindent
\textbf{Assumption (C).}
\
For $\la\in\RR\setminus \big( \ov{\Delta([0,\infty))} \cup \{ \dinf\} \big)$ the following limits exist and are finite:
\[
  \Big( \dfrac{\trho(\cdot,\la)}{\tpi(\cdot,\la)} \Big)\blinf =\lim_{t\to\infty}\dfrac{\trho(t,\la)}{\tpi(t,\la)},\quad 
  \Big( \dfrac{\tka(\cdot,\la)}{\tpi(\cdot,\la)} \Big)\blinf  =\lim_{t\to\infty}\dfrac{\tka(t,\la)}{\tpi(t,\la)}.
\]

\smallskip

Observe that,  if $\la\ne \dinf$, the functions $\tpi(t,\la)$, $\trho(t,\la)$, and $\tka(t,\la)$ given by \eqref{pirho}, \eqref{kappa}  
are defined for all sufficiently large $t\in [0,\infty)$. 

\smallskip

We emphasize that we do not require that the limits $\tpi(\cdot,\la)\linf$, $\trho(\cdot,\la)\linf$, $\tka(\cdot,\la)\linf$ of the denominator and the numerators in Assumption (C) exist separately; this particular case will be studied in the next~section.

\begin{remark}
\label{last!}
It is easy to see that Assumptions (B3a) and (C) imply (B3b).
\end{remark}

\smallskip

Although the coefficient functions $\trho(\cdot,\la)$ and $\tka(\cdot,\la)$ may be complex-valued for real $\la$, 
the fact that they originate from the symmetric differential expression $S(\la)$ (for real $\la$) and the existence 
of the limits in Assumption (C) above have the following implications.


\begin{lemma}
\label{rem-orif}
If Assumptions {\rm (A)}, {\rm (B)}, and {\rm (C)} hold, then
for $\la\in\RR\setminus \big( \ov{\Delta([0,\infty))} $ $\cup \{ \dinf\} \big)$ the following limits exist, are finite and satisfy
\begin{equation}
\label{llimits}
\Big( \dfrac{ \frac{\partial}{\partial t} \tpi(\cdot,\la)}{\tpi(\cdot,\la)} \Big)\blinf=0, \quad
\Big( \dfrac{\trho(\cdot,\la)}{\tpi(\cdot,\la)} \Big)\blinf \in \RR, \quad
\Big( \dfrac{\tka(\cdot,\la)}{\tpi(\cdot,\la)} \Big)\blinf \in \RR. 
\end{equation}
%
\end{lemma}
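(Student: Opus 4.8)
The plan is to treat the three limits in \eqref{llimits} separately, with the whole argument resting on one elementary observation. First I would fix $\la$ in the indicated set. By Remark~\ref{last!}, Assumptions (B3a) and (C) imply (B3b), so there is a $t_\la\in[0,\infty)$ with $\la\notin d([t_\la,\infty))$ such that both $\tpi(\cdot,\la)$ and $1/\tpi(\cdot,\la)$ are bounded on $[t_\la,\infty)$. Hence $\tpi(\cdot,\la)$ is bounded above and bounded away from $0$ there; being continuous and nonvanishing, it keeps a constant sign on $[t_\la,\infty)$. The elementary observation I would isolate and then use twice is this: if $h\in C^1([t_\la,\infty),\RR)$ is such that $h/\tpi(\cdot,\la)$ is bounded and $h'/\tpi(\cdot,\la)$ has a finite limit $L$ as $t\to\infty$, then $L=0$. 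Indeed, $h=\tpi(\cdot,\la)\cdot\bigl(h/\tpi(\cdot,\la)\bigr)$ is then bounded, whereas if $L\neq 0$ the two-sided bound on $|\tpi(\cdot,\la)|$ forces $h'$ to have eventually constant sign with $|h'|$ bounded below by a positive constant, so that $h$ would be unbounded — a contradiction.

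For the first two limits I would use the definition \eqref{pirho} of $\trho$ to split, for real $\la$,
\[
  \frac{\trho(\cdot,\la)}{\tpi(\cdot,\la)}
  = \frac{-2\Im(b\,\ov{c})/(d-\la)}{\tpi(\cdot,\la)}
  + \ii\,\frac{\frac{\partial}{\partial t}\tpi(\cdot,\la)}{\tpi(\cdot,\la)},
\]
where the first summand is real and the second purely imaginary. By Assumption (C) the left-hand side converges to a finite limit, so its imaginary part $\bigl(\tfrac{\partial}{\partial t}\tpi(\cdot,\la)\bigr)/\tpi(\cdot,\la)$ converges; applying the observation with $h=\tpi(\cdot,\la)$ gives that this limit is $0$, which is the first identity in \eqref{llimits}. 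Consequently $\Im\bigl(\trho(\cdot,\la)/\tpi(\cdot,\la)\bigr)\to 0$, so the finite limit $\bigl(\trho(\cdot,\la)/\tpi(\cdot,\la)\bigr)\blinf$ coincides with the limit of its real part and is therefore real, which is the second claim.

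For the third limit I would compute the imaginary part of $\tka$. Since $q$, $\la$, $d$ and $|c|^2$ are real, \eqref{kappa} gives $\Im\tka(\cdot,\la)=\tfrac{\partial}{\partial t}\Im\bigl(\ov{b}\,c/(d-\la)\bigr)=\tfrac{\partial}{\partial t}\bigl(\Im(\ov{b}\,c)/(d-\la)\bigr)$. Using $\Im(\ov{b}\,c)=-\Im(b\,\ov{c})$ and setting $g:=-2\Im(b\,\ov{c})/(d-\la)=\Re\trho(\cdot,\la)$, this reads $\Im\tka(\cdot,\la)=\tfrac12 g'$. By Assumption (C), $\tka(\cdot,\la)/\tpi(\cdot,\la)$ converges, hence so does its imaginary part $g'/\bigl(2\tpi(\cdot,\la)\bigr)$; moreover $g/\tpi(\cdot,\la)=\Re\bigl(\trho(\cdot,\la)/\tpi(\cdot,\la)\bigr)$ is bounded, again by Assumption (C). Applying the observation with $h=g$ yields $g'/\tpi(\cdot,\la)\to 0$, so $\Im\bigl(\tka(\cdot,\la)/\tpi(\cdot,\la)\bigr)\to 0$ and the finite limit $\bigl(\tka(\cdot,\la)/\tpi(\cdot,\la)\bigr)\blinf$ is real, which is the third claim.

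The main obstacle is precisely that convergence of $h/\tpi(\cdot,\la)$ does not by itself force its derivative to tend to $0$ — the derivative of a convergent function need not vanish. What makes the argument work is that Assumption (C) already delivers the convergence of the relevant derivative quotients $(\partial_t\tpi)/\tpi$ and $g'/(2\tpi)$, namely as the imaginary parts of the quantities appearing in (C), and that these are genuine total derivatives. Combining this convergence with the two-sided bound on $|\tpi(\cdot,\la)|$ coming from (B3a) and (B3b) is exactly what rules out a nonzero limit. Recognizing that the two awkward imaginary parts are $\partial_t\tpi$ and $\tfrac12 g'$, and that the needed boundedness is available, is the heart of the proof; the remainder is the short monotonicity-versus-boundedness argument recorded in the observation.
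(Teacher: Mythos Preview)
Your proof is correct and follows essentially the same path as the paper's: both identify $\Im(\trho/\tpi)=\partial_t\tpi/\tpi$ and $\Im\tka=\tfrac12\,\partial_t(\Re\trho)$, and both exploit the two-sided bound on $|\tpi(\cdot,\la)|$ to rule out a nonzero limit.

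The one notable difference is packaging. The paper invokes Gronwall's lemma (exponential growth of $\tpi$ if $\gamma(\la)\ne0$) for the first claim, and for the third claim it rewrites $\partial_t(\Re\trho)/\tpi$ via the product rule as $\partial_t\bigl(\Re(\trho/\tpi)\bigr)+\Re(\trho/\tpi)\cdot\partial_t\tpi/\tpi$, then uses that a convergent function whose derivative also converges must have derivative tending to $0$. Your single ``observation'' --- if $h/\tpi$ is bounded and $h'/\tpi$ has a finite limit, that limit is $0$ --- subsumes both of these steps (apply it with $h=\tpi$ and then with $h=\Re\trho$), so your argument is slightly more unified and avoids the product-rule detour. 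Either version works; yours is marginally cleaner.
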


\vspace{-4mm}

\begin{proof}
Let $\la\in \RR \setminus \big( \overline{\Delta([0,\infty))} \cup \{\dinf\} \big)$ be arbitrary.

By Assumptions (B3a), (B3b), there is a $t_{\lambda}>0$ such that $\la\notin d([t_\la,\infty))$ and the functions 
$\pi(t,\lambda)$, $\frac{1}{\pi(t,\lambda)}$ are bounded on $[t_{\lambda}, \infty)$.
It follows from the particular form of the coefficient $\trho(\cdot,\la)$ in \eqref{pirho} and Assumption (C) that the~limits 
\begin{align}
\label{cla}
\gamma(\lambda):=\Big( \dfrac{ \frac{\partial}{\partial t} \tpi(\cdot,\la)}{\tpi(\cdot,\la)} \Big)\blinf
= \Imag \Big( \dfrac{\trho(\cdot,\la)}{\tpi(\cdot,\la)} \Big)\blinf
\end{align}
coincide, exist and are finite.
If $\gamma(\lambda)>0$, then there is a $t'_{\lambda} \in [t_{\lambda},\infty)$ with $\frac{\frac{\D}{\D t}\pi(t,\lambda)}{\pi(t,\lambda)}\geq \frac{1}{2}\gamma(\lambda)$ for $t\in [t'_{\lambda}, \infty)$. Hence Gronwall's Lemma implies that 
\begin{align*}
\pi(t,\lambda)\geq \pi(t'_{\lambda},\lambda)\cdot\exp\Bigl(\frac{1}{2}(t-t'_{\lambda})\gamma(\lambda)\Bigr), \quad t\in [t'_{\lambda}, \infty),
\end{align*}
contradicting the fact that $\pi(t,\lambda)$ is bounded on $[t_{\lambda},\infty)$. 
Similarly, if $\gamma(\lambda)<0$, we obtain a contradiction to the boundedness of $\frac{1}{\pi(t,\lambda)}$ on $[t_{\lambda}, \infty)$. 
Therefore, $\gamma(\lambda)=0$  and hence by \eqref{cla} the claims for the first two limits in \eqref{llimits} are~proved.

The particular form of the coefficients $\trho(\cdot,\la)$, $\tka(\cdot,\la)$ in \eqref{pirho}, \eqref{kappa} and the fact that $\pi(\cdot,\la)$ is real-valued imply that, for $t\in [t_\la,\infty)$,
\begin{align}
\label{imka}
  \hspace{-4mm}
  \Imag \frac{\tka(t,\lambda)}{\tpi(t,\la)} & = \dfrac 12 \frac{\frac{\D}{\D t} \Real \trho(t,\lambda)}{\tpi(\cdot,\la)} \\
\label{imka1}
  & = \frac 12 \bigg( \frac{\D}{\D t} \bigg(\!\Real\Bigl(\frac{\trho(t,\lambda)}{\tpi(t,\lambda)}\Bigr)\bigg) \!+\! 
  \Real \Bigl(\frac{\trho(t,\lambda)}{\tpi(t,\lambda)} \Bigr) \frac{\frac{\D}{\D t} \tpi(t,\lambda)}{\tpi(t,\la)}
  \bigg). \hspace{-4mm}
\end{align}
According to Assumption (C) and \eqref{cla}, the limits at $\infty$ of the function on the left hand side of \eqref{imka} and of the two factors of the second term  in \eqref{imka1} exist. Hence also the limit of the first term in \eqref{imka1} exists and, by the first equality in~\eqref{llimits},
\begin{align}
\label{imka2}
 \Imag \Big( \dfrac{\tka(\cdot,\la)}{\tpi(\cdot,\la)} \Big)\blinf 
 \!\!=  \dfrac 12  \bigg( \frac{\D}{\D t} \left(\Real\Bigl(\frac{\trho(\cdot,\lambda)}{\tpi(\cdot,\lambda)}\Bigr)\bigg) \right)\blinf\!\!.
\end{align}
Since by Assumption (C) the limit $\Big(\Real\Bigl(\frac{\trho(\cdot,\lambda)}{\tpi(\cdot,\lambda)}\Bigr)\Big)\blinf$ exists, the limit on the right hand side of \eqref{imka2} has to be equal to $0$, which proves the third claim. 
\end{proof}


The following theorem, which is the main result of this paper, contains an explicit
description of the essential spectrum of the (closure of the) matrix differential operator in \eqref{A0}. 
It consists of a regular part determined by the behaviour of the coefficients within the interval $[0,\infty)$,
and of a singular part determined by the behaviour  at $\infty$ of certain functions of the coefficients and some of their derivatives. 

\medskip


\begin{theorem} 
\label{ess_spec1}
Suppose that Assumptions {\rm (A)}, {\rm (B)}, and {\rm (C)} are satisfied. 
Then the essential spectrum of every closed symmetric extension $\cA$ of the operator~$\cA_0$ in \eqref{A0} is given~by
\[
  \sess\bigl(\cA\bigr) \setminus\{\dinf\}
  = \big( \sessreg\bigl(\cA\bigr)\cup \sesssing\bigl(\cA\bigr)   \big) \, \setminus \{\dinf\}
\]
where
\begin{align*}
  \sessreg\bigl(\cA\bigr)
  &\!:=\! \ov{\Delta\bigl([0,\infty)\bigr)}, \\ 
  \sesssing\bigl(\cA\bigr)
  &\!:=\!\! \biggl\{\!\la\!\in\!\RR\!\setminus\! \big( \ov{\Delta([0,\infty))} \!\cup\! \{\dinf\} \big)\!:\!
    \bigg( \dfrac{\trho(\cdot,\la)}{\tpi(\cdot,\la)} \bigg)\blinf^2\!\!-4 \bigg( \dfrac{\tka(\cdot,\la)}{\tpi(\cdot,\la)} \bigg)\blinf \!\!\!\ge 0
    \biggr\}
\vspace{-5mm}    
\end{align*}
with $\Delta$ given by \eqref{Delta} and $\tpi(\cdot,\la)$, $\trho(\cdot,\la)$, and $\tka(\cdot,\la)$ defined as in \eqref{pirho}, \eqref{kappa}.
\end{theorem}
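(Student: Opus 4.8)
The plan is to use that $\sess(\cA)=\sess(\ov{\cA_0})\subset\RR$ for every closed symmetric extension $\cA$ (Proposition~\ref{ha} together with Lemma~\ref{le.essunb}), so that only real $\la$ need be considered, and then to split $\RR$ into the regular set $\ov{\Delta([0,\infty))}$ and its complement. If $\ov{\Delta([0,\infty))}=\RR$ there is nothing to do, since Proposition~\ref{regpart} already gives $\sess(\cA)=\RR=\sessreg(\cA)$; hence I assume $\Delta([0,\infty))\ne\RR$. For the regular part, Proposition~\ref{regpart} provides $\ov{\Delta([0,\infty))}=\sessreg(\cA)\subset\sess(\cA)$, so the entire content left to prove is that, for $\la\in\RR\setminus\bigl(\ov{\Delta([0,\infty))}\cup\{\dinf\}\bigr)$, one has $\la\in\sess(\cA)\iff\la\in\sesssing(\cA)$.

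Fix such a $\la$ and take $t_\la$ as in Lemma~\ref{le.AS1}. By Glazman's decomposition \eqref{glaz}, $\sess(\cA)=\sess(\cA_{(0,t_\la)})\cup\sess(\cA_{(t_\la,\infty)})$, and since $\sess(\cA_{(0,t_\la)})=\Delta([0,t_\la])\subset\ov{\Delta([0,\infty))}$ by \eqref{ALMS}, the chosen $\la$ lies in $\sess(\cA)$ if and only if it lies in $\sess(\cA_{(t_\la,\infty)})$. Lemma~\ref{le.AS1} then turns this into the scalar statement $0\in\sess(S_{(t_\la,\infty)}(\la))$, so the theorem reduces entirely to locating $0$ relative to the essential spectrum of the Schur complement on the half-line $(t_\la,\infty)$.

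To compute $\sess(S_{(t_\la,\infty)}(\la))$ I would first factor out the leading coefficient. Using \eqref{S}, write $S(\la)=\tpi(\cdot,\la)\,\tilde L(\la)$ with
\[
  \tilde L(\la)=-\frac{\d^2}{\d t^2}+\frac{\trho(\cdot,\la)}{\tpi(\cdot,\la)}\,\ii\frac{\d}{\d t}+\frac{\tka(\cdot,\la)}{\tpi(\cdot,\la)}.
\]
Because multiplication $M_\tpi$ by $\tpi(\cdot,\la)$ is bounded with bounded inverse on $(t_\la,\infty)$ (Assumptions (B3a), (B3b)), one has $S_{(t_\la,\infty)}(\la)=M_\tpi\,\tilde L_{(t_\la,\infty)}(\la)$ after passing to closures; composition with a boundedly invertible operator preserves the Fredholm property, so $0\in\sess(S_{(t_\la,\infty)}(\la))\iff 0\in\sess(\tilde L_{(t_\la,\infty)}(\la))$. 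By Lemma~\ref{rem-orif} the coefficients of $\tilde L(\la)$ converge at $\infty$ to the \emph{real} constants $\rho_\infty:=(\trho(\cdot,\la)/\tpi(\cdot,\la))\blinf$ and $\kappa_\infty:=(\tka(\cdot,\la)/\tpi(\cdot,\la))\blinf$, i.e.\ $\tilde L(\la)$ has asymptotically constant coefficients. The Edmunds--Evans theorem then delivers $\sess(\tilde L_{(t_\la,\infty)}(\la))$ as the range of the limiting symbol $\xi\mapsto\xi^2-\rho_\infty\xi+\kappa_\infty$, $\xi\in\RR$, which—$\rho_\infty,\kappa_\infty$ being real—equals the half-line $[\kappa_\infty-\rho_\infty^2/4,\infty)$; equivalently, the gauge substitution $y=\e^{\ii\rho_\infty t/2}z$ removes the first-order term in the limiting operator and reduces it to $-\d^2/\d t^2+(\kappa_\infty-\rho_\infty^2/4)$. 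Hence $0$ lies in this set exactly when $\kappa_\infty-\rho_\infty^2/4\le 0$, that is, when $\rho_\infty^2-4\kappa_\infty\ge 0$, which is precisely the defining inequality of $\sesssing(\cA)$. Chaining the three equivalences gives $\la\in\sess(\cA)\iff\la\in\sesssing(\cA)$ off $\ov{\Delta([0,\infty))}\cup\{\dinf\}$, and combining this with the regular part yields the asserted identity modulo $\{\dinf\}$.

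I expect the genuine difficulty to be the rigorous invocation of the Edmunds--Evans theorem: one must check that its hypotheses are met by $\tilde L(\la)$ even though its coefficients are complex-valued for finite $t$ (only their limits are real, by Lemma~\ref{rem-orif}), and one must justify that the asymptotically constant imaginary first-order term shifts the bottom of the essential spectrum by exactly $-\rho_\infty^2/4$. By contrast, the two bookkeeping reductions—passing from $S(\la)$ to $\tilde L(\la)$ via $M_\tpi$, and restricting to the half-line $(t_\la,\infty)$—are routine once the boundedness of $\tpi(\cdot,\la)^{\pm1}$ and the equivalences in Lemmas~\ref{le.AS1} and~\ref{le.essunb} are available, and they do not move the point $0$ within the essential spectrum.
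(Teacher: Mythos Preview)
Your proposal is correct and follows essentially the same route as the paper: reduce to $\la\notin\ov{\Delta([0,\infty))}\cup\{\dinf\}$, use Glazman plus Lemma~\ref{le.AS1} to pass to $0\in\sess(S_{(t_\la,\infty)}(\la))$, divide out the bounded-with-bounded-inverse leading coefficient $\tpi(\cdot,\la)$, and then apply the Edmunds--Evans result for operators with asymptotically constant coefficients to obtain the parabolic range $\{\xi^2+\rho_\infty\xi+\kappa_\infty:\xi\in\RR\}$, whence the discriminant condition. Your identification of the delicate step is also accurate: the paper invokes \cite[Corollary~IX.9.4]{MR89b:47001} and has to note that its standing hypothesis $a_1'\in L_\infty$ fails here (only $a_1'\in L_{\infty,\mathrm{loc}}$ is available from Assumption~(A)), appealing to a closer reading of the proofs in \cite[Section~IX.9]{MR89b:47001} to justify that local boundedness suffices; the complex-valuedness of the coefficients for finite $t$ is not an obstruction in that framework.
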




\begin{proof}
If $\Delta([0,\infty))= \RR$, then $\sess(\cA) = \RR$ by Proposition \ref{regpart} and there is nothing left to prove.

Now suppose that $\Delta([0,\infty))\ne \RR$ and let $\la \notin \big( \ov{\Delta([0,\infty))} \cup \{ \dinf\} \big)$.
Then, by Lemma~\ref{le.AS1}, there exists $t_\la \in (0,\infty)$ such that 
$\la\notin \big( \ov{\Delta([0,\infty))} \cup \ov{d([t_\la,\infty))}\big)$.
In particular, by \eqref{ALMS}, $\la \notin \overline{\Delta([0,t_\la])} 
= \sess(\cA_{(0,t_\la)})$. Hence Glazman's decomposition principle and \eqref{Aequivschur} imply that
\begin{align}
\label{sess-equiv}
 \la \in \sess(\cA) \  \iff \ \la \in \sess(\cA_{(t_\la,\infty)}) 
                       \iff \ 0 \in \sess(S_{(t_\la,\infty)}(\la)).
\end{align}
%
By Assumptions (B3a), (B3b), the functions $\tpi(\cdot,\la)$ and $\!\frac 1{\tpi(\cdot,\la)}$ are bounded on $[t_\la,\infty)$
and hence
\[
  0\in \sess(S_{(t_\la,\infty)}(\la)) \ \iff \ 0\in  \sess\Big( \frac 1{\tpi(\cdot,\la)} S_{(t_\la,\infty)}(\la) \Big).
\]
By Assumptions (A) and (C), the differential operator
\begin{equation}
  \dfrac 1{\tpi(\cdot,\la)}S_{(t_\la,\infty)}(\la)
  = - \dfrac{\d^2}{\d t^2} +\dfrac{\trho(\cdot,\la)}{\tpi(\cdot,\la)}\, \ii\dfrac{\d}{\d t} + \dfrac{\tka(\cdot,\la)}{\tpi(\cdot,\la)}
 \end{equation}
satisfies the conditions of \cite[Corollary~IX.9.4]{MR89b:47001} with $m\!=\!2$, $a_2\!=\!-1$, $a_1\!=\!\frac{\trho(\cdot,\la)}{\tpi(\cdot,\la)}$, and 
$a_0=\frac{\tka(\cdot,\la)}{\tpi(\cdot,\la)}$, except for $a_1' \in L_{\infty}(I)$ in \cite[p.\ 445, (iii)]{MR89b:47001}. However, a closer look at the proofs in
\cite[Section IX.9]{MR89b:47001} shows that it is enough to assume that $a_1' \in L_{\infty,{\it loc}}(I)$ therein.\footnote{We thank W.D.\ Evans for this personal communication.} The latter is guaranteed,  in our case, by Assumption (A) which ensures that $a_1'$ is continuous. 
Therefore \cite[(9.19)]{MR89b:47001} for $k=3$ applies and yields that
\begin{equation}
\label{qe}
  \sess\left(\dfrac 1{\tpi(\cdot,\la)}S_{(t_\la,\infty)}(\la)\right)
  =\left\{\xi^2+\Big( \dfrac{\trho(\cdot,\la)}{\tpi(\cdot,\la)} \Big)\blinf \xi
   + \Big( \dfrac{\tka(\cdot,\la)}{\tpi(\cdot,\la)} \Big)\blinf : \xi\in\RR\right\}.
\end{equation}
Hence 
\begin{align}
  0\in\sess(S_{(t_\la,\infty)}(\la)) \iff \exists\,\xi \in \RR: \xi^2+\Big( \dfrac{\trho(\cdot,\la)}{\tpi(\cdot,\la)} \Big)\blinf \xi
   + \Big( \dfrac{\tka(\cdot,\la)}{\tpi(\cdot,\la)} \Big)\blinf =0.
\end{align}  
By Lemma \ref{rem-orif}, the coefficients of the above quadratic equation are both real and hence a real solution $\xi$ exists if and only if
the corresponding discriminant $\Dis(\la)$ is non-negative,~i.e.
\begin{equation}
\label{def-discr}
  \Dis(\la)
  := \Big( \dfrac{\trho(\cdot,\la)}{\tpi(\cdot,\la)} \Big)\blinf^2\!\!-4 \Big( \dfrac{\tka(\cdot,\la)}{\tpi(\cdot,\la)} \Big)\blinf  \ge 0. 
  \qedhere
\vspace{2mm}  
\end{equation}
\end{proof}

\section{The form of the essential spectrum}
\label{sec3}

In this section we describe the regular part $\sessreg\bigl(\cA\bigr)$  and the singular part $\sesssing\bigl(\cA\bigr)$
of the essential spectrum of $\cA$. 
For the singular part we consider the special case of Assumption (C) where the limits of the functions 
$\tpi(\cdot,\la)$, $\trho(\cdot,\la)$, and $\tka(\cdot,\la)$ exist separately.   



\smallskip

Throughout this section we assume that Assumptions (A), (B1), (B2), (B3a), and (B3b) on the functions $p$, $q$, $b$, $c$, and $d$ are satisfied;
in particular, the possibly improper limit $\dinf=\lim_{t\to\infty}\,d(t)$ exists by (B1). 

\medskip

\noindent
{\bf 5.1. The regular part.}
By Theorem \ref{ess_spec1} the regular part of the essential spectrum of $\cA$ is the 
closure of the range of the function $\Delta$ defined in \eqref{Delta},
\[
   \Delta(t) \defeq d(t)-\frac{|b(t)|^2}{p(t)}, \quad t\in[0,\infty).
\]
The range of $\Delta$ is an interval since $d$, $b$, $p$ are continuous and $p>0$ by (A).
Let
\begin{equation}
\label{depm}
  \de_-:=\inf_{t\in[0,\infty)} \!\Delta(t), \quad
  \de_+:=\sup_{t\in[0,\infty)} \!\Delta(t).
\end{equation}

\begin{proposition} 
\label{prop2:regpart}
Assume that Assumptions {\rm (A)}, {\rm (B1)}, {\rm (B2)}, {\rm (B3a)}, and {\rm (B3b)} are satisfied. 

\begin{enumerate}
\item[{\rm i)}]
If $\,\dinf \in \RR$, then $\sessreg(\cA)$ is bounded from above, i.e.\ $\delta_+\in\RR$, and 
\[
 \sessreg(\cA)=
 \begin{cases}
 \hspace{3mm} [\delta_-,\delta_+] \textnormal{ with } \delta_-\!\in\RR & \textnormal{if } \ \liminf\limits_{t\to\infty}\, p(t)>0,\\
 (-\infty,\delta_+] & \textnormal{if } \ \liminf\limits_{t\to\infty}\,  p(t)=0.
 \end{cases}
\]
\item[{\rm ii)}] If $\,\dinf=+\infty$, then 
\[
  \sessreg(\cA)  = 
  \
  [\de_-,\de_+] 
  \textnormal{ \ \ if } \,\liminf_{t\to\infty} \Big( \dfrac{p}{d}\Big)(t)>0 
\]  
and, in the case  $\Delta([0,\infty)) \ne \RR$,
\begin{align*}
  \sessreg(\cA) & = 
  \begin{cases} 
  [s,+\infty) 
  & \textnormal{if } \,\liminf\limits_{t\to\infty} \Big( \dfrac{p}{d}\Big) (t)=0, \ \liminf\limits_{t\to\infty} \Big( p-\dfrac{|b|^2}{d}\Big)(t)>0, \\[2.5mm]
  (-\infty,s] 
  & \textnormal{if } \,\liminf\limits_{t\to\infty} \Big( \dfrac{p}{d}\Big) (t)=0, \ \limsup\limits_{t\to\infty} \Big( p-\dfrac{|b|^2}{d}\Big)(t)<0.
  \end{cases}
\end{align*}

%
%
\item[{\rm iii)}] If $\,\dinf=-\infty$, then 
\[
\sessreg(\cA)=(-\infty,\delta_+].
\]  
\end{enumerate}
\end{proposition}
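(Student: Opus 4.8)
The plan is to analyze the range of the continuous function $\Delta(t)=d(t)-|b(t)|^2/p(t)$ by studying its limiting behaviour as $t\to\infty$, distinguishing the three cases according to the value of $\dinf$. Since $\Delta$ is continuous on $[0,\infty)$ with $\Delta(0)=d(0)-|b(0)|^2/p(0)$ finite, its range is a (possibly unbounded) interval containing this value; the endpoints $\delta_\pm$ from \eqref{depm} determine $\sessreg(\cA)=\ov{\Delta([0,\infty))}=[\delta_-,\delta_+]$, with the obvious modifications if $\delta_-=-\infty$ or $\delta_+=+\infty$. The entire computation therefore reduces to deciding, in each case, whether $\delta_-$ and $\delta_+$ are finite and whether the supremum/infimum is attained in the closure.

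For case i) with $\dinf\in\RR$, I would first note that Assumption (B2) forces $|b(t)|\le\beta(|d(t)|+1)$, so $|b|$ is bounded as $t\to\infty$ because $d$ converges; combined with (B3a), which controls $\tpi(\cdot,\la)=p-|b|^2/(d-\la)$, one sees that $\limsup_{t\to\infty}|b(t)|^2/p(t)$ behaviour splits precisely according to $\liminf_{t\to\infty}p(t)$. If $\liminf p>0$, then $|b|^2/p$ stays bounded, so $\Delta$ is bounded below and $\delta_-\in\RR$, giving the compact interval $[\delta_-,\delta_+]$. If $\liminf p=0$, then along a subsequence $|b|^2/p\to+\infty$ (using that $|b|$ does not simultaneously vanish too fast, which is where (B3a) on boundedness of $\tpi$ is used to exclude degeneracy), so $\delta_-=-\infty$ and the range extends to $(-\infty,\delta_+]$. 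The finiteness of $\delta_+$ is immediate since $\Delta(t)\le d(t)\to\dinf<+\infty$ and $|b|^2/p\ge0$.

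For case ii) with $\dinf=+\infty$, I would factor $\Delta(t)=d(t)\bigl(1-|b(t)|^2/(d(t)p(t))\bigr)$ when convenient, but more directly rewrite the relevant quantity as $p-|b|^2/d$ after dividing: the sign of $\liminf(p/d)$ governs whether $\Delta(t)$ stays bounded (when $p/d$ is bounded away from $0$, so $|b|^2/p=|b|^2/(d\cdot(p/d))$ is controlled and $\Delta$ has finite $\delta_\pm$) or escapes. When $\liminf(p/d)=0$, the behaviour of $\Delta\sim d\cdot(1-|b|^2/(dp))$ is dictated by the sign of $p-|b|^2/d$: if this is eventually positive, $\Delta\to+\infty$ along the relevant subsequence and the range is $[s,+\infty)$; if eventually negative, $\Delta\to-\infty$ and the range is $(-\infty,s]$, where $s$ is the appropriate finite endpoint ($\delta_-$ or $\delta_+$). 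Case iii) with $\dinf=-\infty$ is the simplest: $\Delta(t)\le d(t)\to-\infty$ forces $\delta_-=-\infty$, while $\delta_+$ is finite because $\Delta$ starts at a finite value and $d$ eventually dominates downward, yielding $(-\infty,\delta_+]$.

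The main obstacle I anticipate is case ii): unlike the other cases, the competition between the growth of $d$ and the term $|b|^2/p$ is genuinely two-sided, and one must argue carefully—using (B2), (B3a), and the continuity of all coefficients—that the $\liminf/\limsup$ hypotheses on $p/d$ and on $p-|b|^2/d$ really pin down which endpoint is finite and that no oscillation can produce a range covering all of $\RR$ outside the stated dichotomy. The hypothesis $\Delta([0,\infty))\ne\RR$ is exactly what rules out the oscillatory borderline scenario, and verifying that the two displayed subcases are mutually exclusive and exhaustive under this hypothesis is the delicate point; the rest is a routine continuity-and-limits argument on the scalar function $\Delta$.
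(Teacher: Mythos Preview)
Your overall strategy---analyse the range of the continuous function $\Delta$ by controlling its behaviour as $t\to\infty$ in each of the three cases---matches the paper's approach. However, two of your key boundedness claims do not follow from the assumptions you invoke.

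\medskip

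\textbf{Case i), subcase $\liminf p=0$.} You write that ``$|b|$ does not simultaneously vanish too fast, which is where (B3a) on boundedness of $\tpi$ is used to exclude degeneracy''. But (B3a) only says $\tpi(\cdot,\la)$ is \emph{bounded}; it does not forbid $\tpi(t_n,\la)\to 0$. If $p(t_n)\to 0$ and $\Delta(t_n)$ stayed bounded, one would get $b(t_n)\to 0$ and hence $\tpi(t_n,\la)\to 0$, which is perfectly compatible with (B3a). The contradiction comes from (B3b), namely the boundedness of $1/\tpi(\cdot,\la)$ for $\la\notin\overline{\Delta([0,\infty))}$. This is exactly how the paper argues.

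\medskip

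\textbf{Case ii), subcase $\liminf(p/d)>0$.} You claim ``$|b|^2/p=|b|^2/(d\cdot(p/d))$ is controlled'', which would need $|b|^2/d$ bounded. Assumption (B2) gives only $|b|/d$ bounded, so $|b|^2/d\le C|b|$ can still diverge when $\dinf=+\infty$. The paper does \emph{not} bound $|b|^2/d$ directly; instead it fixes some admissible $\la$ and uses the identity
\[
  \tpi(t,\la)=p(t)-\frac{|b(t)|^2}{d(t)}-\la\,\frac{|b(t)|^2}{d(t)^2}\cdot\frac{1}{1-\la/d(t)}.
\]
Since (B2) makes $|b|/d$ bounded and $\dinf=+\infty$, the last term is bounded; (B3a) makes the left side bounded; hence $p-|b|^2/d$ is bounded. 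Then the factorisation $\Delta=(d/p)\bigl(p-|b|^2/d\bigr)$, together with $d/p$ bounded, yields $\Delta$ bounded. Your route via (B2) alone does not close this gap.

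\medskip

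The remaining parts (upper bound in i) via $\Delta\le d$; case iii); and in case ii) the two $\liminf(p/d)=0$ subcases, where you correctly use the stated sign hypotheses on $p-|b|^2/d$ directly) are fine and essentially the paper's argument.
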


\begin{proof}
i) Since $\,\dinf \in \RR$, the function $d$ is bounded. Because $p>0$ by (A), the estimate $\Delta(t) \le d(t)$, $t\in [0,\infty)$, shows that in this case $\sessreg(\cA)$ is always bounded from above.
 
If $\inf_{t\ge 0} p(t)>0$, then $\frac 1p$ is bounded. Since $d$ is bounded, so is $b$ by Assumption (B2). Altogether this implies that $\Delta$ is bounded.


If $\inf_{t\ge 0} p(t)=0$, then there exist a sequence $(t_n)_0^\infty$ with $t_n\to\infty$ and $p(t_n)\to 0$ for $n\to\infty$. If $(\Delta(t_n))_0^\infty$ were bounded from below, this would imply $b(t_n)\to 0$ for $n\to\infty$ and hence 
\[
 \tpi(t_n,\la)=p(t_n)-\dfrac{|b(t_n)|^2}{d(t_n)-\la}\to 0, \quad n \to \infty,
\] 
for all $\la\in \big( \max\{\de_+,d_\infty\},\infty\big) \subset \RR \setminus \overline{\Delta([0,\infty))}$, 
a contradiction to Assumption (B3b). Therefore the range of $\Delta$ is not bounded from below in this case.

ii) If $\dinf=+\infty$, then $d$ is bounded from below and there exists a $t_0 \in [0,\infty)$ such that $d(t)>1$, $t\in[t_0,\infty)$.
Then Assumption (B2) implies that $\frac {|b(t)|}{|d(t)|} \le 2\beta$, $t\in [t_0,\infty)$.
By Assump\-tion (B3a) and Remark \ref{B3a}, we may fix an arbitrary $\la < \min \{ d(t): t\in [0,\infty) \}$ such that 
$\tpi(\cdot,\la)$ is bounded on $[0,\infty)$. In the following we use the identity
\begin{equation}
\label{pila}
  \tpi(t,\la) = p(t) - \frac{|b(t)|^2}{d(t)} - \la  \frac{|b(t)|^2}{d(t)^2} \frac 1 {1- \frac {\la}{d(t)}}, \quad t\in [0,\infty).
\end{equation}
 
As $\frac {|b|}d$ is bounded on $[t_0,\infty)$ and $\dinf=+\infty$, the last term on the right hand side of \eqref{pila} is bounded for $t\in[t_0,\infty)$. 
Since $\tpi(t,\la)$ is bounded for $t\in [0,\infty)$ by Assumption (B3a), $p(t) - \frac{|b(t)|^2}{d(t)}$ is bounded on $[t_0,\infty)$.
Therefore, if $\,\liminf_{t\to\infty} \big( \frac{p}{d}\big)(t)  >0$, the relation
\begin{equation}
\label{sun}  
  \Delta(t) = \frac{d(t)}{p(t)} \Big( p(t) - \frac{|b(t)|^2}{d(t)} \Big), \quad t\in[0,\infty),
\end{equation}
shows that $\Delta$ is bounded, i.e.\ $\de_\pm \in \RR$ and $\overline{\Delta([0,\infty))} = [\de_-,\de_+]$.

\smallskip 

Now suppose that $\sess^r(\cA) \!=\! \overline{\Delta([0,\infty))} \!\ne \!(-\infty,\infty)$ and $\liminf_{t\to\infty}\! \big( \frac{p}{d}\big)(t)=0$. Then there exists a sequence $(t_n)_0^\infty$ with $t_n\to\infty$ and $\frac{p(t_n)}{d(t_n)}\searrow 0$ for $n\to\infty$. 
By Assumptions (B3a), (B3b), we may choose an arbitrary $\la \notin \overline{\Delta([0,\infty))}$ and corresponding $t_\la \in [0,\infty)$ such that $\la \notin \overline{d([t_\la,\infty))}$ and the functions $\tpi(\cdot,\la)$, $\frac 1{\tpi(\cdot,\la)}$ are bounded on~$[t_\la,\infty)$.
This, together with the relation
\begin{align}
\label{p1}
  \frac{|b(t)|^2}{d(t)^2} = \Big( 1 - \frac \la {d(t)}\Big) \Big( \frac{p(t)}{d(t)} - \frac{\tpi(t,\la)}{d(t)} \Big), \quad t\in[t_\la,\infty),
\end{align}
and the assumption that $\dinf=+\infty$, implies that $\frac{|b(t_n)|^2}{d(t_n)^2}\to 0$ for $n\to\infty$. 
From \eqref{pila} and the boundedness of $\frac 1{\tpi(\cdot,\la)}$ on $[t_\la,\infty)$, 
we conclude that there exist $c>0$ and $N\in\N$ such that
\[
  \Big| p(t_n) -  \frac{|b(t_n)|^2}{d(t_n)}\Big| \ge c >0, \quad n \ge N,
\vspace{-2mm}  
\]
and hence
\[
  | \Delta(t_n) | = \Big| \frac{d(t_n)}{p(t_n)} \Big| \Big| p(t_n) -  \frac{|b(t_n)|^2}{d(t_n)} \Big| \to \infty, \quad n\to \infty.
\]
Depending on the sign of $p-\frac{|b|^2}{d}$, the range of $\Delta$ is thus either unbounded from above or from below.
Since $\Delta([0,\infty))$ is an interval and we had assumed $\ov{\Delta([0,\infty))} \ne (-\infty,\infty)$, the second claim in ii) follows.

\smallskip

iii) If $\,\dinf=-\infty$, then $d$ is bounded from above and the claim follows from the estimates  
\[
 \Delta(t) \le d(t) \le \sup_{\tau\in[0,\infty)} d(\tau) < \infty, \quad t\in [0,\infty). \qedhere
\] 
\end{proof}

%

\medskip

\noindent
{\bf 5.2. The singular part.} 
The following Assumptions (C1), (C2), and (C3) imply Assumptions (B3a), (B3b), and (C); 
we continue to use Notation \ref{not-inf} to denote limits for $t\to\infty$. 

\smallskip

Since $\sess(\cA) = \RR$ if $\Delta([0,\infty))=\RR$ by Proposition \ref{regpart}, we may assume $\Delta([0,\infty))\ne\RR$ in this subsection.

\smallskip

\noindent
\textbf{Special case of Assumption (C).} \
(C1) \ For $\la\in \RR \setminus \big( \overline{\Delta([0,\infty))} \cup \{\dinf\} \big)$ the \hspace*{7.5mm} following limits exist and are finite:
\begin{align*}
  \tpi(\cdot,\la)\linf, 
   \qquad 
   \Big( \dfrac{\partial}{\partial t}\tpi(\cdot,\la) \Big)\blinf, 
\end{align*}
\begin{enumerate}
\item[]
and $\tpi(\cdot,\la)\linf \ne 0$ for some (and hence all) $\la$. 
\item[(C2)] For $\la\in \RR \setminus \{\dinf\}$ the  limits 
$ \bigg( \dfrac{\overline{b}c}{d-\la}  \bigg)\blinf $, $ \bigg( \dfrac{\partial}{\partial t} \dfrac{\overline{b}c}{d-\la} \bigg)\blinf $ 
exist and are finite.
\item[(C3)] For $\la\in \RR \setminus \{\dinf\}$ the limit 
$ \Big( q-\la -\dfrac{|c|^2}{d-\la} \Big)\blinf$ exists and is finite.
\end{enumerate}
\medskip

\begin{remark}
\label{rem:last}
i) \ Assumptions (C1) and (C2) imply that 
\begin{align}
\label{easy}
\Big( \frac{\partial}{\partial t}\tpi(\cdot,\la) \Big)\linf=0, \quad
\Big( \frac{\partial}{\partial t} \frac{\overline{b}c}{d-\la} \Big)\linf =0.
\end{align}
Together with the definition of $\trho(\cdot,\la)$ in \eqref{pirho} and of $\tka(\cdot,\la)$ in \eqref{kappa}, 
it follows that the limits $\trho(\cdot,\la)\linf$, $\tka(\cdot,\la)\linf$ exist, are finite and have the form
\begin{align*}
  \trho(\cdot,\la)\linf = -  \Big( \frac{ 2 \Im \big( b \overline{c} \big) }{d-\la} \Big)\blinf, \quad
  \tka(\cdot,\la)\linf = \Big( q-\la -\dfrac{|c|^2}{d-\la} \Big)\blinf.
\end{align*}
ii) \ Due to \eqref{pidelta}, for the last condition in (C1) we have the equivalence
\[
  \tpi(\cdot,\la)\linf \ne 0  \iff
  \begin{cases}
  \ \liminf\limits_{t\to\infty} \Big( \dfrac{p}{|d|}\Big)(t) > 0 & \ \text{if } \dinf \in \{\pm \infty\},\\[2mm] 
  \ \liminf\limits_{t\to\infty}   p(t) > 0 & \ \text{if } \dinf \in \RR.
  \end{cases}
\]
\end{remark}

In the sequel we determine the form of the limits $\tpi(\cdot,\la)\linf$, $\trho(\cdot,\la)\linf$,  $\tka(\cdot,\la)\linf$ as functions of $\la$. They 
depend on  whether the limit $\dinf\!=\lim_{t\to\infty}d(t) \!\in\! \RR\cup\{\pm\infty\}$ (which exists by Assumption (B1)) is finite or not.

\medskip

Assumption (C1) (for two different values of $\la$) together with \eqref{pila} and  \eqref{pila0} implies that 
the limits 
\begin{align*}
\left\{ \begin{array}{rrl}
 \Big( p - \dfrac{|b|^2}{d} \Big)\linf, \ \ & \Big( \dfrac{|b|^2}{d^2} \Big)\blinf \ \  &  \text{ if } \dinf=\pm \infty, \\[4mm]
 p\linf \ (\ge 0)\,, \ \  & (|b|^2)\linf \ \  & \text{ if } \dinf\in\RR,
\end{array} \right.
\end{align*}
exist, are finite and that the limit $\tpi(\cdot,\la)\linf$ has the form 
\begin{align}
\label{tpi-inf}
 \tpi(\cdot,\la)\linf = \Big( p-\frac{|b|^2}{d-\la} \Big)\blinf \! = \!\begin{cases}
                     \ \Big( p - \dfrac{|b|^2}{d} \Big)\linf - \la \Big( \dfrac{|b|^2}{d^2} \Big)\blinf & \text{ if } \dinf=\pm\infty, \\[3mm]
                     \ \hspace{2mm} p\linf -\dfrac{(|b|^2)\linf}{\dinf-\la}  & \text{ if } \dinf\in\RR.
                    \end{cases} \hspace{-4mm}
\end{align}
%
Remark \ref{rem:last} implies that the limits
\begin{align*}
\left\{ \begin{array}{rl}
 \Big( \dfrac{\Im ( b \overline{c})}{d} \Big)\linf \ \  &  \text{ if } \dinf=\pm \infty, \\[4mm]
 ( \Im ( b \overline{c}))\linf \ \  & \text{ if } \dinf\in\RR,
\end{array} \right.
\end{align*}
exist, are finite and that $\trho(\cdot,\la)\linf$ has the form 
\begin{align}
\label{trho-inf}
 \trho(\cdot,\la)\linf = \begin{cases}
                     \ -2 \Big( \dfrac{\Im ( b \overline{c})}{d} \Big)\linf  & \text{ if } \dinf=\pm \infty, \\[3mm]
                     \ -2 \dfrac{( \Im ( b \overline{c}))\linf}{\dinf-\la}  & \text{ if } \dinf\in\RR.
                    \end{cases}
\end{align}
%
%
Assumption (C3) implies that the limits 
\begin{align*}
\left\{ \begin{array}{rrl}
 \Big( q - \dfrac{|c|^2}{d} \Big)\linf, \ \ & \bigg( \dfrac{|c|^2}{d^2} \bigg)\blinf \ \  &  \text{ if } \dinf=\pm \infty, \\[4mm]
 q\linf , \ \  & (|c|^2)\linf \ \  & \text{ if } \dinf\in\RR,
\end{array} \right.
\end{align*}
exist, are finite and that $\tka(\cdot,\la)\linf$ has the form 
\begin{align}
\label{tka-inf}
 \tka(\cdot,\la)\linf = \begin{cases}
                     \ \Big( q - \dfrac{|c|^2}{d} \Big)\linf  - \la \Big( 1+ \Big( \dfrac{|c|^2}{d^2} \Big)\blinf\, \Big) 
                     & \text{ if } \dinf=\pm\infty, \\[3mm]
                     \ q\linf -\la - \dfrac{(|c|^2)\linf}{\dinf-\la}  & \text{ if } \dinf\in\RR.
                    \end{cases}
\end{align}


\begin{proposition}
\label{struc-esssing}
Suppose that Assumptions {\rm (A)}, {\rm (B1)}, {\rm (B2)}, {\rm (C1)}, {\rm (C2)}, and {\rm (C3)} are satisfied. 
Then 
there exist $s_-$, $s_+$, $s\in\RR$, $s_- \le s_+ \le s$, such that the following hold:
\begin{enumerate}
\item[{\rm i)}] 
if $\dinf \in \RR$, then 
\end{enumerate}
\[
   \sesssing\bigl(\cA\bigr) = 
   \begin{cases}
   \ds \hspace{3mm}
   \left. \begin{array}{r}
   \big( [s_-,s_+] \cup [s,+\infty) \big) \setminus \big( \ov{\Delta([0,\infty))} \cup \{ d_\infty\} \big) \\ 
   \text { or } \ [s,+\infty) \ \setminus \big( \ov{\Delta([0,\infty))} \cup  \{ d_\infty\} \big)
   \end{array} \!\!\right\}
   & \text{if } \ p\linf \!>\! 0, \\[3mm] 
   \big( (-\infty,s_-] \cup [s_+,+\infty) \big) \setminus \big( \ov{\Delta([0,\infty))} \cup  \{ d_\infty\} \big)
   & \text{if } \ p\linf \!=\! 0;
   \end{cases}
\]
\begin{enumerate}
\item[{\rm ii)}]
if $\dinf = + \infty$, then 
\[
  \sesssing\bigl(\cA\bigr) = 
  \begin{cases}
  [s_-,s_+] \setminus \ov{\Delta([0,\infty))}  
  & \text{if } \ \Big( \dfrac{|b|^2}{d^2} \Big)\blinf > 0, \\[2.5mm]
  [s,+\infty) \setminus \ov{\Delta([0,\infty))}
  & \text{if } \ \Big( \dfrac{|b|^2}{d^2} \Big)\blinf = 0, \, \Big( p - \dfrac{|b|^2}{d} \Big)\blinf>0, \\[2.5mm]
  (-\infty,s] \setminus \ov{\Delta([0,\infty))}
  & \text{if } \ \Big( \dfrac{|b|^2}{d^2} \Big)\blinf = 0, \, \Big( p - \dfrac{|b|^2}{d} \Big)\blinf<0;
  \end{cases}
\]
\item[{\rm iii)}]
if $\dinf = -\infty$, then 
\[
  \sesssing\bigl(\cA\bigr) = 
  \begin{cases}
  [s_-,s_+] \setminus \ov{\Delta([0,\infty))}
  & \text{if } \ \Big( \dfrac{|b|^2}{d^2} \Big)\blinf > 0, \\[2.5mm]
  [s,+\infty) \setminus \ov{\Delta([0,\infty))}
  & \text{if } \ \Big( \dfrac{|b|^2}{d^2} \Big)\blinf = 0.
  \end{cases}
\]
\end{enumerate}
\end{proposition}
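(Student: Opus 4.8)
The plan is to start from the description of the singular part in Theorem~\ref{ess_spec1}, namely $\sesssing(\cA)=\{\la\in\RR\setminus(\ov{\Delta([0,\infty))}\cup\{\dinf\}):\Dis(\la)\ge 0\}$ with $\Dis(\la)$ as in \eqref{def-discr}, and to convert the sign condition on the discriminant into the sign of an explicit function of $\la$. Under Assumptions (C1)--(C3) the three limits $\tpi(\cdot,\la)\linf$, $\trho(\cdot,\la)\linf$, $\tka(\cdot,\la)\linf$ exist separately and are given by the closed formulas \eqref{tpi-inf}, \eqref{trho-inf}, \eqref{tka-inf}, with $\tpi(\cdot,\la)\linf\ne 0$. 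Hence the mixed limits factor as $(\trho/\tpi)\blinf=\trho\linf/\tpi\linf$ and $(\tka/\tpi)\blinf=\tka\linf/\tpi\linf$, and since $\tpi\linf^2>0$ the condition $\Dis(\la)\ge 0$ is equivalent to $N(\la)\ge 0$, where $N(\la):=\trho\linf^2-4\,\tpi\linf\,\tka\linf$. By Remark~\ref{rem:last} the numerators are real, so $N$ is a real-valued function of $\la$, and the whole statement reduces to locating the set $\{N\ge 0\}$ and then removing $\ov{\Delta([0,\infty))}\cup\{\dinf\}$.

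First I would treat $\dinf\in\RR$. Inserting \eqref{tpi-inf}, \eqref{trho-inf}, \eqref{tka-inf} and multiplying by the strictly positive factor $(\dinf-\la)^2$ shows that $N(\la)$ has the same sign as a polynomial $M(\la)$ whose leading coefficient in $\la$ equals $p\linf$. If $p\linf>0$, then $M$ is a genuine cubic with positive leading coefficient, so $\{M\ge 0\}$ is either $[s,+\infty)$ (one real root) or $[s_-,s_+]\cup[s,+\infty)$ (three real roots $s_-\le s_+\le s$); this yields the two alternatives in~i). If $p\linf=0$, the cubic term drops and $M$ becomes a quadratic with leading coefficient $(|b|^2)\linf$; since $\tpi\linf\ne 0$ together with $p\linf=0$ forces $(|b|^2)\linf>0$, this is an upward parabola and $\{M\ge 0\}$ has the form $(-\infty,s_-]\cup[s_+,+\infty)$.

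Next I would handle $\dinf=\pm\infty$ together. Here \eqref{tpi-inf}, \eqref{trho-inf}, \eqref{tka-inf} show that $\tpi\linf$ and $\tka\linf$ are affine in $\la$ while $\trho\linf$ is constant, so $N(\la)$ is itself a quadratic in $\la$ with leading coefficient $-\big(\tfrac{|b|^2}{d^2}\big)\blinf\big(1+\big(\tfrac{|c|^2}{d^2}\big)\blinf\big)\le 0$. If $\big(\tfrac{|b|^2}{d^2}\big)\blinf>0$ this is a downward parabola and $\{N\ge 0\}=[s_-,s_+]$, giving the first line of ii) and iii). If $\big(\tfrac{|b|^2}{d^2}\big)\blinf=0$, the quadratic degenerates to an affine function whose slope has the sign of $\tpi\linf=\big(p-\tfrac{|b|^2}{d}\big)\blinf\ne 0$, so $\{N\ge 0\}$ is a half-line $[s,+\infty)$ or $(-\infty,s]$ according to this sign. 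The orientation is pinned down by \eqref{pidelta}: when $\dinf=-\infty$ both $d-\la$ and $\Delta-\la$ are eventually negative while $p>0$, forcing $\tpi\linf>0$, which is why iii) exhibits only the $[s,+\infty)$ alternative; for $\dinf=+\infty$ the sign of $\Delta\linf-\la$ is not fixed and both half-lines occur.

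The polynomial expansions are routine; the two points that need care are, first, the degenerate subcases in which a leading coefficient vanishes (one must identify which coefficient survives and its sign, using $\tpi\linf\ne 0$ and, for the orientation of the half-lines, the sign analysis via \eqref{pidelta}), and, second, the uniform assertion that suitable real numbers $s_-\le s_+\le s$ always exist. I would define these as the ordered real roots of $M$, and in the degenerate configurations with fewer real roots (or an empty, resp.\ one-point, sign set) absorb the remaining parameters by placing them inside $\ov{\Delta([0,\infty))}$, so that the displayed set identities still hold after subtracting $\ov{\Delta([0,\infty))}\cup\{\dinf\}$. I expect the bookkeeping of these degenerate configurations, rather than any single computation, to be the main obstacle.
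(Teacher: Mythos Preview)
Your proposal is correct and follows essentially the same route as the paper: both reduce the discriminant condition to the sign of $N(\la)=\trho(\cdot,\la)\linf^2-4\,\tpi(\cdot,\la)\linf\,\tka(\cdot,\la)\linf$, insert the explicit formulas \eqref{tpi-inf}--\eqref{tka-inf}, and read off the degree and sign of the leading coefficient of the resulting polynomial in~$\la$. The only differences are cosmetic (you multiply explicitly by $(\dinf-\la)^2$, the paper writes the cleared inequality directly; you argue $(p-|b|^2/d)\linf>0$ for $\dinf=-\infty$ via \eqref{pidelta}, whereas the immediate observation $|b|^2/d<0$ suffices), and your handling of degenerate root configurations by absorption into $\ov{\Delta([0,\infty))}$ is a reasonable way to secure the existence of $s_-\le s_+\le s$, which the paper leaves to the reader under ``elementary sign considerations''.
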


\begin{proof}
By Assumptions {\rm (C1)}, {\rm (C2)}, and {\rm (C3)}, the condition $\Dis(\la) \ge 0$ in \eqref{def-discr} for $\la$ 
to belong to $\sesssing\bigl(\cA\bigr)$ is equivalent to 
\begin{align}
\label{sess-formula2}
   \trho(\cdot,\la)\linf^2- 4\tka(\cdot,\la)\linf \tpi(\cdot,\la)\linf \ge 0.
\end{align}

i) If $\dinf \in \RR$, then 
\eqref{sess-formula2} has the form
\begin{equation}
\label{veryverylast}
( \Im ( b \overline{c}))\linf ^2 \ge   \big( (q\linf -\la)(\dinf-\la) - (|c|^2)\linf \big) 
 \big( p\linf (\dinf-\la) - (|b|^2)\linf \big). 
\end{equation}
The polynomial on the right hand side is at most cubic in $\la$ with leading coeffcient $-p\linf$; if $p\linf=0$, then the leading quadratic coefficient $-(|b|^2)\linf$ is negative; note that $(|b|^2)\linf\ne 0$ since otherwise $\frac 1{\pi(\cdot,\la)}$ would not be bounded.

ii), iii) If $\dinf = \pm \infty$, then the relations \eqref{tpi-inf}, \eqref{trho-inf}, and \eqref{tka-inf} imply that \eqref{sess-formula2} has the form
\[
 \Big( \dfrac{\Im ( b \overline{c})}{d} \Big)\linf ^2 
 \ge  
 \Big( \Big( p - \dfrac{|b|^2}{d} \Big)\blinf - \la \Big( \dfrac{|b|^2}{d^2} \Big)\blinf \Big) 
 \Big( \Big( p - \dfrac{|b|^2}{d} \Big)\blinf  - \la \Big( 1+ \Big( \dfrac{|b|^2}{d^2} \Big)\blinf\, \Big) \Big).
\]
The polynomial on the right hand side is at most quadratic in $\la$; 
more precisely, if $\big( \frac{|b|^2}{d^2} \big)\blinf > 0$ it is quadratic with non-negative leading coefficient and non-negative discriminant; 
if $\big( \frac{|b|^2}{d^2} \big)\blinf = 0$, it is linear with leading coefficient having the opposite sign as $\big( p - \frac{|b|^2}{d} \big)\blinf$, and constant $0$ if the latter is $0$. Moreover, if $\dinf = -\infty$, then $\big( p - \frac{|b|^2}{d} \big)\blinf >0$.

\smallskip

Now all claims in i), ii), iii) follow from elementary sign considerations.
\end{proof}

\begin{remark}
\label{rem-p1}
i) \ If $\dinf \in \RR$, the last factor in \eqref{veryverylast} is equal to $p_\infty(\Delta_\infty-\la)$ by \eqref{Delta} and hence $\Delta_\infty\in [\de_-,\de_+]$ satisfies inequality \eqref{sess-formula2}.
\\[0.5mm]
ii) \ If $\dinf = \pm \infty$, the relation \eqref{p1} shows that $\Big( \dfrac{|b|^2}{d^2} \Big)\blinf = \Big( \dfrac{p}{d} \Big)\blinf $.
\end{remark}

\medskip

\noindent
{\bf 5.3. The whole essential spectrum.} 
If we combine the information about the regular and singular part, we obtain the following result for the essential spectrum of $\cA$.

\begin{theorem}
\label{unified}
Suppose that Assumptions {\rm (A)}, {\rm (B1)}, {\rm (B2)}, {\rm (C1)}, {\rm (C2)}, and {\rm (C3)} are satisfied. 
Then the essential spectrum of every closed symmetric extension $\cA$ of the operator $\cA_0$ in \eqref{A0}  has the following form:
\begin{enumerate}
\item[{\rm i)}]
if $d_{\infty}\in \RR$, \vspace{-3mm} then
\end{enumerate}
\begin{equation*}
\sess(\cA) \setminus \{\dinf\} 
= \begin{cases} 
\ds \hspace{-2mm}
\left. \begin{array}{r}
\big( [\min\{s_-,\delta_-\}, \max\{s_+,\delta_+\}] \cup [s,\infty) \big) \setminus \{d_\infty\}\!\!\\
\text { or } \ \big( [ \min\{s,\delta_-\},\infty) \big) \setminus \{d_\infty\}\!\!
\end{array} \right\}
&\textnormal{if } p\linf\!>0, \\ 
\hspace{10mm}
\ds  \big( (-\infty, \max\{s_-,\delta_+\}] \cup [s_+, \infty) \big) \setminus \{d_\infty\} & \textnormal{if } p\linf\!=0; \\
\end{cases}
\end{equation*}
\begin{enumerate}
\item[{\rm ii)}]
if $d_{\infty}=+\infty$, then 
\begin{equation*}
\sess(\cA)= \begin{cases} 
\ds [\min\{s_-,\delta_-\}, \max\{s_+,\delta_+\}]
& \textnormal{if } \Big( \dfrac{p}{d} \Big)\blinf\!\!>0, \\[2.5mm] 
\ds [\min\{s,\delta_-\}, \infty) & \textnormal{if } \Big( \dfrac{p}{d} \Big)\blinf\!\!=0, \, \Big( p - \dfrac{|b|^2}{d} \Big)\blinf\!\!>0,\\[2.5mm]
\ds (-\infty, \max\{s,\delta_+\}] & \textnormal{if } \Big( \dfrac{p}{d} \Big)\blinf\!\!=0, \, \Big( p - \dfrac{|b|^2}{d} \Big)\blinf\!\!<0; 
\end{cases}
\end{equation*}
\item[{\rm iii)}]
if $d_{\infty}=-\infty$, then 
\begin{equation*}
\sess(\cA)= 
\begin{cases}  \ds (-\infty, \max\{s_+,\delta_+\}] 
& \textnormal{if } \Big( \dfrac{p}{d}  \Big)\blinf\!\!>0,\\[2.5mm]
\ds (-\infty, \delta_+] \cup [s,\infty) & \textnormal{if } \Big( \dfrac{p}{d}  \Big)\blinf\!\!=0; 
\end{cases}
\end{equation*}
\end{enumerate}
here $\de_-$, $\de_+$ are given by \eqref{depm} and $s_-$, $s_+$, $s$ are as in Proposition {\rm \ref{struc-esssing}}.
\end{theorem}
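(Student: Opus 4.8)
The plan is to assemble Theorem~\ref{unified} directly from the two structural results already established for the separate parts of the essential spectrum, namely Proposition~\ref{prop2:regpart} for $\sessreg(\cA)=\ov{\Delta([0,\infty))}$ and Proposition~\ref{struc-esssing} for $\sesssing(\cA)$, combined via the master formula of Theorem~\ref{ess_spec1}. The starting point is the identity
\[
  \sess(\cA)\setminus\{\dinf\}
  = \bigl(\sessreg(\cA)\cup\sesssing(\cA)\bigr)\setminus\{\dinf\},
\]
which holds because Assumptions (C1)--(C3) imply (B3a), (B3b), and (C), so the hypotheses of Theorem~\ref{ess_spec1} are met. The whole proof is then a bookkeeping exercise: in each of the three cases $\dinf\in\RR$, $\dinf=+\infty$, $\dinf=-\infty$, I take the union of the interval(s) describing $\sessreg$ with the interval(s) describing $\sesssing$ and simplify.

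First I would dispose of the degenerate situation $\Delta([0,\infty))=\RR$, handled already in Proposition~\ref{regpart}, so that throughout I may assume $\ov{\Delta([0,\infty))}$ is a proper (possibly half-infinite) interval; this is exactly the standing hypothesis under which Proposition~\ref{struc-esssing} was proved. Then, case by case, I would match the alternatives of the two propositions. For instance, when $\dinf=+\infty$ and $\bigl(\frac{|b|^2}{d^2}\bigr)\blinf>0$, Remark~\ref{rem-p1}~ii) gives $\bigl(\frac{|b|^2}{d^2}\bigr)\blinf=\bigl(\frac{p}{d}\bigr)\blinf$, so the regular part is the bounded interval $[\de_-,\de_+]$ by Proposition~\ref{prop2:regpart}~ii) and the singular part is $[s_-,s_+]\setminus\ov{\Delta([0,\infty))}$ by Proposition~\ref{struc-esssing}~ii). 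Taking the union and removing the now-superfluous set difference (since $\ov{\Delta([0,\infty))}=[\de_-,\de_+]$ is itself part of $\sess(\cA)$) yields $[\min\{s_-,\de_-\},\max\{s_+,\de_+\}]$. The remaining subcases of ii) and iii) are obtained the same way, using the sign of $\bigl(p-\frac{|b|^2}{d}\bigr)\blinf$ to decide whether the singular part is a right or left half-line and then gluing it to the corresponding half-line or bounded interval from the regular part.

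The case $\dinf\in\RR$ requires the most care, because here both parts can contribute and the set-difference $\setminus\{\dinf\}$ cannot in general be dropped. When $p\linf>0$ the regular part is the bounded interval $[\de_-,\de_+]$ (Proposition~\ref{prop2:regpart}~i)), and the singular part is one of the two alternatives in Proposition~\ref{struc-esssing}~i); unioning $[\de_-,\de_+]$ with $[s_-,s_+]\cup[s,\infty)$ collapses the two finite intervals into $[\min\{s_-,\de_-\},\max\{s_+,\de_+\}]$ while preserving the half-line $[s,\infty)$, giving the stated ``two intervals or one'' dichotomy. When $p\linf=0$ the regular part is $(-\infty,\de_+]$ and the singular part is $(-\infty,s_-]\cup[s_+,\infty)$, whose union is $(-\infty,\max\{s_-,\de_+\}]\cup[s_+,\infty)$. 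Throughout I would invoke Remark~\ref{rem-p1}~i) to note that $\Delta_\infty\in[\de_-,\de_+]$ satisfies the discriminant inequality, which guarantees that the regular interval abuts the singular set rather than leaving a gap, so that the stated merged intervals are genuinely intervals.

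The main obstacle is not any single deep step but the correct handling of the endpoints and the set-difference by $\{\dinf\}$ (and by $\ov{\Delta([0,\infty))}$) in the case $\dinf\in\RR$: one must check that removing the point $\dinf$ does not disconnect an interval in a way that changes its stated form, and that the orderings $s_-\le s_+\le s$ from Proposition~\ref{struc-esssing} are compatible with the $\min/\max$ expressions so that no spurious gaps appear. Verifying that the merged sets are exactly the closed intervals claimed—rather than these intervals with one or two interior points deleted—amounts to confirming that $\dinf$ and the relevant $\Delta$-values lie in the interior and are recaptured by the \emph{closure} in $\sessreg$; this is where the continuity of $\Delta$ and Remark~\ref{rem-p1} do the essential work. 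Once these boundary bookkeeping points are settled, all three cases follow by elementary interval arithmetic.
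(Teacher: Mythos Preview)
Your proposal is correct and follows essentially the same approach as the paper, which gives a one-line proof: ``All claims follow from the respective claims in Propositions~\ref{prop2:regpart} and~\ref{struc-esssing}, observing Remark~\ref{rem-p1}~i) and~ii).'' You have in fact spelled out more of the interval bookkeeping than the paper does, including the role of Remark~\ref{rem-p1}~i) in ensuring that the regular and singular intervals abut rather than leaving a gap.
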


\begin{proof}
All claims follow from the respective claims in Propositions \ref{prop2:regpart} and \ref{struc-esssing}, observing Remark \ref{rem-p1} i) and ii).
%
\end{proof}

Note that the intervals in i) and iii) of Theorem \ref{unified} need not be disjoint; 
e.g.\ if $d_\infty\in\RR$ and $p_\infty>0$ it may happen that between the two intervals $[s_-,s_+]$, $[s,\infty)$ in $\sesssing(\cA)$ there is a gap which is covered by $\sessreg(\cA)\!=\!\ov{\Delta([0,\infty))}\!=\![\de_-,\de_+]$.


\medskip

An even more particular case of Theorem \ref{ess_spec1} is that
all coefficients $p$, $q$, $b$, $c$, and $d$ have  limits at $\infty$, or that they are even constant.

\begin{example}
\label{coeff_lim}
Suppose that all the limits 
\[
 p\linf, \quad q\linf, \quad b\linf, \quad c\linf, \quad \dinf \quad \textnormal{and} \quad 
 (p')\linf, \quad  (b')\linf, \quad (c')\linf, \quad (d')\linf, \quad 
\]
exist and are finite $($which implies that the limits of all derivatives are $0$), that $p\linf>0$, and that $\Im(\ov{b\linf}c\linf)=0$
$($which holds e.g.\ if $b$ and $c$ are real-valued$)$.
Then, \vspace{-3mm} with
\begin{align*}
  \Delta\linf 
  \defequ \dinf-\frac{|b\linf|^2}{p\linf}, \quad \de_-=\inf_{t\in[0,\infty)} \!\Delta(t), \quad \de_+=\sup_{t\in[0,\infty)} \!\Delta(t),\\
  \Lambda^\pm\linf \defequ \frac{q\linf+\dinf}2 \pm \sqrt{\left(\frac{q\linf-\dinf}2\right)^2+|c\linf|^2},
\end{align*}
the essential spectrum of every closed symmetric extension $\cA$ of $\cA_0$ in \eqref{A0} is given by
\[
  \sess(\cA) \setminus\{\dinf\} \!=\! \Bigl( [\de_-,\de_+] \,\cup\, \bigl[\min\bigl\{\Delta\linf\!, \Lambda^-\linf\bigr\},
  \max\bigl\{ \Delta\linf\!, \Lambda^-\linf\bigr\}\bigr]
  \,\cup\, \bigl[\Lambda^+\linf\!, \infty\bigr)\Bigr)\setminus\{\dinf\};
\]
note that the points $\Lambda^\pm\linf$ are the eigenvalues of the $2\times 2$ matrix
\[
  \begin{pmatrix} q\linf & \ov{c\linf} \\[1ex] c\linf & \dinf \end{pmatrix}
\]
whose entries are the limits of the lowest order terms in $\cA$.
%
%
In fact, the assumptions imply that not only Assumptions (B) and (C) are satisfied, but even the stronger assumptions (C1) and (C2) 
of Section~3.2 with $(\rho(\cdot,\la))\linf=0$. Since $\dinf \in \RR$, we know from Theorem \ref{unified} that
$\sess(\cA)$ consists of at most two intervals, possibly one bounded and one unbounded interval extending to $+\infty$, plus perhaps the point $\dinf$.

More precisely, Proposition~\ref{prop2:regpart}~i) and Theorem~\ref{ess_spec1} imply that $\sessreg(\cA)=\ov{\Delta([0,\infty))} = [\de_-,\de_+]$ and
\begin{align*}
  \sesssing(\cA) & = \,
  \bigr\{\la\in\RR\setminus\big( \ov{\Delta([0,\infty))} \cup \{\dinf\} \big) \colon 
  \Big( \dfrac{\tka(\cdot,\la)}{\tpi(\cdot,\la)} \Big)\blinf \le 0\bigr\} \\
  &=\Bigr\{\la\in\RR\setminus \big( \ov{\Delta([0,\infty))} \cup \{\dinf\} \big) \colon 
  \frac{(q\linf-\la)(\dinf-\la)-|c\linf|^2}{p\linf(\dinf-\la)-|b\linf|^2} \le 0\Bigr\}.
\end{align*}
The zeros of the quadratic polynomial in the numerator above are $\Lambda^-\linf \le \Lambda^+\linf$, 
while the zero of the linear function in the denominator is $\Delta\linf$.
The claim for $\sesssing\bigl(\cA\bigr)$ follows from mere sign considerations if we observe, in addition, that
\[
  \Lambda^+\linf
  \ge \frac{q\linf+\dinf}2 + \left|\frac{q\linf-\dinf}2\right|
  = \max\{q\linf, \dinf\} \ge \dinf
  \ge \dinf-\frac{|b\linf|^2}{p\linf} = \Delta\linf,
\]
where we have used that $p\linf>0$.

A particular case of the example above is the block operator matrix
\[
  \cA_0= \matrix{cc}{- \ds p\linf \frac{\d^2}{\d t^2} + q\linf & -\ov{b\linf} \dt \\[1.5ex] b\linf \dt & \dinf}
\]
with constant coefficients, $c\equiv 0$, and $p\linf >0$ in $L^2(0,\infty)\oplus L^2(0,\infty)$. 
\vspace{-1mm}Then
\[
  \Delta\linf = \de_- = \de_+ = \dinf-\frac{|b\linf|^2}{p\linf}, \quad
  \Lambda^+\linf = \max\{ \dinf, q\linf \}, \quad \Lambda^-\linf = \min\{ \dinf, q\linf \}
\vspace{-1mm}
\]
and \vspace{-1mm}
\[
  \sess(\cA) \!=\! \begin{cases}
  \bigl[\dinf\!-\!\frac{|b\linf|^2}{p\linf}\!, \dinf \bigr] \cup \bigl[ q\linf, \infty\bigr)
  \!\!&\text{if } \dinf \!\le\! q\linf, \\[1.5ex]
  \bigl[\min\bigl\{\dinf\!-\!\frac{|b\linf|^2}{p\linf}\!, \alpha \bigr\},
  \max\bigl\{\dinf\!-\!\frac{|b\linf|^2}{p\linf}\!, q\linf \bigr\}\bigr] \cup \bigl[\dinf,\infty\bigr)
  \!\!&\text{if } \dinf \!>\! q\linf.
  \end{cases}
\]
%
%
Hence, if $q\linf=\dinf$, then $\sess(\cA)$ consists of one
interval; otherwise, it consists of two intervals.
\end{example}


%
%

\section{Matrix differential operators on $(0,1]$}
\label{sec4}

In this section we consider matrix differential operators defined on $(0,1]$ for which $0$ is a singular end-point and which are symmetric 
in a product of weighted $L^{2}$-spaces.

Using a suitable transformation to the interval $[0,\infty)$, we establish assumptions on the behaviour of the original coefficients in $(0,1]$ allowing us to prove an analogue of Theorem \ref{ess_spec1}.

We consider the differential \vspace{-1mm} expressions
\begin{alignat*}{2}
  \tau_{\tA}&:= - \frac 1{w_1} \dx \tp \dx w_2 + \tq, & \qquad
  \tau_{\tB}&:= - \dx \ov{\,\tb\,} - \ds \frac {w'}w \ov{\,\tb\,}  + \ov{\,\tc\,}, \\
  \tau_{\tC}&:= \tb \dx  + \tc, &\qquad
  \tau_{\tD}&:= \td,
\end{alignat*}
with coefficient functions $\tp$, $\tq$, $\tb$, $\tc$, $\td$, and $w_1$, $w_2$, $w:=w_1w_2$ satisfying the following. 


\vspace{2mm}

\textbf{Assumption ($\widetilde{\mbox{A}}$).} \ $\tp$, $\td\in C^{2}((0,1],\RR)$, $\tb \in C^2((0,1],\CC)$, $\tc\in C^1((0,1],\CC)$, $\tq\in C((0,1],\RR)$ with $\tp(x) > 0$, $x\in (0,1]$, and $w_1,w_2\in C^2((0,1],\RR)$ with $w=w_1w_2 >0$ on $(0,1)$.
\vspace{2mm}
\newline
We denote by $\tA_0, \tB_0, \tC_0$, and $\tD_0$ the operators in the weighted Hilbert space $L^2((0,1),w)$  induced by the differential expressions $\tau_{\tA}$,
$\tau_{\tB}$, $\tau_{\tC}$, $\tau_{\tD}$ with domains 
\begin{equation*}
\cD(\tA_0):=C^{2}_0((0,1)), \quad \cD(\tB_0)=\cD(\tC_0):=C^{1}_0((0,1)), \quad \cD(\tD_0):=C_0((0,1)).
\end{equation*}
In the Hilbert space $L^2((0,1),w)^2=L^2((0,1),w)\oplus L^2((0,1),w)$ we consider the matrix differential operator 
\begin{equation}
\label{matrix01}
\begin{aligned}
  & \wt\cA_0 \defeq \matrix{cc}{\tA_0 & \tB_0 \\ \tC_0 & \tD_0} =
  \matrix{cc}{- \displaystyle{\frac 1{w_1}} \dx \tp \dx w_2 + \tq
  & - \dx \ov{\,\tb\,} - \ds \frac {w'}w \ov{\,\tb\,} + \ov{\,\tc\,} \\[2ex]
              \tb \dx  + \tc & \td},\\
 & \cD(\wt\cA_0) := C_0^{2}((0,1)) \oplus C_0^{1}((0,1)).
\end{aligned}
\end{equation}
Note that, with respect to the scalar product in $L^2((0,1),w)$, the differential expression defining $\tA_0$ is symmetric and those defining $\tB_0$, $\tC_0$ are formally adjoint to each other. 

Consider the first Schur complement of the matrix $\wt\cA_0$, which is formally given by the second order differential expression
\begin{align*}
\tS(\lambda) & :=\tau_{\tA}-\lambda-\tau_{\tB}(\tau_{\tD}-\lambda)^{-1}\tau_{\tC}
              =-\ttpi(\cdot,\lambda) \ds \frac{\d^2}{\d x^2}+\ttrho(\cdot,\lambda)\,\ii\ds\frac{\d }{\d x}+\tkappa(\cdot,\lambda)   
\end{align*}
for $\la \in \CC \setminus \wt d((0,1])$ with coefficients
\begin{align}
\label{tpirho}
\ttpi(\cdot,\lambda) &:= \ds \frac{w_2}{w_1}\tp-\ds \frac{|\tb|^2}{\td-\lambda}, \hspace{4mm}
\ttrho(\cdot,\lambda):=-\ds \frac{2\Imag(\tb\hspace{0.3mm} \ov{\,\tc\,})}{\td-\lambda}+
\ds \mbox{i}\frac{1}{w}\frac{\D}{\D x}\bigl(w\ttpi(\cdot,\lambda)\bigr),\\
\label{tka}
\tkappa(\cdot,\lambda) &:= \tq-\lambda-\ds \frac{|\tc|^2}{\td-\lambda}+\ds \frac{1}{w}\Bigl(w\frac{\ov {\,\tb\,}\tc}{\td-\lambda}\Bigr)'-\ds \frac{(\tp w_2')'}{w_1}.
\end{align}
Define 
\begin{equation}
\label{tDelta}
\tDelta(x):=\td(x)-\ds \frac{w_1(x)}{w_2(x)}\frac{|\tb(x)|^2}{\tp(x)}, \hspace{3mm} x\in (0,1].
\end{equation}\vspace{2mm}\newline
\textbf{Assumption ($\widetilde{\mbox{B}}$).} \ ($\widetilde{\mbox{B}}1$) The possibly improper limit $\td_0:=\ds \lim_{x\to 0+}\td(x)$ exists;\newline
($\widetilde{\mbox{B}}2$) there exist constants $\tbeta$, $\tgamma>0$ such that 
\begin{equation*}
\Bigl|\frac{\tb(x)}{x}\Bigr|\leq \tbeta(|\td(x)|+1), \hspace{2mm} |\tc(x)|\leq \tgamma(|\td(x)|+1) \hspace{3mm} \mbox{for}\hspace{1mm} \mbox{all}\hspace{1mm} x\in (0,1];
\end{equation*}
($\widetilde{\mbox{B}}3$a) for some (and hence all) $\lambda\in \RR\setminus \{\td_0\}$ there exists an $x_{\lambda}\in (0,1)$ such that 
\begin{equation*}
\frac{\ttpi(x,\lambda)}{x^2} \hspace{3mm} \mbox{is a bounded function (of $x$) on}\hspace{2mm} (0,x_{\lambda}];
\end{equation*}
\newline\noindent
$(\widetilde{\mbox{B}}3$b) for all $\lambda\in \RR\setminus \bigl(\hspace{0.1mm} \ov{\,\tDelta((0,1])\,}\cup \{\td_0\}\bigr)$ there exists an $x_{\lambda}\in (0,1)$ such that 
\begin{equation*}
\ds \frac{x^2}{\ttpi(x,\lambda)},\hspace{1mm} \ds \frac{\ttrho(x,\lambda)}{x},\hspace{1mm} \tkappa(x,\lambda)\hspace{3mm} \mbox{are bounded functions (of $x$) on} \hspace{2mm} (0,x_{\lambda}].
\end{equation*}
\newline
\textbf{Assumption ($\widetilde{\mbox{C}}$).}  ($\widetilde{\mbox{C}}1$) 
For $\lambda\!\in\!\RR\!\setminus\!\bigl(\hspace{0.1mm} \ov{\,\tDelta((0,1])\,}\cup \{\td_0\}\bigr)$ the following limits exist 
and are~finite:
\begin{equation}
\label{mainass-t}
\ttrho_0(\lambda) \defeq \lim_{x\to 0+}\frac{x\ttrho(x,\lambda)}{\ttpi(x,\lambda)}, \hspace{3mm} \tkappa_0(\lambda) \defeq \lim_{x\to 0+}\frac{x^2\tkappa(x,\lambda)}{\ttpi(x,\lambda)}; 
\end{equation}
\newline
($\widetilde{\mbox{C}}2$) \ the limit 
$\ds \lim_{x\to 0+} \frac{x^2w''(x)}{w(x)}$ exists and is finite.

\begin{remark}
Note that, as in Remark \ref{B3a}, it is sufficient to require in ($\widetilde{\mbox{B}}3$a) that the boundedness of the function $\frac{\ttpi(x,\lambda)}{x^2}$ holds for some $\la_0 \in \RR\setminus \{\td_0\}$ because, for any other $\la \in \RR\setminus \{\td_0\}$,
\begin{equation*}
\ds \frac{\tpi(x,\lambda)}{x^2}-\frac{\tpi(x,\lambda_0)}{x^2}=\frac{\lambda_0-\lambda}{\bigl(\td(x)-\lambda_0\bigr)\bigl(\td(x)-\lambda\bigr)}\frac{|\tb(x)|^2}{x^2}, \quad x \in [\max\{x_\la,x_{\la_0}\},1].
\end{equation*}
\end{remark}

\begin{remark}
Assumptions ($\widetilde{\mbox{B}}3$a) and ($\widetilde{\mbox{C}}1$)  imply ($\widetilde{\mbox{B}}3$b) (compare Remark \ref{last!}).
\end{remark}


In the following transformation of the matrix differential operator $\wt \cA_0$ in \eqref{matrix01}, the function 
\begin{equation}
\label{W}
  W(t):= 1 + \e^{-t} \frac{w'(\e^{-t})}{w(\e^{-t})}, \quad t\in [0,\infty),
\end{equation}
plays a role. Our assumptions guarantee that $W$ and $W'$ have limits at $\infty$; more precisely:

\begin{lemma}
\label{rem-orif0}
If Assumptions {\rm ($\widetilde{\mbox{A}}$)}, {\rm ($\widetilde{\mbox{B}}$)}, and {\rm ($\widetilde{\mbox{C}}$)} hold, then
\begin{align}
\label{gronwall}
\lim_{x\to 0+} x \frac{\frac{\D}{\D x}\ttpi(x,\lambda)}{\ttpi(x,\lambda)} = 2
\end{align}
and hence
\begin{align}
\label{remw0}
&\lim_{t\to\infty} W(t) \mbox{ exists and is finite}, \quad \lim_{t \to \infty} W'(t) = 0, \\
\label{remw0a}
&\ds \Imag(\ttrho_0(\lambda))
= 1 + \lim_{t\to\infty} W(t).
\end{align}
\end{lemma}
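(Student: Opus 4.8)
The plan is to pass to the variable $t=-\ln x$, under which $x\frac{\d}{\d x}=-\frac{\d}{\d t}$ and $x\to0+$ corresponds to $t\to\infty$, and then to run a one-dimensional Gronwall argument of the same type as in the proof of Lemma~\ref{rem-orif}. First I would record the identity underlying \eqref{remw0a}. For real $\la\notin\td((0,1])$ the term $-2\Imag(\tb\,\ov{\,\tc\,})/(\td-\la)$ in \eqref{tpirho} is real and $\ttpi(\cdot,\la)$ is real, so the definition of $\ttrho$ gives $\Imag\ttrho(x,\la)=\frac{w'(x)}{w(x)}\ttpi(x,\la)+\ttpi'(x,\la)$. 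Dividing by $\ttpi$, multiplying by $x$ and using $x\,w'(x)/w(x)=W(t)-1$ (with $x=\e^{-t}$), this reads
\[
 \frac{x\,\Imag\ttrho(x,\la)}{\ttpi(x,\la)}=\bigl(W(t)-1\bigr)+x\,\frac{\ttpi'(x,\la)}{\ttpi(x,\la)}.
\]
By Assumption~($\widetilde{\mbox{C}}1$) the left-hand side tends to $\Imag(\ttrho_0(\la))$, so \eqref{gronwall} and \eqref{remw0a} both follow once each summand on the right is shown to converge.

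For \eqref{gronwall} I set $h(x):=\ttpi(x,\la)/x^2$; Assumptions~($\widetilde{\mbox{B}}3$a) and ($\widetilde{\mbox{B}}3$b) state precisely that $h$ and $1/h$ are bounded near $0$, so $\ttpi$ has constant sign there and $x\,\ttpi'/\ttpi=2+x\,h'/h$, i.e.\ in the variable $t$, $\dot h/h=2-x\,\ttpi'/\ttpi$. Thus \eqref{gronwall} is equivalent to $\dot h/h\to0$. Exactly as in Lemma~\ref{rem-orif}, if this limit exists and is positive, Gronwall's lemma forces $|h|$ to grow exponentially, contradicting the boundedness of $h$; if it is negative it contradicts the boundedness of $1/h$. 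Hence the limit, once known to exist, must be $0$, and it remains only to prove that $\lim_{t\to\infty}W(t)$ exists — equivalently, by the displayed identity, that $\lim_{t\to\infty}x\,\ttpi'/\ttpi$ exists.

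This existence is the heart of the matter. A direct computation from \eqref{W} gives the Riccati equation
\[
 W'(t)=\bigl(W(t)-1\bigr)^2-\bigl(W(t)-1\bigr)-\psi(t),\qquad
 \psi(t):=\frac{x^2w''(x)}{w(x)}\Big|_{x=\e^{-t}},
\]
and by Assumption~($\widetilde{\mbox{C}}2$) the forcing term $\psi$ has a finite limit $c$ as $t\to\infty$, so this is an asymptotically autonomous scalar equation. Writing $A:=W-1$, the field $A^2-A-c$ is positive for $|A|$ large; positivity for large negative $A$ keeps $A$ bounded below, while $A(t)\to+\infty$ is excluded because, by the displayed identity and Assumption~($\widetilde{\mbox{C}}1$), it would force $x\,\ttpi'/\ttpi\to-\infty$, hence $\dot h/h\to+\infty$ and $|h|\to\infty$, contrary to ($\widetilde{\mbox{B}}3$a). (If $A^2-A-c$ had no real zero it would be bounded below by a positive constant, again forcing $A\to+\infty$; so it has real zeros.) Since the field is positive beyond its largest zero, $A$ cannot oscillate up to $+\infty$, so $A$ is bounded; a bounded solution of this asymptotically autonomous scalar equation converges to a zero of $A^2-A-c$, because between consecutive zeros the limiting field has a fixed sign and forbids oscillation. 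Hence $\lim_{t\to\infty}W(t)$ exists and is finite.

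With $\lim_{t\to\infty}W(t)$ in hand, the Gronwall argument above yields \eqref{gronwall}, and the displayed identity then gives $\lim_{t\to\infty}(W(t)-1)=\Imag(\ttrho_0(\la))-2$, which is \eqref{remw0a}. Finally, $W$ has a finite limit and, by the Riccati equation, $W'$ also has a limit (its right-hand side converges); this limit must be $0$, since otherwise $W$ would be unbounded, proving the remaining assertion $\lim_{t\to\infty}W'(t)=0$ in \eqref{remw0}. The one genuine obstacle is the boundedness-and-convergence analysis of the Riccati equation above; the identity, the Gronwall step and the final assembly are routine.
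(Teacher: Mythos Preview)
Your proof is correct and in fact more careful than the paper's. Both arguments introduce $y(x)=\ttpi(x,\lambda)/x^2$, note that $y$ and $1/y$ are bounded by ($\widetilde{\mbox{B}}3$a,b), and use a Gronwall step as in Lemma~\ref{rem-orif} to force the limit in \eqref{gronwall} to equal $2$; both then read off \eqref{remw0a} from the identity $\Im\bigl(x\ttrho/\ttpi\bigr)=(W-1)+x\,\ttpi'/\ttpi$ and obtain $W'\to0$ from ($\widetilde{\mbox{C}}2$). The difference is that the paper simply asserts \eqref{gronwall} ``similarly as in the proof of Lemma~\ref{rem-orif}'', but in that lemma the \emph{existence} of the limit came for free from the identity $\Im(\rho/\pi)=\pi'/\pi$ together with Assumption~(C); here the extra $w'/w$ term in $\ttrho$ breaks that direct identity, and the paper does not spell out why $x\,\ttpi'/\ttpi$ (equivalently $W$) converges at all. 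You close this gap with the Riccati analysis $A'=A^2-A-\psi$, using ($\widetilde{\mbox{C}}2$) for the limit of $\psi$ and the combination of ($\widetilde{\mbox{C}}1$) with ($\widetilde{\mbox{B}}3$a) to exclude $A\to+\infty$; this is a genuine addition, and the argument is sound. The only step worth stating a touch more precisely is the passage from ``$A$ bounded'' to ``$A$ converges'' for an asymptotically autonomous scalar ODE, but your sketch (the limiting field $A^2-A-c$ has a fixed sign between consecutive zeros, so for large $t$ a bounded trajectory cannot repeatedly cross any level strictly between them) is standard and correct.
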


\begin{proof}
In order to prove \eqref{gronwall}, let $y(x):= \frac{\ttpi(x,\lambda)}{x^2}$, $x \in (0,1]$. By Assumptions ($\widetilde{\mbox{B}}3$a), ($\widetilde{\mbox{B}}3$b),
$y$ and $\frac 1y$ are bounded near $0$. Similarly as in the proof of Lemma~\ref{rem-orif}, using Gronwall's lemma, one can show that this implies
\[
  0 = \lim_{x\to 0+} x \frac{y'(x)}{y(x)} = \lim_{x\to 0+} \Big( x \frac{\frac{\D}{\D x}\ttpi(x,\lambda)}{\ttpi(x,\lambda)} -2 \Big).
\]
This, together with Assumption ($\widetilde{\mbox{C}}1$) and with
\[
 \ds \Imag(\ttrho_0(\lambda)) = \lim_{x\to 0+} \Big( x \frac{\frac{\D}{\D x}\ttpi(x,\lambda)}{\ttpi(x,\lambda)}  + x \frac{w'(x)}{w(x)} \Big),
\]
shows that $W$ has a finite limit at $\infty$ and that \eqref{remw0a} holds.
Further, it is not difficult to check that Assumption ($\widetilde{\mbox{C}}2$) implies that also $W'$ has a limit at $\infty$ and hence
$\lim_{t\to\infty} W'(t)=0$.
\end{proof}

\begin{theorem}
\label{essspec01}
Suppose that Assumptions $\textnormal{($\widetilde{\textnormal{A}}$)}$, $\textnormal{($\widetilde{\textnormal{B}}$)}$, and $\textnormal{($\widetilde{\textnormal{C}}$)}$  are satisfied.
Then the essential spectrum of every closed symmetric extension $\wt\cA$ of the operator $\wt\cA_0$ in $L^2((0,1),w)^2$ is given by 
\begin{equation*}
\sess(\wt\cA) \setminus \{\td_0\} = \big( \sessreg(\wt\cA)\cup\sesssing(\wt\cA) \big) \setminus \{\td_0\},
\end{equation*}
where
\begin{align*}
\sessreg(\wt\cA)&\!:=\!\ov{\,\tDelta((0,1])\,},\\
\sesssing(\wt\cA)&\!:=\!\left\{\lambda\in\RR\setminus (\ov{\,\tDelta((0,1])\,} \!\cup\! \{\td_0\} ): 
\Re \big( (\widetilde\rho_0(\lambda)-\ii)^2 \big) -  4\Re \big( \widetilde\kappa_0(\lambda)\big) \ge 0
\right\}
\end{align*}
with $\tDelta$ given by \eqref{tDelta} and $\widetilde\rho_0(\lambda)$ and $\widetilde\kappa_0(\lambda)$ defined as in \eqref{mainass-t}.
\end{theorem}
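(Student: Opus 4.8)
The plan is to reduce the statement to Theorem~\ref{ess_spec1} by means of the substitution $x=\e^{-t}$, which maps $(0,1]$ onto $[0,\infty)$ with $x\to0+$ corresponding to $t\to\infty$, combined with a unitary identification of the weighted space $L^2((0,1),w)$ with the unweighted space $L^2(0,\infty)$. Concretely, I would set $\phi(t):=\bigl(w(\e^{-t})\,\e^{-t}\bigr)^{1/2}$ and introduce the operator
\[
  U\colon L^2((0,1),w)\to L^2(0,\infty), \qquad (Uu)(t):=\phi(t)\,u(\e^{-t});
\]
the change of variables $x=\e^{-t}$ shows that $\|Uu\|_{L^2(0,\infty)}=\|u\|_{L^2((0,1),w)}$, so $U$ is unitary. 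A short computation gives $\phi'/\phi=-\tfrac12 W$ with $W$ as in \eqref{W}, so the logarithmic derivative of the weight factor is governed exactly by the function $W$ analysed in Lemma~\ref{rem-orif0}. Since $\mathcal U:=U\oplus U$ is unitary and $\wt\cA=\mathcal U^{-1}\cA\,\mathcal U$ ranges over all closed symmetric extensions of $\cA_0:=\mathcal U\wt\cA_0\mathcal U^{-1}$ as $\wt\cA$ ranges over all closed symmetric extensions of $\wt\cA_0$, it suffices to analyse $\cA$ and use $\sess(\wt\cA)=\sess(\cA)$.

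First I would show that $\cA_0=\mathcal U\wt\cA_0\mathcal U^{-1}$ is a matrix differential operator on $[0,\infty)$ of the form \eqref{A0}. Using $\frac{\d}{\d x}=-\e^{t}\frac{\d}{\d t}$ together with the conjugation by $\phi$, a direct computation produces explicit coefficients $p,q,b,c,d$; in particular $d(t)=\td(\e^{-t})$, so that $\dinf=\td_0$, and $|b(t)|=|\tb(\e^{-t})|\,\e^{t}=\bigl|\tb(x)/x\bigr|_{x=\e^{-t}}$, $p(t)=\e^{2t}\tfrac{w_2}{w_1}\tp(\e^{-t})$. The special form of $\tau_{\tA}$ with the weights $w_1,w_2$ is designed precisely so that the top-left block becomes $-\frac{\d}{\d t}p\frac{\d}{\d t}+q$ with real $q$, while the formal adjointness of $\tau_{\tB},\tau_{\tC}$ in $L^2((0,1),w)$ is preserved by $U$ and yields the required $B_0,C_0$ structure. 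From these formulae one reads off $\Delta(t)=\tDelta(\e^{-t})$, whence $\ov{\Delta([0,\infty))}=\ov{\,\tDelta((0,1])\,}$ and therefore $\sessreg(\cA)=\sessreg(\wt\cA)$.

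Next I would verify that Assumptions $(\widetilde{\mbox{A}})$--$(\widetilde{\mbox{C}})$ translate into Assumptions (A)--(C) for $p,q,b,c,d$. Most of this is routine bookkeeping: the relation $\tpi(t,\la)=\ttpi(\e^{-t},\la)/\e^{-2t}$ turns $(\widetilde{\mbox{B}}3\mathrm a)$, $(\widetilde{\mbox{B}}3\mathrm b)$ into (B3a), (B3b), and $(\widetilde{\mbox{B}}2)$ into (B2) via $|b(t)|=|\tb(x)/x|$, while the verification of Assumption (C) for $\cA_0$ is exactly the content of Lemma~\ref{rem-orif0}, which guarantees that $W$ and $W'$ have finite limits at $\infty$. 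With the assumptions in place I may apply Theorem~\ref{ess_spec1} to $\cA$, obtaining $\sess(\cA)\setminus\{\dinf\}=(\sessreg(\cA)\cup\sesssing(\cA))\setminus\{\dinf\}$ with the singular part characterised by the discriminant condition $(\trho/\tpi)\blinf^2-4(\tka/\tpi)\blinf\ge0$.

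It remains to rewrite this discriminant in terms of $\ttrho_0(\la)$ and $\tkappa_0(\la)$, and this is the crux of the proof. Tracking the first- and zeroth-order coefficients of the transformed Schur complement through the conjugation by $\phi$ (which injects the $W$-terms) and passing to the limit $t\to\infty$ by means of Lemma~\ref{rem-orif0}, the definitions \eqref{mainass-t}, and the identity $\Im\ttrho_0(\la)=1+\lim_{t\to\infty}W(t)$ from \eqref{remw0a}, I expect to obtain
\[
  \Big(\frac{\trho}{\tpi}\Big)\blinf=-\Re\ttrho_0(\la),\qquad
  \Big(\frac{\tka}{\tpi}\Big)\blinf=\tfrac14\bigl(\Im\ttrho_0(\la)-1\bigr)^2+\Re\tkappa_0(\la).
\]
Substituting these into the discriminant yields
\[
  \bigl(\Re\ttrho_0(\la)\bigr)^2-\bigl(\Im\ttrho_0(\la)-1\bigr)^2-4\Re\tkappa_0(\la)
  =\Re\bigl((\ttrho_0(\la)-\ii)^2\bigr)-4\Re\tkappa_0(\la),
\]
which is precisely the stated condition. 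The main obstacle is exactly this coefficient computation: the weight conjugation couples the leading coefficient $\ttpi$ into the lower-order coefficients through $W$ and $W'$, so one must check both that $(\trho/\tpi)\blinf$ and $(\tka/\tpi)\blinf$ are real (the analogue here of Lemma~\ref{rem-orif}, so that the quadratic really has a real discriminant) and that the various $W$-contributions recombine exactly into the shift $\ttrho_0(\la)\mapsto\ttrho_0(\la)-\ii$.
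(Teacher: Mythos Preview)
Your proposal is correct and follows essentially the same route as the paper: the same unitary $U$ (your $\phi$ is the paper's $\psi$), the same identification of the transformed coefficients and of $\Delta(t)=\tDelta(\e^{-t})$, the same appeal to Lemma~\ref{rem-orif0} for the $W$-limits, and the same final discriminant computation. Your formula $(\tka/\tpi)\blinf=\tfrac14(\Im\ttrho_0(\la)-1)^2+\Re\tkappa_0(\la)$ is exactly the simplified form of the paper's expression, and the resulting identity $(\Re\ttrho_0)^2-(\Im\ttrho_0-1)^2=\Re((\ttrho_0-\ii)^2)$ is the same endgame; the only small omission is that the boundedness of $W$ from Lemma~\ref{rem-orif0} is also needed to verify (B2) for $c$ (not just for (C)), since the transformed $c$ picks up a $W$-multiple of $\tb(x)/x$.
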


\vspace{3mm}

The idea of the proof is to transform the space $L^2((0,1),w)$ unitarily onto the space $L^2(0,\infty)$ so that the corresponding transformed operator matrix has the form required in Theorem~\ref{ess_spec1}. To this end, we need the following lemma.

\begin{lemma}
\label{0.2}
Suppose that $w\in C^2((0,1])$, $w>0$, 
and define $\psi\in C^2([0,\infty))$~as 
\begin{equation}
\label{psi}
\psi(t):=\sqrt{\e^{-t}w(\e^{-t})}, \quad t\in [0,\infty).
\end{equation}
Then the operator 
\begin{equation*}
U:L^2((0,1),w)\to L^2(0,\infty), \quad (Uu)(t):=\psi(t)u(\e^{-t}),
\end{equation*}
is unitary and, with $W$ defined as in \eqref{W},
\begin{align*}
U\dx U^{-1}
=& -\e^t\dt - \ds \frac 12 \e^t W(t),
\\
U \ds \frac{\d^2}{\d x^2}U^{-1}
= &\, \e^{2t}\ds \frac{\d^2}{\d t^2} + \e^{2t} \big( 1 + W(t) \big) \dt  + \frac 12 \e^{2t}  \Big( W(t) + \frac 12 W(t)^2 +W'(t) \Big).
\end{align*}
\end{lemma}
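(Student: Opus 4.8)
The plan is to derive everything from the substitution $x=\e^{-t}$: first that $U$ is unitary, then the two conjugation formulas, obtaining the second-order identity simply by squaring the first-order one. Throughout I would regard the operator identities as holding on a convenient core (e.g.\ $C_0^\infty$), which is justified because $w\in C^2((0,1])$ with $w>0$ forces $\psi\in C^2([0,\infty))$ and $\psi>0$, so all the differentiations below are legitimate.

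First I would check isometry. For $u\in L^2((0,1),w)$ the substitution $x=\e^{-t}$, $\d x=-\e^{-t}\d t$, together with $\psi(t)^2=\e^{-t}w(\e^{-t})$, gives
\[
  \|Uu\|_{L^2(0,\infty)}^2=\int_0^\infty \e^{-t}w(\e^{-t})\,|u(\e^{-t})|^2\,\d t
  =\int_0^1 w(x)\,|u(x)|^2\,\d x=\|u\|_{L^2((0,1),w)}^2 .
\]
Since $t\mapsto\e^{-t}$ maps $(0,\infty)$ bijectively onto $(0,1)$ and $\psi>0$, the map $U$ is onto with $(U^{-1}v)(x)=v(-\ln x)/\psi(-\ln x)$, so $U$ is unitary.

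Next I would compute $U\dx U^{-1}$ on a test function $v$. Writing $u=U^{-1}v$, differentiating $u(x)=v(-\ln x)/\psi(-\ln x)$ by the chain rule (using $\tfrac{\d}{\d x}(-\ln x)=-1/x$), then multiplying by $\psi(t)$ and evaluating at $x=\e^{-t}$ (so that $1/x=\e^t$ and $-\ln x=t$), I obtain
\[
  U\dx U^{-1}=-\e^t\dt+\e^t\,\frac{\psi'}{\psi}.
\]
The key step is then to identify $\psi'/\psi$. From $\ln\psi(t)=\tfrac12\bigl(-t+\ln w(\e^{-t})\bigr)$ one gets $\psi'/\psi=\tfrac12\bigl(-1+\tfrac{\d}{\d t}\ln w(\e^{-t})\bigr)$, and since $\tfrac{\d}{\d t}\ln w(\e^{-t})=-\e^{-t}w'(\e^{-t})/w(\e^{-t})=1-W(t)$ by the definition \eqref{W} of $W$, this yields $\psi'/\psi=-\tfrac12 W$. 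Substituting gives the first stated formula.

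Finally, because $U^{-1}U=I$, we have $U\tfrac{\d^2}{\d x^2}U^{-1}=\bigl(U\dx U^{-1}\bigr)^2$, so I would square $L:=-\e^t\dt-\tfrac12\e^t W$. Writing $L=aD+b$ with $a=-\e^t$, $b=-\tfrac12\e^t W$ and $D=\dt$, the composition gives $L^2=a^2D^2+(aa'+2ab)D+(ab'+b^2)$; inserting $a'=-\e^t$ and $b'=-\tfrac12\e^t(W+W')$ and collecting terms produces exactly the coefficients $\e^{2t}$, $\e^{2t}(1+W)$, and $\tfrac12\e^{2t}\bigl(W+\tfrac12 W^2+W'\bigr)$ claimed in the second formula. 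The argument is elementary calculus; the only places requiring care are the sign bookkeeping in the chain rule and the identification $\psi'/\psi=-\tfrac12 W$, after which both conjugation formulas follow mechanically.
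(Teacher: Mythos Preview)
Your proof is correct and follows essentially the same approach as the paper: the paper also verifies unitarity of $U$ via the substitution $x=\e^{-t}$, gives the explicit inverse $(U^{-1}\widetilde u)(x)=\widetilde u(-\log x)/\sqrt{xw(x)}$, and then obtains the second-order identity by writing $U\tfrac{\d^2}{\d x^2}U^{-1}=(U\tfrac{\d}{\d x}U^{-1})^2$. Your write-up simply makes the bookkeeping (in particular the identification $\psi'/\psi=-\tfrac12 W$) more explicit than the paper, which leaves the verification to the reader.
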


\begin{proof}
That $U$ is unitary is easily seen from the relations $w(x)=\frac{1}{x}\left(\psi\left(-\log x\right)\right)^2$, $x\in (0,1]$, and
\begin{equation*}
\ds \int_0^{\infty}|(Uu)(t)|^2\d t=\ds \int_0^1|\psi(-\log x)|^2|u(x)|^2\frac{1}{x}\d x=\ds \int_0^1w(x)|u(x)|^2 \d x.
\end{equation*}
The expressions for $U\ds\dx U^{-1}$ and $U\ds \frac{\d^2}{\d x^2}U^{-1} = \Big( U\dx U^{-1} \Big)^2$ are easy to verify since the inverse of $U$ is given by
\begin{align*}
\left(U^{-1}\widetilde{u}\right)(x)&
= \frac 1{\sqrt{xw(x)}}\widetilde{u}(-\log x), \quad x\in(0,1]. 
\qedhere
\end{align*}
\end{proof}

\begin{proof}[Proof of Theorem {\rm \ref{essspec01}}.]
Consider the block operator matrix
\[
  \cA_0 \defeq \matrix{cc}{U & 0 \\ 0 & U} \matrix{cc}{\tA_0 & \tB_0 \\ \tC_0 & \tD_0}\matrix{cc}{U^{-1} & 0 \\ 0 & U^{-1}}
\]
acting in the Hilbert space $L^2(0,\infty)\oplus L^2(0,\infty)$.
Because $U$ is unitary, the essential spectra of arbitrary closed symmetric extensions $\wt\cA$ of $\wt\cA_0$ and $\cA$ of $\cA_0$ coincide,
\[
  \sess(\wt\cA) = \sess(\cA).
\]
It is not difficult to see that $\cA_0$ is of the form \eqref{A0} with coefficient functions
\begin{equation*}
\begin{array}{ll}
p(t)=\e^{2t}\ds \frac{w_2(\e^{-t})}{w_1(\e^{-t})}\tp(\e^{-t}), \quad q(t)=\tq(\e^{-t}), \ &b(t)=-\e^t\tb(\e^{-t}),\\
\vspace{2mm}
c(t)=\tc(\e^{-t})-\ds \frac{1}{2}\Bigl(\e^t+\frac{w'(\e^{-t})}{w(\e^{-t})}\Bigr)\tb(\e^{-t}), \ &d(t)=\td(\e^{-t}),
\vspace{-3mm}
\end{array}
\end{equation*}
and that, hence, the function $\Delta$ defined in \eqref{Delta} has the form
\begin{equation}
\Delta(t)=\td(\e^{-t})-\ds\frac{w_1(\e^{-t})}{w_2(\e^{-t})}\frac{|\tb(\e^{-t})|^2}{\tp(\e^{-t})}, \quad t\in [0,\infty).
\end{equation}
By Assumption ($\widetilde{\mbox{B}}1$) we have $\dinf=\lim_{t\to\infty}d(t)=\lim_{x\to 0+}\td(x)=\td_0$ so that Assumption (B1) holds. 
Assumption ($\widetilde{\mbox{B}}2$) together with Lemma~\ref{rem-orif0} guarantees that $b$ and $c$ satisfy Assumption (B2).
In order to check Assumptions (B3a), (B3b), and (C), let $\widetilde S(\lambda)$, $\la \in \RR \setminus \overline{\widetilde d((0,1])}$, be the first Schur complement of~$\widetilde \cA_0$. 
Then the first Schur complement $S(\la)$ of $\cA_0$ is given by
\begin{equation*}
\ds S(\lambda)=U\tS(\lambda)U^{-1}=-\ttpi(\e^{-t},\lambda)\,U\ds \frac{\d^2}{\d x^2}U^{-1}\!\!+\ii\,\ttrho(\e^{-t},\lambda)U\dx U^{-1}\!\!+\ds \tkappa(\e^{-t},\lambda).
\end{equation*}
Lemma \ref{0.2} with $w=w_1w_2$ yields that
\begin{equation*}
S(\lambda)=-\pi(t,\lambda)\frac{\d^2}{\d t^2}+\rho(t,\lambda)\,\mbox{i}\dt+{\kappa}(t,\lambda)
\end{equation*}
where, for $t\in[0,\infty)$,
\begin{equation}\label{pirhokappa}
\begin{array}{rl}
\pi(t,\lambda)\!=&
\hspace{-2.5mm}\ds \ \,\e^{2t}\ttpi(\e^{-t}\!,\lambda),\\[1mm]
\rho(t,\lambda)\!=&\hspace{-2.5mm}\ds -\e^t\ttrho(\e^{-t}\!,\lambda) + 
\ii\bigl(1+W(t)\bigr)\tpi(t,\lambda), \\
\kappa(t,\lambda)\!=&\hspace{-2.5mm}\ds \tkappa(\e^{-t}\!\!,\lambda)
\!-\!\frac{\ii}{2} \e^t W(t) \ttrho(\e^{-t}\!\!,\lambda) 
\ds \!-\! \frac 12 \Big( W(t) \!+\! \frac 12 W(t)^2 \!\!+\! W'(t) \!\Big)\tpi(t,\lambda). \hspace*{-4mm}
\end{array}
\end{equation}
It is easy to see that the functions given by the formulas in \eqref{pirhokappa} satisfy Assumptions (B3a), (B3b) due to Assumptions ($\widetilde{\mbox{B}}3$a), ($\widetilde{\mbox{B}}3$b), ($\widetilde{\mbox{C}}1$), ($\widetilde{\mbox{C}}2$), and Lemma~\ref{rem-orif0}. 
Moreover, Assumptions ($\widetilde{\mbox{C}1}$), ($\widetilde{\mbox{C}}2$), and Lemma~\ref{rem-orif0} imply that the~limits 
\begin{align*}
\Big( \dfrac{\trho(\cdot,\la)}{\tpi(\cdot,\la)} \Big)\blinf, \qquad \Big( \dfrac{\tka(\cdot,\la)}{\tpi(\cdot,\la)} \Big)\blinf 
\end{align*}
exist and are finite; note that  because of Lemma \ref{rem-orif} (see also \eqref{remw0}) they are real. Therefore Theorem~\ref{ess_spec1} applies to the transformed block operator matrix~$\cA_0$. 

The regular part of the essential spectrum can be read off immediately, using the relation
$\tDelta(x)=\Delta(-\log x)$, $x\in (0,1]$:
\[
  \sessreg(\wt\cA)=\sessreg(\cA) = \overline{\Delta([0,\infty))} = \ov{\,\tDelta((0,1])}\,.
\]
In order to determine the singular part of the essential spectrum, we note~that, by \eqref{pirhokappa}, 
\begin{align*}
   &\Re \dfrac{\trho(t,\la)}{\tpi(t,\la)} = - \Re \Big( x \frac{\ttrho(x,\la)}{\ttpi(x,\la)} \Big), \quad 
   \Im \dfrac{\trho(t,\la)}{\tpi(t,\la)} = - \Im \Big( x \frac{\ttrho(x,\la)}{\ttpi(x,\la)} \Big) + 1 + W(t), \\
   &\Re \dfrac{\tkappa(t,\la)}{\tpi(t,\la)} =  x^2 \frac{\tkappa(x,\la)}{\ttpi(x,\la)} + \frac 12 W(t) \Im \Big( x \frac{\ttrho(x,\la)}{\ttpi(x,\la)} \Big)
   -\! \frac 12 \Big( W(t) \!+\! \frac 12 W(t)^2 \!\!+\! W'(t) \Big)
\end{align*}
for $t\in[0,\infty)$, $x=\e^{-t}$. By Lemma~\ref{rem-orif0}, we have $\lim_{t\to\infty} W(t) = \Im \widetilde \rho_0(\la) -1$, $\lim_{t\to\infty}W'(t)=0$, and hence, by \eqref{mainass-t},
\begin{align*}
  & \Re \Big( \dfrac{\trho(\cdot,\la)}{\tpi(\cdot,\la)} \Big)\blinf =  - \Re \widetilde\rho_0(\lambda), \quad
  \Im \Big( \dfrac{\trho(\cdot,\la)}{\tpi(\cdot,\la)} \Big)\blinf =  0, \\
  & \Re \Big( \dfrac{\tka(\cdot,\la)}{\tpi(\cdot,\la)} \Big)\blinf = \Re \widetilde\kappa_0(\lambda) + \frac 12 (\Im \widetilde \rho_0(\la) -1)\Im \widetilde\rho_0(\lambda)- \frac 14\big( (\Im \widetilde\rho_0(\lambda) )^2 -1\big).
\end{align*}
Therefore the condition \eqref{def-discr} for a point $\la$ to belong to $\sesssing(\cA)$ takes the form
\begin{align*}
0 & \le \Big( \dfrac{\trho(\cdot,\la)}{\tpi(\cdot,\la)} \Big)\blinf^2\!\!-4 \Big( \dfrac{\tka(\cdot,\la)}{\tpi(\cdot,\la)} \Big)\blinf 
= \Re \Big( \dfrac{\trho(\cdot,\la)}{\tpi(\cdot,\la)} \Big)\blinf^2\!\!-4 \Re \Big( \dfrac{\tka(\cdot,\la)}{\tpi(\cdot,\la)} \Big)\blinf  \\
& = \big( \Re \widetilde\rho_0(\lambda) \big)^2 - 4 \Re \widetilde\kappa_0(\lambda)
- \big( \Im \widetilde\rho_0(\lambda) \big)^2  + 2 \Im \widetilde\rho_0(\lambda)  - 1\\
&=  \ds \, \Re \big( (\widetilde\rho_0(\lambda)-\ii)^2 \big) -  4\Re  \widetilde\kappa_0(\lambda).
\qedhere
\end{align*}
\end{proof}

\vspace{1mm}

\begin{remark}
Singular matrix differential operators on $(0,1]$ were also considered
in the papers \cite{MR2790893}, \cite{MR2793253}, but by a different method. 
There it is shown how to calculate the essential spectrum by means of a transformation to a
canonical system, but no explicit characterization of the essential
spectrum was given. Moreover, the assumptions used therein are not
comparable to ours, as the following examples~show.

If we consider an operator matrix of the form \eqref{matrix01} with $w_1\equiv w_2\equiv 1$ and  coefficient functions
\begin{alignat*}{2}
\tb(x)& = x\log\frac{x}{\e}, \quad & \tc(x)&=-\log\frac{x}{\e}, \quad \td(x)=-\log x,\\
\tp(x)& = -x^2\log\frac{x}{\e}, \quad &  \tq(x)&=1-2\log x, 
\end{alignat*}
for $x\in (0,1]$, then  our assumptions ($\tilde{\mbox{A}}$), ($\tilde{\mbox{B}}$), ($\tilde{\mbox{C}}$) are all satisfied and the essential spectrum is $\bigl[-1, -\frac{9}{13}\bigr]$, but \cite[Assumption
(H)]{MR2793253}  is violated since
\begin{equation*}
\ds  
\lim_{x\to 0+}\Big|\frac{\widetilde p(x)\widetilde c(x)}{\widetilde p(x)(\lambda\!-\!\widetilde d(x))\!+\!|\widetilde b(x)|^2}\Big|^2 \!\!=\!
\lim_{x\to 0+}\!\frac{(\log x\!-\!1)^2}{(\lambda+1)^2}\!=+\infty \ \ \textnormal{for every}\ \lambda\in\RR\setminus\{-1\}. 
\end{equation*}
On the other hand, for $\tb(x)=x(\log^2x-1)$, $\tc(x)=0$ for $x\in(0,1]$ and the other coefficient functions as above, \cite[Assumption
(H)]{MR2793253} is satisfied while ours are not since there does not exist a constant $\beta$ for which assumption ($\wt{\mbox{B}}2$)~holds.
\end{remark}

\section{Examples}
\label{sec5}

In this section we show how our method simplifies the calculation of the essential spectrum of singular matrix differential operators 
by means of three examples which were studied before using different methods. 

We begin with a problem which was studied in \cite{DG79} (see also \cite{MR580568}) and in \cite{MR2074778} as a model in the linear stability theory of plasmas confined to a toroidal region in~$\RR^3$. Eliminating one variable by means of the $S^1$-symmetry, one arrives at the following second order system of differential equations in the radial variable $x$ and the angular variable $\varphi$ on the cross section of the torus. 


\begin{example}
Let $\omega >0$ be a constant and $\Omega \defeq (0,1) \times (0,2\pi) \subset \RR^2$. Denote the variables
in $\Omega$ by $x, \varphi$ and the respective derivatives by $\partial_1, \partial_2$.
We introduce the operators $\mathfrak A_0, \mathfrak B_0, \mathfrak C_0$, and
$\mathfrak D_0$ in $L^2(\Omega, \frac 1x)$ by
\begin{align*}
  \mathfrak A_0 &:= - \frac 1{\omega^2} x \partial_1 \frac 1x \partial_1 - \frac 1{\omega^2} \partial_2^2, \qquad
  \mathfrak B_0 :=  - \frac 1\omega x \partial_1 \frac 1{x^2} \partial_2, \\
  \mathfrak C_0 &:= - \frac 1\omega \frac 1x \partial_2 \partial_1, \qquad \hspace*{16.5mm}
  \mathfrak D_0 := - \frac{1+x^2}{x^2}\partial_2^2
\end{align*}
with domains
\[
  \cD(\mathfrak A_0) = \cD(\mathfrak B_0) = \cD(\mathfrak C_0) = \cD(\mathfrak D_0) := C_{0,\pi}^\infty(\ov{\Omega})
\]
where
\begin{align*}
  C_{0,\pi}^\infty(\ov{\Omega})
  \defeq \{ f \in C^\infty(\ov{\Omega}) : {\rm supp} f \subset [\eps, 1-\eps] \times   [0,2\pi) \text{ for some } \eps \in (0,1/2),& \\
  \partial_2^j f(\cdot,0) = \partial_2^j f(\cdot,2 \pi), \,j\in\N_0 \}.\!\!\!&
\end{align*}
It was shown in \cite[Section~5]{MR2074778} that the operator $L_0$
in $L^2(\Omega, \frac 1x)^{2}$, 
given~by
\[
  L_0 \defeq \matrix{cc}{\mathfrak A_0 & \mathfrak B_0 \\ \mathfrak C_0 & \mathfrak D_0}, \quad \cD(L_0)
      := C_{0,\pi}^\infty(\ov{\Omega}) \oplus C_{0,\pi}^\infty(\ov{\Omega}),
\]
is symmetric and semi-bounded;  hence it possesses a Friedrichs extension $L_F$. Using Fourier series decomposition with respect
to the second variable $\varphi$, the operator $L_0$ becomes a direct~sum
\[
  L_0 = \!\!\!\!\sum_{m=-\infty}^\infty \!\!\!\! \wt \cA_{0,m}|_{C_0^\infty((0,1))^2}, \ \
  \wt \cA_{0,m} \!\!\defeq\!\! \matrix{cc}{\!\!- \ds x \dx   \frac 1{\omega^2} \frac 1x \dx + \frac {m^2}{\omega^2}
                           & \!- \ds \ii\frac m\omega x \dx \frac 1{x^2} \!\\[1.5ex] - \ds \ii\frac m\omega \frac 1x \dx & \ds m^2\frac{1+x^2}{x^2}}\!\!, \, m\in\Z.
\]
Here the block operator matrices $\wt \cA_{0,m}$ in the weighted Hilbert space product $L^2((0,1),\frac 1x)^{2}$, 
defined on $\cD(\wt \cA_{0,m})=C_0^2((0,1))\oplus C_0^1((0,1))$, have the form \eqref{matrix01} with
\begin{alignat*}{5}
\wt p(x) & = \frac 1{\omega^2} \frac 1x, \quad & \wt q(x) &= \frac {m^2}{\omega^2}, \quad & \wt b(x) &= - \ii \frac m\omega \frac 1x, \quad
& \wt c(x) &= 0, \quad & \wt d(x) &= m^2 \frac{1+x^2}{x^2}, \\
w_1(x) & = \frac 1x, & w_2(x)&=1, &  w(x)&=\frac 1x & & & &
\end{alignat*}
for $x\in(0,1]$; note that $C_0^\infty((0,1))^2$ is a core for $\wt\cA_{0,m}$ (compare Remark \ref{Cinfcore}). 
Moreover, $\wt \cA_{0,m}$ is semi-bounded and hence has a Friedrichs extension $\cA_{F,m}$.
The coefficients \eqref{tpirho}, \eqref{tka} of the Schur complement are given by
\begin{align*}
  \ttpi(x,\lambda) &= \frac {x^2}{\omega^2} \frac{m^2-\lambda}{m^2+x^2(m^2-\lambda)}, \quad 
  \ttrho(x,\lambda) = \mbox{i} \frac{\ttpi(x,\lambda)}{x} \frac{m^2-x^2(m^2-\lambda)}{m^2+x^2(m^2-\lambda)}, \\
  \tkappa(x,\lambda) &= \frac{m^2}{\omega^2} - \lambda
\end{align*}
for $x\in(0,1]$. It is easy to see  that the limits
\[
  \wt d_0 = + \infty, \quad \ttrho_0(\lambda) = \mbox{i}, \quad \tkappa_0(\lambda) = m^2 \frac{m^2 - \lambda \omega^2}{m^2-\lambda} 
\]
exist and all assumptions of Theorem \ref{essspec01} are satisfied. Now
\[
  \tDelta(x) = m^2\Bigl(1+\frac{1}{x^2}\Bigr)-\frac{1}{x} \omega^2x \frac{m^2}{\omega^2x^2}=m^2, \quad x\in(0,1],
\]
which shows that $\sessreg(\cA_{F,m}) = \{m^2\}$. 
Furthermore, $\la$ satisfies the condition $\Re \big( (\widetilde\rho_0(\lambda)-\ii)^2 \big) -  4\Re \big( \widetilde\kappa_0(\lambda)\big) \ge 0$ in Theorem \ref{essspec01} 
if and only if $\la$ lies between $m^2$ and~$\frac{m^2}{\omega^2}$. 
Hence Theorem \ref{essspec01} implies~that 
\begin{align*}
  \sess(\cA_{F,m}) &
                  =  \Big[ \min\Big\{ m^2, \frac{m^2}{\omega^2} \Big\}, \max\Big\{ m^2, \frac{m^2}{\omega^2} \Big\} \Big].
\end{align*}
So, if e.g.\ $0<\omega\le 1$, then the essential spectrum of $L_F$ exhibits a band structure,
\[
  \sess(L_F) = \bigcup_{m=-\infty}^\infty \Big[m^2, \frac{m^2}{\omega^2}\Big] = \bigcup_{m\in\N_0} \Big[m^2, \frac{m^2}{\omega^2}\Big].
\]
\end{example}

Next we consider an example whose essential spectrum was studied in a series of papers, see \cite{MR2388939} and the references therein, and which was also treated 
in \cite[Example~1 and Remark~4.2]{MR2793253} by transforming it to a canonical system.

\begin{example}
\label{ex7.2}
Let $\rho, m \in C^{2}([0,1],\RR)$, $\beta \in C^{2}([0,1],\CC)$ and $\phi \in C([0,1],\RR)$, and assume that $\rho(x) > 0$
for $x\in[0,1]$ and $m(0)\ne0$. We consider the block operator matrix
\[
  \wt\cA_0 = \begin{pmatrix} -\dx \rho \dx + \phi & \ds\dx \frac{\beta}{x} \\[2ex]
  \ds-\frac{\ov{\beta}}{x} \dx & \ds\frac{m}{x^2} \end{pmatrix}
\]
with domain $C_0^{2}((0,1)) \oplus C_0^{1}((0,1))$ 
in the Hilbert space $L^2(0,1)^{2}$. Here $w_1 = w_2 = w = 1$, $\td_0=+\infty$,~and 
\begin{align*}
  \ttpi(x,\lambda) &= \rho(x) - \frac{|\beta(x)|^2}{m(x)-\lambda x^2}, \quad \tkappa(x,\lambda) = \phi(x) - \lambda,\quad
  \ttrho(x,\lambda) = \mbox{i}\frac{\D}{\D x}\ttpi(x,\lambda)
\end{align*}
for $x\in(0,1]$.  By elementary arguments it can be shown that, under the above assumptions on the coefficients, the two conditions
\begin{equation}
\label{De0}
    (\rho m-|\beta|^2)(0)=0,  \quad  
    \lim_{x\to 0+}\frac {(\rho m - |\beta|^2)'(x)}{x} \ \mbox{ exists and is finite,}
\end{equation}
are equivalent to Assumptions ($\widetilde{\mbox{A}}$), ($\widetilde{\mbox{B}}$), and ($\widetilde{\mbox{C}}$). To see that they are sufficient, we use 
L'H\^opital's rule which yields the existence of the limits
\begin{align}
\label{quasi1}
  \lim_{x\to 0+}\frac {(\rho m - |\beta|^2)(x)}{x^2}, \quad 
  \lim_{x\to 0+} x\frac {(\rho m - |\beta|^2 - \la x^2 \rho)'(x)}{(\rho m - |\beta|^2 - \la x^2 \rho)(x)} = 2.
\end{align}
In particular, the first condition in \eqref{quasi1} implies that $\overline{\tDelta((0,1])}$ is bounded and
\begin{equation*}
\tDelta_0 \defeq \lim_{x\to 0+}\tDelta(x) = \lim_{x\to 0+} \frac 1{x^2} \Big( m(x) - \frac{|\beta(x)|^2}{\rho(x)} \Big) \ \mbox{ exists and is finite}.
\end{equation*}
Together with the smoothness assumptions on the coefficients and the conditions $\rho(0)>0$, $m(0) \ne 0$, it is easy to see that Assumptions $(\wt{\rm  B}2)$, 
$({\rm \wt B3a})$, and  $({\rm \wt B3b})$ are satisfied and that $({\rm \wt C1})$ holds with
\begin{align*}
    \ttrho_0(\lambda) &\!=\! \mbox{i}\!\lim_{x\to 0+}\!\!\frac{x\frac{\D}{\D x}\ttpi(x,\lambda)}{\ttpi(x,\lambda)}
    = \mbox{i}\!\lim_{x\to 0+}\! \!\Big( \!x\frac {(\rho m \!-\! |\beta|^2 \!-\! \la x^2 \rho)'(x)}{(\rho m \!-\! |\beta|^2 \!-\! \la x^2 \rho)(x)} 
    - x \frac{m'(x)\!-\!2\la x}{m(x)\!-\!\la x^2} \Big)
    \!=\!2\mbox{i},\\
    \tkappa_0(\lambda) &\!=\!\lim_{x\to 0+}\frac{(m(x)-\lambda x^2)(\phi(x)-\lambda)}{\rho(x)(\tDelta(x)-\lambda)}=\frac{m(0)(\phi(0)-\lambda)}{\rho(0)(\tDelta_0-\lambda)}.
\end{align*}
To see that the conditions \eqref{De0} are also necessary, we note that $({\rm \wt B3a})$, $({\rm \wt B3b})$, and $m(0)\ne 0$ imply that $(\rho m - |\beta|^2)(x) = { \rm O}(x^2)$,  $(\rho m - |\beta|^2)'(x) = { \rm O}(x)$ for $x\to 0+$. In particular, the first condition in \eqref{De0} holds and $\overline{\tDelta((0,1])}$ is bounded. Hence we can choose $\la \in \RR \setminus \overline{\tDelta((0,1])}$, $\la \ne \phi(0)$. Then the existence and finiteness of the limit $\kappa_0(\la)$ in $({\rm \wt C1})$ implies that the limit 
\[
 \lim_{x\to 0+} \frac{\ttpi(x,\la)}{x^2} = \frac{\phi(0)-\la}{\wt \kappa_0(\la)} \ne 0
\]
exists. Together with $({\rm \wt C1})$ it follows that the limit 
\[
 \lim_{x\to 0+} \ii  \frac{(\rho m - |\beta|^2 - \la x^2 \rho)'(x)}{x} = \ttrho_0(\lambda) \lim_{x\to 0+} \frac{\ttpi(x,\la)}{x^2} 
\]
exists and is finite, and hence the second condition in \eqref{De0} holds.

Altogether, if \eqref{De0} holds, we can apply Theorem \ref{essspec01} to calculate the singular part of the essential spectrum of 
any closed symmetric extension $\wt\cA$ of the  operator $\wt\cA_0$. Observing that
$\td_0=+\infty$, we obtain that, for $\la \notin\ov{\Delta([0,\infty))}$,
\begin{align*}
\la \in \sesssing(\wt\cA) 
\, \iff \, \Re \big( (\widetilde\rho_0(\lambda)\!-\!\ii)^2 \big) -  4\Re \big( \widetilde\kappa_0(\lambda)\big) \ge 0
\, \iff \, 1+4\tkappa_0(\lambda) \le 0;
\end{align*}
since $\rho(0)>0$, the first condition in \eqref{De0} implies that $m(0) >0$ and hence 
$\la \in \sesssing(\wt\cA)$ if and only if $\la$ lies between the two points
\[
  \tDelta_0 \quad \text{and} \quad \frac{4m(0)\phi(0)+\rho(0) \tDelta_0}{4m(0)+\rho(0)}. 
\]
Therefore the essential spectrum of every closed symmetric extension $\wt\cA$ of the  operator $\wt\cA_0$ in $L^2(0,1)^{2}$ is given~by
\begin{align*}
  \sess(\wt\cA) =& \ov{ \Big\{ \frac 1{x^2} \Big( m(x) - \frac{|\beta(x)|^2}{\rho(x)} \Big) : x\in (0,1] \Big\}} \\
              & \cup \!\Big[ \!\min \!\Big\{ \tDelta_0, \!\frac{4m(0)\phi(0)\!+\!\rho(0)\tDelta_0}{4m(0)\!+\!\rho(0)}\Big\}\!,
                          \max \Big\{ \tDelta_0, \!\frac{4m(0)\phi(0)\!+\!\rho(0)\tDelta_0}{4m(0)\!+\!\rho(0)}\Big\} \!\Big];
\end{align*}
note that, in fact, $\sess(\wt \cA)$ is just one interval since the end-point $\tDelta_0$ of the second interval lies in the first interval.
\end{example}

\begin{remark}
This result agrees with the results in \cite[Example~1, Case~3, and Remark~4.2]{MR2793253}, where it was already noted that
the description of the essential spectrum in \cite{MR2388939} and earlier papers 
is only valid if $\phi(0)=0$. 
Note that under the stronger assumptions $\phi \in C^2([0,1])$ imposed in \cite[(1.2)]{MR2388939}, the conditions 
\eqref{De0} are equivalent to the assumptions in \cite[(1.4)]{MR2388939}.
\end{remark}

Finally, the essential spectrum of the next example was studied  in \cite{MR2169702}.

\begin{example}
Let $\gamma, d_0 \in C^{2}([0,1],\RR)$, $\beta \in C^2([0,1],\CC)$,  $\gamma_1 \in C^1([0,1],\CC)$, and $\phi \in C([0,1],\RR)$
be such that $\gamma >0$ and $d_0 \ge 0$, $d_0(0)>0$.
We consider the block operator matrix
\[
  \wt \cA_0 \defeq \begin{pmatrix} \ds -\dx \frac{\gamma}{x} \dx x + \phi & \ds- \ii \dx \frac{\beta}{x} + \gamma_1 \\[2ex]
               - \ds \frac{\ov{\beta}}{x^2} \ii \dx x + \ov{\gamma_1} & \ds \frac{d_0}{x^2} \end{pmatrix}
\]
with domain $C_0^{2}((0,1)) \oplus C_0^{1}((0,1))$ 
in the product of weighted Hilbert spaces $L^2((0,1),x)^{2}$. Here we have $w_1(x)=1$, $w_2(x)=w(x)=x$ for $x\in[0,1]$, $c(x)= -\ii \frac{\ov{\beta(x)}}{x^2} + \ov{\gamma_1(x)}$ for $x\in (0,1]$,  $\td_0=+\infty$, and
\begin{align*}
  \ttpi(x,\lambda) =&\ \gamma(x)-\frac{|\beta(x)|^2}{d_0(x)-\lambda x^2}, \\
  \ttrho(x,\lambda)=&\ \frac{2x\Real(\beta(x)\ov{\gamma_1(x)})}{d_0(x)-\lambda x^2}+\mbox{i}\frac{\D}{\D x}\ttpi(x,\lambda)+\mbox{i}\frac{\ttpi(x,\lambda)}{x},  \\
  \tkappa(x,\lambda)=&\ \phi(x)-\lambda+\mbox{i}x\frac{\D}{\D x}\Bigl(\frac{\beta(x)\ov{\gamma_1(x)}}{d_0(x)-\lambda x^2}\Bigr)-\frac{x^2|\gamma_1(x)|^2}{d_0(x)-\lambda x^2}+\frac{\ttpi(x,\lambda)}{x^2}\\
                     &+\mbox{i}\frac{2\Real(\beta(x)\ov{\gamma_1(x)})}{d_0(x)-\lambda x^2}-\frac{1}{x}\frac{\D}{\D x}\ttpi(x,\lambda)
\end{align*}
for $x\in(0,1]$. In a similar way as in Example \ref{ex7.2}, one can show that the conditions 
\begin{equation}\label{De02}
(\gamma d_0 - |\beta|^2)(0)=0,  \quad  
    \lim_{x\to 0+}\frac {(\gamma d_0 - |\beta|^2)'(x)}{x} \ \mbox{ exists and is finite}    
\end{equation}
(compare \cite[(A2), (A3)]{MR2169702}) are equivalent to Assumptions (A), (B), and~(C).
Together with L'H\^opital's rule, we conclude that the limits 
\begin{align*}
& \tDelta_0 \defeq \lim_{x\to 0+}  \tDelta(x) = \lim_{x\to 0+} \frac 1{x^2} \Big( d_0(x) - \frac{|\beta(x)|^2}{\gamma(x)} \Big),
\\
&  \lim_{x\to 0+} \frac{\frac{\D}{\D x}\ttpi(x,\lambda)}{2x} =
\lim_{x\to 0} \frac{\ttpi(x,\lambda)}{x^2}=\lim_{x\to 0}\frac{\gamma(x)(\tDelta(x)-\lambda)}{d_0(x)-\lambda x^2}=\frac{\gamma(0)(\tDelta_0-\lambda)}{d_0(0)}
\end{align*}
exist and are finite and that 
\begin{align*}
\ttrho_0(\lambda) &=\frac{2\Re \bigl( \beta(0) \ov{\gamma_1(0)} \bigr) }{\gamma(0)(\tDelta_0-\lambda)}+3\mbox{i},\\
\tkappa_0(\lambda) &=\frac{d_0(0) \bigl(\phi(0)-\lambda\bigr) }{\gamma(0)(\tDelta_0-\lambda)}+\mbox{i}\frac{2\Real\bigl(\beta(0)\ov{\gamma_1(0)} \bigr) }{\gamma(0)(\tDelta_0-\lambda)}-1.
\end{align*}
Altogether, if \eqref{De0} holds, we can apply Theorem \ref{essspec01} to calculate the singular part of the essential spectrum of any closed symmetric extension $\wt \cA$ of the  operator $\wt \cA_0$. Observing that $\td_0=+\infty$, we have, for $\la\notin\ov{\Delta([0,\infty))}$,
\begin{align}
\nonumber
  \la \in \sesssing(\wt \cA) \ 
  &\iff \ \Re \big( (\widetilde\rho_0(\lambda)\!-\!\ii)^2 \big) -  4\Re \big( \widetilde\kappa_0(\lambda)\big) \ge 0 \\
\label{really-last}  
  & \iff \ \Big(\frac{\Re\bigl(\beta(0)\ov{\gamma_1(0)}\bigr)}{|\beta(0)|}\Big)^2\!
  - (\tDelta_0 - \la )\bigl(\phi(0)-\la\bigr) \ge 0;
\end{align}
here, to prove the equivalence \eqref{really-last}, we have multiplied the first inequality with $\frac{\gamma(0) (\tDelta_0-\la)^2}{4d_0(0)}$ ($>0$ for $\la\in \RR \setminus \sessreg(\cA)= \RR \setminus \overline{\tDelta((0,1])}$)
and used that $d_0(0) \gamma(0) \!=\! |\beta(0)|^2$ by \eqref{De02}. It is not difficult to see that \eqref{really-last} is equivalent \vspace{-2mm} to
\[
  \la \in [\la_-,\la_+], \quad
  \la_\pm \defeq \frac{\phi(0)+\tDelta_0}2 \pm \sqrt{ \Big( \frac{\phi(0)-\tDelta_0}2 \Big)^2
             + \Big(\frac{\Re \bigl( \beta(0) \ov{\gamma_1(0)}\bigr) }{|\beta(0)|}\Big)^2}.
\]
Hence we obtain that, for every closed symmetric extension $\wt \cA$ of the  operator $\wt \cA_0$ in $L^2((0,1),x)^2$,
\[
  \sess(\wt \cA) = \ov{ \Big\{ \frac 1{x^2} \Big( d_0(x) - \frac{|\beta(x)|^2}{\gamma(x)} \Big):
  x\in (0,1] \Big\} } \cup [\la_-,\la_+].
\]
\end{example}

\begin{remark}
This result coincides with \cite[Theorem~4.9]{MR2169702}. Since there semi-bounded operator matrices are considered and hence quadratic forms can be used, 
only the weaker assumptions $\gamma$, $d_0 \in C^1([0,1],\RR)$, $\beta \in C^1([0,1],\CC)$ are needed.
\end{remark}

\begin{remark}
Another example of a semi-bounded operator matrix \eqref{A0} on $\RR$ (with $p\equiv 1$) was considered in \cite{MR1885445}. 
Their assumptions are not comparable to ours; e.g.\ \cite[(4.2)]{MR1885445}
implies our assumption (B2), and \cite[(4.3)]{MR1885445} requires that $q$ has limit $0$ at $\infty$, while 
we do not suppose $q$ to have a limit at $\infty$ at all; on the other hand, we assume $d$ 
to have a (possibly improper) limit at $\infty$, while in \cite{MR1885445} the corresponding limit need not exist.
\end{remark}


\section{Application to a problem from astrophysics}
\label{sec6}

In this section we apply our main result on the essential spectrum to a spectral problem arising in the stability analysis of spherically symmetric stellar equilibrium models which was studied in \cite{MR1347113}.

For polytropic equilibrium models with constant adiabatic index near the centre and near the boundary of the star, the unperturbed part of the reduced spheroidal operator is a matrix differential operator in the radial variable $r$ having the form 
\[
  \wt\cA_0 = \begin{pmatrix} -\dr p_1 \dr + q_1 &  \ds \dr p_2+q_2 \\[2ex]
  \ds -p_2\dr+q_2                               &   p_3 \end{pmatrix}.
\]
We consider $\wt\cA_0$ in the Hilbert space $L^2(0,R)^{2}$ on the domain $C_0^{2}((0,R)) \oplus C_0^{1}((0,R))$, where $R>0$ is the radius of the star.
Using the notation of \cite{MR1347113}, 
the coefficient functions $p_1, p_2,$ $p_3, q_2 \in C^1((0,R], \RR)$, and $q_1 \in C((0,R], \RR)$ are given~by
\begin{alignat*}{2}
p_1:=& \frac{\Gamma_1 p}{\varrho}, \quad &q_1&:=\frac{1}{r\varrho}\cdot\bigl((4-3\Gamma_1)p\bigr)'+\frac{1}{r^2\sqrt{\varrho}}\cdot\Bigl(\frac{\Gamma_1p}{\varrho}\cdot (r^2\sqrt{\varrho})'\Bigr)', \\
p_2:=& c\frac{\Gamma_1p}{r\varrho}, \quad &q_2&:=c\frac{\Gamma_1p}{r\varrho}\cdot\Bigl(A-\frac{1}{2}\cdot\frac{(r^2\varrho)'}{r^2\varrho}\Bigr), \quad p_3:=c^2\frac{\Gamma_1p}{r^2\varrho}.
\end{alignat*}
Here the adiabatic index $\Gamma_1\!\in\! C^{2}([0,R],\RR)$ is positive on $[0,R]$ and assumed to be constant near the centre and the boundary of the star and thus $\Gamma_1(0), \Gamma_1(R) >0$ and $\Gamma_1'(0)=\Gamma_1'(R)=0$.
The  pressure 
$p\in C^2((0,R],\RR)$ and mass density $\varrho\in C^3((0,R],\RR)$ are positive, and 
$c\ge \sqrt 3$ is a constant. 
For a polytropic equilibrium model, $p$ and $\varrho$ are supposed to have the~form
\begin{align}
\label{prho}
p(r)=p_c\cdot(\theta_n(r))^{n+1},\quad \varrho(r)=\varrho_c\cdot(\theta_n(r))^n, \quad r\in (0,R),
\end{align}
where $p_c$, $\varrho_c >0$ are the constant central pressure and central density, respectively, of the unperturbed star.
The polytropic index $n$ is an element of the open interval $(0, 5)$,  and  $\theta=\theta_n \in C^2((0,R],\RR)$ 
is the Lane--Emden function of index $n$ which is uniformly positive and satisfies the non-linear differential equation 
\begin{align}
\label{L-M1}
\theta''_n(r)+\frac{2}{r}\theta'_n(r)=-\frac 1{\alpha_n^2} (\theta_n(r))^n, \quad r\in (0,\infty),
\end{align}  
where $\alpha_n\in\RR_+$ is the Lane--Emden unit length and $R=R_n >1$ is the first zero of $\theta=\theta_n$ (see e.g.\ \cite{MR0092663}). 
In particular, $\theta_n$ satisfies 
\begin{alignat}{2}
\label{L-M2}
& \ds \lim_{r\to 0+}\theta_n(r)=1, \quad && \lim_{r\to 0+}r^2\theta'_n(r)=0, \\
\label{L-M2R}
& \ds \ \theta_n > 0 \mbox{ on } [0,R), \quad && \lim_{r\to R-} \theta_n(r)=0. \qquad  
\end{alignat}
Note that, in the physically most interesting cases, it is assumed that 
$n\ge 1$ (see  \cite[Section 5, p.\ 47]{MR1347113}).
Finally, $A \in C^1((0,R),\RR)$ is connected with the buoyancy forces within the star and given as the component of $\rho^{-1} {\rm grad\,} \rho - (\Gamma_1 p)^{-1} {\rm grad\,} p$ in the radial direction.

From the above assumptions it follows that the coefficients are singular at $r=0$, and \vspace{-2mm} because of
\[
   \lim_{r\to R-} p_1(r) = \Gamma_1(R) \frac{p_c}{\varrho_c} \lim_{r\to R-} \theta_n(r) = 0
\]
also at $r=R$. Since $R>1$, Glazman's decomposition principle \eqref{glaz} yields that
\[
 \sess(\wt\cA) = \sess(\wt\cA_{(0,1)}) \cup \sess(\wt\cA_{(1,R)})
\]
where $\wt\cA_{(0,1)}$ and $\wt\cA_{(1,R)}$ are the minimal closed symmetric operators obtained when restricting $\wt\cA_0$ to the intervals $(0,1)$ and $(1,R)$, respectively.

In order to determine the regular part of the essential spectrum of arbitrary closed symmetric extensions $\wt\cA^0$ of $\wt\cA_{(0,1)}$ 
and $\wt\cA^R$ of $\wt\cA_{(1,R)}$,
we can use Theorem~\ref{essspec01}. To this end, we first note that $w_1 \equiv w_2 \equiv w \equiv 1$ and the function $\tDelta$ defined in \eqref{tDelta} is given by
\[
 \tDelta(r) = c^2\frac{\Gamma_1(r)p(r)}{r^2\varrho(r)}-\ds \Big( c\frac{\Gamma_1(r)p(r)}{r\varrho(r)} \Big)^2 
 \frac{\varrho(r)}{\Gamma_1(r) p(r)} = 0, \quad r\in(0,R).
\]
Hence, by Theorem \ref{essspec01},  the regular part of the essential spectrum is 
\[
  \sessreg(\wt\cA^0) = \sessreg(\wt\cA^R)  = \{0\}.
\]

In order to determine the singular part of the essential spectrum of $\wt\cA^0$, we note that, by \eqref{prho}, \eqref{L-M2} and since $\Gamma_1(0)>0$,  
\[
  d_0=\lim_{r\to 0+} p_3(r) = \lim_{r\to 0+} c^2\frac{\Gamma_1(r)p(r)}{r^2\varrho(r)} = 
  c^2 \frac{p_c}{\varrho_c} \Gamma_1(0) \lim_{r\to 0+}  \frac{\theta_n(r)}{r^2} = +\infty.
\]  
This shows that Assumption ($\widetilde{\mbox{B}}1$) holds. From the special form of the coefficients $p_2$, $p_3$ (corresponding to $b$, $d$, respectively, in Section \ref{sec4}), it is obvious that  ($\widetilde{\mbox{B}}2$) is satisfied.

Moreover, after some calculations one finds that the functions $\ttpi(\cdot,\la)$, $\ttrho(\cdot,\la)$, and $\tkappa(\cdot,\la)$ defined in
\eqref{tpirho}, \eqref{tka} are of the form 
\begin{align*}
\ttpi(r,\lambda)& = \frac{\lambda \Gamma_1(r) p(r)r^2}{\lambda r^2\varrho(r)-c^2\Gamma_1(r)p(r)},\qquad
\ttrho(r,\lambda) = \ii\ttpi(r,\lambda)\cdot \Bigl(\frac{2}{r}+\frac{\theta'_n(r)}{\theta_n(r)}+\Delta_1(r)\Bigr),\\
\tkappa(r,\lambda)& = \ttpi(r,\lambda)\Bigl(\frac{n^2}{4}\cdot\frac{\theta_n^{\prime\,2}(r)}{\theta_n^{2}(r)}+\frac{\Delta_2(r)}{r^2\theta^2_n(r)}+\bigl(1-\ii \frac{n+1}{\Gamma_1(r)}\bigr)\frac{c^2}{r^2}\Bigr).
\end{align*}
with certain functions $\Delta_1\in C([0,1],\CC)$, $\Delta_2\in C^1([0,1],\CC)$ such that 
\begin{equation}
\label{limit1}
  \lim_{r\to 0+} r\Delta_1(r) = 0, \quad 
  \lim_{r\to 0+} \Delta_2(r) = \Delta_2(0) =0
\end{equation}
(compare \cite[(4.2.3) and the following remark]{MR1347113}).
It is clear from (\ref{L-M1}) and the first relation in (\ref{L-M2}) that 
\begin{align}
\label{limit2}
\ds \lim_{r\to 0+}\bigl(r^2\theta''_n(r)+2r\theta'_n(r)\bigr)=-\frac{1}{\alpha^2_n}\lim_{r\to 0+}\bigl(r^2\theta_n(r)^n\bigr)=0.
\end{align}
Therefore L'H\^opital's rule and again the first relation in (\ref{L-M2}) imply that
\begin{align}
\label{limit3}
\ds 0=\lim_{r\to 0+}\frac{r^2\theta'_n(r)}{r}=\lim_{r\to 0+}r\theta'_n(r), 
\quad \lim_{r\to 0+}r \frac{\theta'_n(r)}{\theta_n(r)}=0.
\end{align}
It is not difficult to see that  \eqref{limit1}, \eqref{limit2}, \eqref{limit3} ensure that
the functions $\ttpi(r,\lambda)$, $\ttrho(r,\lambda)$, and $\tkappa(r,\lambda)$ above satisfy Assumptions ($\widetilde{\mbox{B}}3$a), 
($\widetilde{\mbox{B}}3$b), and ($\widetilde{\mbox{C}}1$) with
\begin{align*}
\ttrho_0(\lambda)=2\ii, \quad \tkappa_0(\lambda)=c^2\Bigl(1-\ii\frac{n+1}{\Gamma_1(0)}\Bigr).
\end{align*} 
Assumption ($\widetilde{\mbox{C}}2$) is trivially satisfied since $w_1=w_2=1$ and therefore $w''=(w_1w_2)''=0$.

Thus all assumptions of Theorem \ref{essspec01} are satisfied for the operator $\cA_{(0,1)}$. It follows that, 
for every closed symmetric extension $\cA^0$ of \vspace{-1mm} $\cA_{(0,1)}$, 
\[
\lambda\in \sesssing(\wt\cA^0) \ \iff \ 0 
\le  \Re \big( (\widetilde\rho_0(\lambda)-\ii)^2 \big) -  4\Re \big( \widetilde\kappa_0(\lambda)\big) 
= -1 -4 c^2,
\]
which is impossible, and hence 
\[ 
\sesssing(\wt\cA^0)=\emptyset, \quad \sess(\wt\cA^0) = \{0\}.
\] 

In order to determine the essential spectrum $\sesssing(\wt\cA^R)$ caused by the singularity at the boundary $R$ of the star, more work is needed. We conjecture that it will turn out to be empty as well. However, our present method is not readily applicable since the coefficients of the Schur complement are not bounded at $R$ because the first derivative of the Lane--Emden function $\theta=\theta_n$ does not vanish at $R$.

%

\section{Appendix}

Here we prove Proposition \ref{ha} on the adjoint of the matrix differential operator $\cA_0$ defined in \eqref{A0}. To this end we first prove a lemma which may be of independent interest. 

\begin{lemma}\label{adjoint}
Let $T$ be a densely defined symmetric operator in a Hilbert space $\cH$ with scalar product $(\cdot,\cdot)$,
let $g_1$, \dots, $g_n \in\cH$ and set
\[
  \cD_0 \defeq \{u\in\cD(T)\colon  (u,g_j)=0, j=1,\dots,n\}.
\]
If $x\in\cH$ is such that 
\begin{equation}\label{501}
  \cD_0 \to \CC, \quad u \mapsto (Tu,x) \quad \mbox{is bounded},
\end{equation}
then $x\in\cD(T^*)$.
\end{lemma}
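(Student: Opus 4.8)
The plan is to prove the statement by showing directly that the linear functional
\[
  \phi\colon \cD(T)\to\CC, \qquad \phi(u)\defeq (Tu,x),
\]
is bounded with respect to the norm of $\cH$ on all of $\cD(T)$. Since $T$ is densely defined, such a bounded functional extends to $\cH$ and the Riesz representation theorem yields a $y\in\cH$ with $(Tu,x)=(u,y)$ for all $u\in\cD(T)$, which is precisely the assertion $x\in\cD(T^*)$. By hypothesis $\phi$ is already $\cH$-bounded on the subspace $\cD_0$, which is cut out of $\cD(T)$ by finitely many bounded linear conditions; the whole task is therefore to transfer this boundedness across a finite-dimensional complement.

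First I would reduce to the case that $g_1,\dots,g_n$ are linearly independent in $\cH$, since discarding redundant vectors leaves $\cD_0$ unchanged. The key preliminary step is then to show that the map
\[
  \Psi\defeq \bigl((\cdot,g_1),\dots,(\cdot,g_n)\bigr)\colon \cD(T)\to\CC^n
\]
is surjective. Here density of $\cD(T)$ enters: if the range of $\Psi|_{\cD(T)}$ were a proper subspace of $\CC^n$, there would exist a nonzero $a=(a_1,\dots,a_n)$ with $\sum_{j} a_j(u,g_j)=0$, that is $\bigl(u,\sum_{j}\overline{a_j}\,g_j\bigr)=0$, for all $u$ in the dense set $\cD(T)$, forcing $\sum_{j}\overline{a_j}\,g_j=0$ and contradicting linear independence.

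Given surjectivity, I can choose a biorthogonal system $f_1,\dots,f_n\in\cD(T)$ with $(f_i,g_j)=\delta_{ij}$ and define the finite-rank projection $Qu\defeq \sum_{j=1}^{n}(u,g_j)f_j$. This $Q$ is bounded in the $\cH$-norm (a finite sum of bounded functionals times fixed vectors), its range is finite-dimensional, and $u-Qu\in\cD_0$ for every $u\in\cD(T)$, with $\|u-Qu\|\le(1+\|Q\|)\|u\|$. Splitting $\phi(u)=(T(u-Qu),x)+(TQu,x)$, the first term is controlled by the hypothesis on $\cD_0$ together with the estimate for $\|u-Qu\|$, while the second equals $\sum_{j}(u,g_j)(Tf_j,x)$ and is bounded by $\bigl(\sum_{j}\|g_j\|\,|(Tf_j,x)|\bigr)\|u\|$, the numbers $(Tf_j,x)$ being fixed. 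Hence $\phi$ is $\cH$-bounded on $\cD(T)$ and $x\in\cD(T^*)$.

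The step I expect to demand the most care is the construction of the projection $Q$: establishing the surjectivity of $\Psi|_{\cD(T)}$ so that a biorthogonal system can be found \emph{inside} $\cD(T)$, and ensuring that $Q$ is bounded in the norm of $\cH$ rather than merely in the graph norm, since it is exactly this $\cH$-boundedness that lets the hypothesis on $\cD_0$ propagate to all of $\cD(T)$. It is worth noting that symmetry of $T$ plays no role in this argument; only the density of $\cD(T)$ is used.
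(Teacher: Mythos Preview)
Your proof is correct and follows essentially the same approach as the paper: decompose $\cD(T)=\cD_0\dot{+}\spn\{f_1,\dots,f_n\}$ for suitable $f_i\in\cD(T)$, and bound $(Tu,x)$ on each summand separately. The only difference is cosmetic: you obtain exact biorthogonality $(f_i,g_j)=\delta_{ij}$ via the surjectivity argument for $\Psi$, whereas the paper approximates a biorthogonal system in $\cH$ by elements of $\cD(T)$ to get a merely diagonally dominant (hence invertible) Gram matrix; your version is a bit cleaner, and your remark that symmetry of $T$ is never used is correct.
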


\begin{proof} 
We have to show that the mapping in \eqref{501} is bounded on $\cD(T)$ or, equivalently, continuous in $0$. 

Without loss of generality we may assume that $g_i\ne 0$, $i\in \{1,\dots,n\}$; zero elements $g_j$ may be omitted, and if all $g_j$ are zero there is nothing to prove.
First we show that there exist linearly independent $f_1,\dots, f_n \in \cD(T)$ with
$(f_i,g_i) \ne 0$, $i=1,\dots,n$. Indeed, there are $\wt f_1, \dots, \wt f_n \in \cH$ with $(\wt f_i,g_j) = \delta_{ij}$, $i,j=1,\dots,n$.
Since $\cD(T)$ is dense in $\cH$, we can choose 
$f_1,\dots, f_n \in \cD(T)$ such that $\|\wt f_i - f_i \| < 1 / (n \, \max_{j=1,\dots,n} \|g_j\| )$, $i=1,\dots,n$.
Then $|(f_i,g_i)| > 1 - 1/n$ and $|(f_i,g_j)| < 1/n$, $i,j=1,\dots,n$, $i\ne j$. Hence the matrix $\phi_n:=\big( (f_i,g_j) \big)_{i,j=1}^n$ is strictly diagonally dominant, thus invertible, and so $f_1,\dots, f_n$ are linearly independent.

Now let $(u_k)_{k=1}^\infty \subset \cD(T)$ with $u_k\to 0$, $k\to\infty$. Since  
$\cD(T)$ can be decomposed as $\cD(T) = \cD_0 \,\dot{+}\, \spn\{f_1,\dots,f_n\}$, 
there exist sequences $(u_k^0)_{k=1}^\infty \subset \cD_0$ and $(\alpha_k^1)_{k=1}^\infty, \dots, (\alpha_k^n)_{k=1}^\infty \subset \CC$
with 
\[
 u_k = u_k^0 + \alpha_k^1 f_1 + \dots + \alpha_k^n f_n, \quad k\in \N.
\]
Taking the scalar product with $g_1,\dots,g_n$ and noting that the matrix $\phi_n=\big( (f_i,g_j) \big)_{i,j=1}^n$ is invertible (see above), we find that
\[
 \big( \alpha_k^j \big)_{j=1}^n = \phi_n^{-1} \big( (u_k, g_j) \big)_{j=1}^n \to 0, \quad k\to \infty,
\]
since $u_k\to 0$, $k\to\infty$.  Then, if $x\in \cH$, we have
\[
  (Tu_k, x) = (T u_k^0, x) + \alpha_k^1 (T f_1, x) + \dots + \alpha_k^n (Tf_n,x), \quad k\in \N.
\]
The first term on the right hand side tends to $0$ by assumption since $u_k^0 \in \cD_0$ and all other terms tend to $0$ since $\alpha_k^j \to 0$, $k\to\infty$, for $j=1,\dots,n$. Thus  $(Tu_k, x) \to 0$, $k\to \infty$, as required.
\end{proof}

\begin{proof}[Proof of Proposition~{\rm \ref{ha}}]
The symmetry of $\cA_0$ is easy to check, and so is the inclusion ``$\supset$'' in \eqref{domA0adj}: If $y$ is in the set on the right-hand side of \eqref{domA0adj} and $f\in\cD(\cA_0)$, then it is easily seen that $(\cA_0 f,y)=(f,w)$
with $w$ equal to the right-hand side of \eqref{A0adj}; hence $y\in\cD(\cA_0^*)$.

In order to show the inclusion ``$\subset$'' in \eqref{domA0adj} and the equality \eqref{A0adj},
let \linebreak $y=\binom{y_1}{y_2}\in\cD(\cA_0^*)$
with $w = \binom{w_1}{w_2} \defeq \cA_0^*y \,\in L^2(0,\infty)^2$.  Then
\[
  (\cA_0f,y) = (f,\cA_0^*y), \quad f=\binom{f_1}{f_2}\in\cD(\cA_0)= C_0^2((0,\infty))\oplus C_0^1((0,\infty)),
\]
or, equivalently,
\begin{equation}\label{505}
\begin{aligned}
  &\bigl(-(pf_1')'\!+\!qf_1\!-\!(\ov{b} f_2)'\!+\!\ov{c}f_2,\,y_1\bigr) 
  \!+\! \bigl(bf_1'\!+\!cf_1+df_2,\,y_2\bigr)
  \!=\! (f_1,w_1) \!+\! (f_2,w_2) \hspace{-6mm}
\end{aligned}
\end{equation}
for all $f_1 \in C_0^2((0,\infty))$, $f_2 \in C_0^1((0,\infty))$.
In the following, let $t_0>0$ be arbitrary  and consider $f_1$, $f_2$ with compact support contained in $(0,t_0)$, i.e.\ $f_1\in C_0^2((0,t_0))$, $f_2 \in C_0^1((0,t_0))$.
In particular, when $f_2=0$, \eqref{505} yields that
\begin{equation}\label{503}
  \bigl(-(pf_1')'+qf_1,y_1\bigr) + \bigl(bf_1'+cf_1,y_2\bigr) = (f_1,w_1), \quad f_1 \in C_0^{2}((0,t_0)).
\end{equation}
Let $T$ be the operator in $L^2(0,t_0)$ defined by
\begin{align*}
  \cD(T) &:= C_0^1((0,t_0)), \quad   T v :=  v',
\end{align*}
which is injective with inverse $T^{-1}$ bounded on $\ran T = \spn \{1\}^\perp$. 
By Assumption (A), we have $p \in C^1([0,t_0])$, $\frac 1p \in C^1([0,t_0]) \subset L^2(0,t_0)$ since $p>0$ on $[0,\infty)$. Thus
\[
 \cD_0:= \{ pf_1': f_1 \in C_0^2((0,t_0))\} = \Big\{ h \in C^1_0((0,t_0)) = \cD( \ii T): h \perp \frac 1p \Big\} \subset \ran (pT)
\] 
and the operator $p T$ has an inverse $(pT)^{-1}=T^{-1} \frac 1p$ which is bounded on~$\ran (pT)$.
Assumption (A) also ensures that the multiplication operators by $q$, $\frac 1p$, $b$, and $c$ are bounded in $L^2(0,t_0)$.
Thus, from \eqref{503} we conclude that the mapping
\begin{equation}\label{504}
  \varphi \mapsto \bigl(\varphi',y_1\bigr) = \bigl(q (p T)^{-1} \varphi,y_1\bigr)
  + \Big( \tfrac bp \varphi+c(pT)^{-1} \varphi,y_2\Big) - \bigl((pT)^{-1} \varphi,w_1\bigr)
\end{equation}
is bounded on $\cD_0$. Now Lemma \ref{adjoint} applied to the symmetric operator $\ii T$ with $g=\frac 1p \in L^2(0,t_0)$ yields that $y_1 \in \cD((\ii T)^*)=\{v\in L^2(0,t_0): v,v'\in {\rm{AC_{loc}}}([0,t_0))\}$; in particular, $y_1$ is absolutely continuous on $(0,t_0)$.

This allows us to conclude from \eqref{505} that
\begin{equation}\label{507}
\begin{aligned}
  \bigl(pf_1'\!\!+\!\ov{b}f_2,y_1'\bigr) \!+\! \bigl(qf_1\!+\!\ov{c}f_2,y_1\bigr)
  \!+\! \bigl(bf_1'\!\!+\!cf_1\!+\!df_2,y_2\bigr) \!=\! \bigl(f_1,w_1\bigr) \!+\! \bigl(f_2,w_2\bigr)
\end{aligned}
\end{equation}
for all $f_1 \in C_0^2((0,t_0))$, $f_2 \in C_0^1((0,t_0))$.
In particular, again setting $f_2=0$, we find that
\[
  \bigl(f_1',\,py_1'+\ov{b}y_2\bigr) = - \bigl(f_1,\,qy_1+\ov{c}y_2\bigr) + \bigl(f_1,w_1\bigr), \quad f_1\in C_0^{2}((0,t_0)).
\]
Since $C_0^{2}((0,t_0))$ is a core for $T$ and the right hand side is bounded in $f_1$, it follows that 
$py_1'+\ov{b}y_2 \in \cD(T^*)$; in particular,  $py_1'+\ov{b}y_2$ is absolutely continuous on $(0,t_0)$, and 
\[
  -\bigl(f_1,(py_1'+\ov{b}y_2)'\bigr) + \bigl(f_1,\,qy_1+\ov{c}y_2\bigr)
  = \bigl(f_1,w_1\bigr), \quad f_1\in C_0^{2}((0,t_0)).
\]
Because $C_0^{2}((0,t_0))$ is dense in $L^2(0,t_0)$, this shows that
\[
  w_1 = -(py_1'+\ov{b}y_2)' + qy_1 + \ov{c}y_2 \quad \mbox{ on } (0,t_0).
\]
If we set $f_1=0$ in \eqref{507}, we obtain
\[
  (f_2,by_1') + (f_2,cy_1) + (f_2,dy_2) = (f_2,w_2), \quad f_2\in C_0^{1}((0,t_0)),
\]
which implies that
\[
  w_2 = by_1' + cy_1 + dy_2 \quad \mbox{ on } (0,t_0).
\]
Since $t_0>0$ was arbitrary and $w_1,w_2\in L^2(0,\infty)$ by assumption, it follows that
$y$ belongs to the set on the right-hand side of \eqref{domA0adj} and $\cA_0^*y$ is
equal to the right-hand side of \eqref{A0adj}.

It remains to be proved that  the deficiency indices of $\cA_0$ are $\le 2$.
Let $\lambda\in\CC\setminus\RR$ and $y=\binom{y_1}{y_2}\in\ker(\cA_0^*-\lambda)$, i.e.\
$\binom{y_1}{y_2}\in\cD(\cA_0^*)$ such that
\begin{align}
\label{i}
  -\bigl(py_1'+\ov{b}y_2\bigr)'+qy_1+\ov{c}y_2 &= \lambda y_1, \\
\label{ii}
  by_1'+cy_1+dy_2 &= \lambda y_2.
\end{align}
Solving \eqref{ii} for $y_2$ 
and multiplying by $\ov{b}$, we conclude that  
\begin{equation}
\label{last}
  \ov{b} y_2 = -  \frac{|b|^2}{d-\la} y_1' - \frac{\ov{b}c}{d-\la} y_1
\end{equation}
and hence 
\[
  \pi(\cdot,\la) y_1'= \left( p - \frac{|b|^2}{d-\la} \right) y_1' = ( py_1'+\ov{b} y_2 ) + \frac{\ov{b}c}{d-\la} y_1.
\]
This shows that $\pi(\cdot,\la) y_1' \in {\rm AC_{loc}}([0,\infty))$ since  $y_1$, $py_1'+\ov{b} y_2 \in {\rm AC_{loc}}([0,\infty))$ because 
$\binom{y_1}{y_2}\in\cD(\cA_0^*)$ and $\frac{\ov{b}c}{d-\la} \in C^1([0,\infty))$ because $\la\in\CC\setminus\RR$. 
Plugging \eqref{last} into \eqref{i}, we obtain that $y_1$ is a solution of the second order differential equation $S(\lambda)y_1=0$ where $S(\la)$ is the Schur complement given by \eqref{S}. 
By \cite[Theorem~III.10.1]{MR89b:47001} (see Remark \ref{schurdef}) there are at most two linearly independent functions $y_1$ that satisfy $S(\lambda)y_1=0$.
Since $y_2$ is uniquely determined by $y_1$, it follows that $\dim\ker(\cA_0^*-\lambda)\le2$.  
\end{proof}

{\small

\bibliographystyle{plain}
\bibliography{glasgowref}

}

\end{document}